\documentclass[twoside,11pt]{article}

\usepackage{amsthm}
\usepackage{jmlr2e}

\usepackage{amsmath}

\usepackage{xspace}
\usepackage{dsfont}
\usepackage[cal=boondoxo]{mathalfa}

\usepackage{booktabs}

\usepackage{cleveref}

\newtheorem{assumption}{Assumption}

\def\argmin{\mathop{\rm arg \; min}\limits}\def\argmax{\mathop{\rm arg \; max}\limits}

\newcommand{\E}{{\mathbb E}}
\newcommand{\R}{{\mathbb R}}

\newcommand{\N}{{\mathbb N}}

\renewcommand{\P}{{\mathbb P}}
\newcommand{\Id}{{\mathrm{Id}}}
\newcommand{\indic}{{\mathds{1}}}
\newcommand{\bX}{{\mathbf{X}}}
\newcommand{\bY}{{\mathbf{Y}}}
\newcommand{\bZ}{{\mathbf{Z}}}
\newcommand{\bx}{{\mathbf{x}}}
\newcommand{\by}{{\mathbf{y}}}

\newcommand{\JJ}{\ensuremath{\mathcal J}}
\newcommand{\hJJ}{\ensuremath{\widehat{\mathcal J}}}
\newcommand{\tJJ}{\ensuremath{\widetilde{\mathcal J}}}
\newcommand{\LL}{\ensuremath{\mathcal L}}

\newcommand{\TT}{\ensuremath{\mathcal T}}

\newcommand{\Sc}{\ensuremath{\mathcal S}}
\newcommand{\hSc}{\ensuremath{\widehat{\mathcal S}}}
\newcommand{\tSc}{\ensuremath{\widetilde{\mathcal S}}}

\newcommand{\ff}{\ensuremath{\mathcal f}}
\newcommand{\hff}{\ensuremath{\widehat{\mathcal f}}}
\newcommand{\tff}{\ensuremath{\widetilde{\mathcal f}}}
\newcommand{\RR}{\ensuremath{\mathcal R}}
\newcommand{\hphi}{\ensuremath{\widehat{\phi}}}
\newcommand{\tphi}{\ensuremath{\widetilde{\phi}}}
\newcommand{\htau}{\ensuremath{\widehat{\tau}}}
\newcommand{\Ac}{\ensuremath{\mathcal A}}
\newcommand{\Bc}{\ensuremath{\mathcal B}}

\newcommand{\Pc}{\ensuremath{\mathcal P}}

\newcommand{\MM}{\ensuremath{\mathcal M}}
\newcommand{\g}{\ensuremath{\mathcal{g}}}

\def\argmin{\mathop{\rm arg \; min}\limits}\newcommand{\supp}{\mathop{\mathrm{supp}}}
\DeclareMathOperator{\diam}{diam}
\newcommand{\Xsp}{\ensuremath{\mathds{X}}\xspace}
\newcommand{\Ysp}{\ensuremath{\mathds{Y}}\xspace}

\newcommand{\Vsp}{\ensuremath{\mathds{V}}\xspace}
\newcommand{\Msp}{\ensuremath{\mathcal{M}}\xspace}

\newcommand{\Cov}{\mathrm{Cov}}
\DeclareMathOperator*{\esssup}{ess\,sup}

\usepackage{lastpage}
\jmlrheading{25}{2024}{1-\pageref{LastPage}}{9/23; Revised 8/24}{8/24}{23-1149}{Paul Escande}

\ShortHeadings{On the Concentration of the Minimizers of Empirical Risks}{Paul Escande}
\firstpageno{1}

\begin{document}

\title{On the Concentration of the Minimizers of Empirical Risks}

\author{\name Paul Escande \email paul.escande@math.univ-toulouse.fr \\
       \addr Institut de Mathématiques de Toulouse\\
       UMR 5219, Université de Toulouse, CNRS\\
       UPS, F-31062 Toulouse Cedex 9, France}

\editor{Joseph Salmon}

\maketitle

\begin{abstract}
Obtaining guarantees on the convergence of the minimizers of empirical risks to the ones of the true risk is a fundamental matter in statistical learning. 
Instead of deriving guarantees on the usual estimation error, the goal of this paper is to provide concentration inequalities on the distance between the sets of minimizers of the risks for a broad spectrum of estimation problems.
In particular, the risks are defined on metric spaces through probability measures that are also supported on metric spaces. A particular attention will therefore be given to include unbounded spaces and non-convex cost functions that might also be unbounded.
This work identifies a set of high-level assumptions allowing to describe a regime that seems to govern the concentration in many estimation problems, where the empirical minimizers are stable. This stability can then be leveraged to prove parametric concentration rates in probability and in expectation.
The assumptions are verified, and the bounds showcased, on a selection of estimation problems such as barycenters on metric space with positive or negative curvature, subspaces of covariance matrices, regression problems and entropic-Wasserstein barycenters. \end{abstract}

\begin{keywords}
  empirical risk minimization, statistical learning, concentration inequalities, metric spaces, local H\"older-type error bound
\end{keywords}

\section{Introduction}

Let $(\Xsp,\rho, \mu)$ be a metric probability space\footnote{A measurable space $\Xsp$ whose Borel $\sigma$-algebra is induced by $\rho$ and is endowed with a probability measure $\mu$.} and let $(\MM,\vartheta)$ be another metric space.
Many estimation problems in statistics or in machine learning can be regarded as solving the following generic optimization problem
\begin{equation} \label{eq:opt_J}
  \begin{aligned}
    \phi_* \in & \argmin_{\phi \in \MM} \JJ(\phi) = \E_{X \sim \mu}[l(\phi,X)] + \RR(\phi),
  \end{aligned}
\end{equation}
where $l : \MM \times \Xsp \to \R$ is a cost function and $\RR : \MM \to \R \cup \{ +\infty \}$ is an optional regularization function promoting the a priori structure of the solution. The function $\JJ : \MM \to \R$ is oftentimes referred to as the risk or the objective function. 
For convenience, the data fidelity term $\ff : \MM \to \R$ will be defined by $\ff(\phi) = \E_{X \sim \mu}[l(\phi,X)]$.

In practice, the measure $\mu$ is rarely known thus hindering the knowledge of $\JJ$. However, samples drawn from $\mu$ are usually accessible.
Let $\bX = (X_i)_{i=1}^n \in \Xsp^n$ be such samples that will further be assumed to be independent. 
The main idea of the Empirical Risk Minimization (ERM) \citep{vapnik1991principles} is to use these samples to build $\hff$, an estimator of $\ff$, as $\hff(\phi) = \frac{1}{n} \sum_{i=1}^n l(\phi,X_i)$ and to solve 
\begin{equation}  \label{eq:opt_hatJ}
\widehat{\phi} \in \argmin_{\phi \in \MM} \hJJ(\phi) = \hff(\phi) + \RR(\phi),
\end{equation}
in place of \eqref{eq:opt_J}.

Understanding when the estimator $\hphi$ is legit is a fundamental matter in statistical learning. 
This work will be particularly interested in proving high probability bounds of kind
\begin{equation} \label{eq:PAC_framework}
  \P\left( \vartheta(\argmin \hJJ ; \argmin \JJ) < g(\delta,n) \right) \geq 1-\delta, \quad \text{for } \delta \in [0,1],
\end{equation}
where $\vartheta(A ; B)$ will be a suitable distance between sets of $\Msp$ and $g : [0,1] \times \N \to \R_+$ going to zero as $n$ goes to infinity for all fixed $\delta$.
This question is very similar to the Probably Approximately Correct (PAC) framework \citep{valiant1984theory} with the important distinction that it proves bound on the estimation error $\JJ(\hphi) - \inf \JJ$, also known as the excess risk, instead of a metric on the minimizers.
More specifically, this work attempts to identify a set of assumptions from which optimal concentration rates of kind \eqref{eq:PAC_framework} can be derived for a broad class of estimation problems. In essence, it is proposed here to understand the underlying mechanisms responsible for the concentration guarantees of many estimation problems.

These fundamental questions have a long history and have led to a remarkably rich theory.
In order to precisely contextualize this work, it seemed essential to dedicate a section to present its history and some important works (see Section \ref{sec:review_literature}).
Prior to this literature review, some occurrences of the estimation problems \eqref{eq:opt_hatJ} solved in place of \eqref{eq:opt_J} are stated with the aim of describing the taxonomy of problems encountered in applications.

\subsection{Applications} \label{sec:intro_applications}

Some occurrences of \eqref{eq:opt_hatJ} are briefly discussed in this section to describe the taxonomy of problems encountered in applications. 
They will be studied in detail in Section \ref{sec:applications}.

\begin{description}
\item[Barycenters:] Let $(\Msp,\vartheta) = (\Xsp,\rho)$ be a metric space, $l(\phi,x) = \rho(x,y)^2$ and $\mathcal{R}$ be identically zero. The problems \eqref{eq:opt_J} and \eqref{eq:opt_hatJ} become 
\begin{equation} \label{eq:frechet_means}
  \phi_* \in \argmin_{\phi \in \Xsp} \int_{\Xsp} \rho^2(\phi,x) d\mu(x), \quad \widehat{\phi} \in \argmin_{\phi \in \Xsp} \frac{1}{n} \sum_{i=1}^n \rho^2(\phi,x_i),
\end{equation}
which are the Fréchet means of $\mu$ and $\mu_n = \frac{1}{n} \sum_{i=1}^n\delta_{x_i}$ respectively \citep{frechet1948elements}.
The properties of the risks $\JJ$ and $\hJJ$ depend on the convexity and the curvature of the space $\Msp$. The risks are known to be strongly convex when the space $\Msp$ are of non-positive curvature in the sense of Alexandrov \citep[see][]{sturm2003probability}. Otherwise, the risks might not be convex since the distance function might exhibit some concavity properties \citep[see][]{ambrosio2008gradient}.

\item[Subspace estimation:]
Let $(\Xsp,\rho) = (\R^d, \|\cdot\|_2)$, $\Msp = \mathbb{S}^{d-1}$ endowed with the great circle distance and $\mathcal{R}$ be identically zero. 
The problem of estimating the leading direction of variability of $\Cov(\mu) = \E\left[X X^T \right]$, the covariance of a centered measure $\mu$, amounts to solving
\begin{equation*}
  \widehat{\phi} \in \argmax_{\phi \in \mathbb{S}^{d-1}} \langle \Cov(\mu_n) \phi,\phi \rangle, \quad \text{in place of } \quad \phi_* \in \argmax_{\phi \in \mathbb{S}^{d-1}} \langle \Cov(\mu) \phi,\phi \rangle,
\end{equation*}
which can be seen as an instance of \eqref{eq:opt_J} and \eqref{eq:opt_hatJ} with $l(\phi,x) = - \langle \phi, x \rangle^2$, and $\mathcal{R}$ being identically zero.
These problems are non-convex.

\item[Supervised learning -- LASSO:]

Let $\Ysp = \R^d$ and $\Vsp = \R$ be respectively a space of inputs and outputs.
Let $\Xsp = \Ysp \times \Vsp$ and $\Msp = \R^m$ endowed with the Euclidean distance. Let $\theta : \R^d \to \R^m$ be feature vectors, $l : \Xsp \to \R_+$ defined for all $(y,v) \in \Xsp$ as $l(\phi,(y,v)) = \frac{1}{2} (\langle \phi, \theta(y)\rangle  - v)^2$ and $\RR = \lambda \| \cdot \|_1$ for $\lambda > 0$.
The problems \eqref{eq:opt_J} and \eqref{eq:opt_hatJ} instantiate to
\begin{equation*}
  \phi_* \in \argmin_{\phi \in \R^m} \frac{1}{2} \int_{\Xsp} (\langle \phi, \theta(y) \rangle - v)^2 d\mu(y,v) + \lambda \| \phi \|_1,
\end{equation*}
and 
\begin{equation*}
  \widehat{\phi} \in \argmin_{\phi \in \R^m} \frac{1}{2n} \| V - \Theta \phi\|_2^2 + \lambda \| \phi \|_1,
\end{equation*}
with $V \in \R^n$ being the vector of outputs $V = (v_i)_{i=1}^n$ and $\Theta \in \R^{n \times m}$ is the design matrix with the vectors $(\theta(y_i)^T)_{i=1}^n$ as rows.
The risks $\JJ$ and $\hJJ$ are convex, but they may fail to be strongly convex if the matrix $\Cov(\theta \sharp \mu)$ (respectively with $\mu_n$) has a non-empty kernel. In this case, the risks posses a quadratic growth around the minimizers in the orthogonal of the kernel and a linear growth in the kernel driven by the regularization.

\end{description}

These three examples are simple yet important to understand the assumptions needed to describe the concentration phenomenon of most of optimization problems. In particular, the loss $l$ or the objectives $\JJ$ and $\hJJ$ cannot be assumed to be strongly convex or even convex. Similarly, the spaces $\Msp$ and $\Xsp$ at stake might be unbounded.

\subsection{Problem History and Related Works} \label{sec:review_literature}

Solving \eqref{eq:opt_hatJ} in place \eqref{eq:opt_J} is related to the principle of ERM or of Structural Risk Minimization (SRM). The study of theoretical guarantees for the estimator $\hphi$, in terms of bounds of kind \eqref{eq:PAC_framework} for the estimation error, has a long history which can possibly be tracked back to the 1970's with the works of \citet{vapnik1971uniform,vapnik1974theory}.
Two main classes of analyses can apparently be identified: the techniques using uniform convergence and the ones from algorithm stability. 
They are described in the following two paragraphs and will be linked to the question at stake in the third paragraph, that is deriving bounds of type \eqref{eq:PAC_framework} for the metric distance between the minimizers.
While being insufficient to fully explain the optimal error bounds that are observed in \eqref{eq:PAC_framework}, their presentation was found important to articulate the contributions of this work.
Moreover, the tools used to achieve the error bounds \eqref{eq:PAC_framework} share some similarities with these previous works.
Finally, it should be mentioned that the connection between uniform convergence and stability have been investigated in various works \citep[such as][and reference therein]{shalev2010learnability}.

It is assumed in this section that the minimizers of $\JJ$ and $\hJJ$ exist and are unique. The minimal value of $\JJ$ will be referred to as $\JJ_*$ with similar notation for other functions.

\subsubsection{Uniform Convergence}
The techniques in the uniform convergence class can be tracked back to the 1970's with the works of \cite{vapnik1971uniform,vapnik1974theory,vapnik1991principles} and the PAC framework proposed in the 1980's by \cite{valiant1984theory}.
Since then, their seminal works have generated a large body of literature and a mature theory. A complete introduction of these notions can be found in good textbooks \citep[such as][to name a few]{devroye2013probabilistic,shalev2014understanding,wainwright2019high,vershynin2018high,bach2021learning}.

The starting point of these works is the decomposition of the estimation error
\begin{equation} \label{eq:estimation_error_decomposition}
  \begin{aligned}
    \JJ(\hphi) - \JJ_* &\leq \JJ(\hphi) - \JJ(\phi_*) + \hJJ(\phi_*) - \hJJ(\hphi) && (\text{since $\hphi \in \argmin \hJJ$}) \\
    & = \ff(\hphi) - \ff(\phi_*) + \hff(\phi_*) - \hff(\hphi) && (\text{since the terms in } \RR \text{ cancel out}) \\
    & \leq 2 \sup_{\phi \in \Msp} \Delta(\phi) &&
    \end{aligned}
\end{equation}
where $\Delta(\phi) = |\ff(\phi) - \hff(\phi)|$. The uniform bound is taken to remove the dependency between $\hphi$ and $\hff$ that was preventing the $(l(\hphi,X_i))_{i=1}^n$ to be independent.
This allows to use standard concentration tools such as the McDiarmid's inequality \citep{mcdiarmid1998concentration} (assuming for simplicity that the cost function $0 \leq l \leq M$ for some $M > 0$) leading to the following generic bound on the estimation error
\begin{equation} \label{eq:generic_bound_uniform_cv}
\JJ(\hphi) - \JJ_* \leq 2\E\left[ \sup_{\phi \in \Msp} \Delta(\phi) \right] + M \sqrt{ \frac{2}{n} \log\left(\frac{2}{\delta} \right)},
\end{equation} 
with high probability at least $1 - \delta$. 
The downside of this uniform bound is to control $\E\left[ \sup_{\phi \in \Msp} \Delta(\phi) \right]$ (that is, the expectation of the supremum of a random process) which can be quite large and hindering any exploitation of a potential structure of the minimizers.
Yet, this control can be achieved in mainly two ways.

First, via symmetrization, the latter can further be bounded by the Rademacher complexity or the Gaussian complexity of the function class $\{(l(\phi,x_i))_{i=1}^n \, | \, \phi \in \Msp\}$ \citep{bartlett2002rademacher}. Note that when the values of $l$ are discrete, the Rademacher complexity can be linked to the VC dimension of the class of functions \citep{bousquet2003introduction}. 
The Rademacher complexities of the typical function classes encountered in estimation behave as $c / \sqrt{n}$, where $c > 0$ is a constant depending on the problem \citep{wainwright2019high,shalev2014understanding}. Note that the use of Rademacher complexities can be improved by considering localized versions \citep{bartlett2005local,koltchinskii2006local}. 

Second, when the cost $l$ exhibits some smoothness properties (for example Lipschitz) with respect to the variable $\phi$, the quantity $\E\left[ \sup_{\phi \in \Msp} \Delta(\phi) \right]$ can alternatively be bounded using chaining techniques such as Dudley's chaining or the Talagrand's generic chaining \citep{talagrand2014upper,vershynin2018high}. This leads to an upper-bound dependent on a measure of complexity of $\Msp$, such as the entropy of $\Msp$, linked to the covering numbers of $\Msp$, or the Talagrand's $\gamma_2$ functional. These complexities typically behave as $n^{-1/(2s)}$ with $s = 1$ for simple spaces such as Euclidean spaces, or doubling spaces, and with $s > 1$ for more complex spaces \citep[see][]{ahidar2020convergence,schotz2019convergence}. In the case $s > 1$, the rate is customarily said to be \emph{cursed by the dimension}.

To summarize, the bounds on the estimation error obtained with these techniques of uniform convergence decrease as $n^{-1/2}$. Without further assumption on the cost function $l$, this rate is optimal \citep[see][]{mendelson2008lower}.

\subsubsection{Algorithm Stability}

Another body of literature studies the generalization of learning algorithms through stability. 
These ideas can be tracked back to the 1970's as well with \cite{rogers1978finite} and later popularized in the founding article of \citet{bousquet2002stability}.
The notions involved in these works will be introduced here for the case of the empirical minimizers, although they can be applied to the broader setting of learning algorithms.\footnote{More precisely, a learning algorithm is a function $\Ac : \Xsp^n \to \Msp$ that maps the given data $\bX = (X_i)_{i=1}^n$ to an element of $\Msp$ which can be the minimizer $\hphi$ of the empirical risk $\hJJ$ or an approximation of it obtained using any algorithm (for example gradient descent).}

For convenient notation, let $\bX = (X_1,\ldots,X_n) \in \Xsp^n$ be the random vector containing the $n$ independent sample points, and let $Y \in \Xsp$ be an independent copy of $X_1$. The random vector $\bX'$ is obtained by swapping one element of $\bX$ with $Y$ so that $\bX' = (Y,X_2,\ldots,X_n)$. Furthermore, the empirical risk associated to $\bX'$ will be denoted by $\tJJ$ and its minimizer by $\tphi$.
It is important to remark at this point that the risk $\hJJ$ and therefore its minimizers are invariant under reordering of the components of the vector $\bX$. The perturbation of $\bX$ by swapping any element can therefore be thought as exchanging the first component.

The starting point of these works is a different decomposition of the estimation error
\begin{equation} \label{eq:estimation_error_decomposition_stability}
  \begin{aligned}
  \E_\bX\left[ \JJ(\hphi) - \JJ_* \right] &\leq \E_\bX\left[\JJ(\hphi) - \hJJ(\hphi)\right] && (\text{using Equation \ref{eq:estimation_error_decomposition}}) \\
  & = \E_\bX\left[\E_Y[l(\hphi,Y)] - l(\hphi,X_1)\right] && (\text{since } \E[l(\hphi,X_i)] = \E[l(\hphi,X_j)] \text{ for all } i,j) \\
  & = \E_{\bX,Y}\left[l(\tphi,X_1) - l(\hphi,X_1)\right], && (\text{with } \tphi = \argmin \tJJ)
  \end{aligned}
\end{equation}
because the roles of $Y$ and $X_1$ can be switched as being independent copies.
This decomposition emphasizes the link between the estimation error and the \emph{stability} of the empirical minimizer, that is if it does not vary much when a sample of the data $\bX$ is perturbed or removed. Various notions of stability have been developed such as the hypothesis stability \citep{rogers1978finite,kearns1997algorithmic} or the uniform one \citep{bousquet2002stability}. 
The latter is probably the most popular one since it can be used to obtain Gaussian concentration bounds.
The empirical minimizer is said to be $\epsilon$-uniform stable if there exists $\epsilon > 0$ such that $|l(\hphi,x) - l(\tphi,x)| \leq \epsilon$, for all $x \in \Xsp$ and all $(\bX, Y) \in \Xsp^{n+1}$. This stability can be leveraged to directly bound the expected estimation error as $\E_\bX\left[ \JJ(\hphi) - \JJ_* \right] \leq \epsilon$, and obtain \cite[see][Theorem 12]{bousquet2002stability} that 
\begin{equation} \label{eq:thm_uniform_stability}
\JJ(\hphi) - \JJ_* \leq \epsilon + (4n\epsilon + M) \sqrt{\frac{1}{n} \log\left(\frac{1}{\delta}\right)},
\end{equation}
with high probability $1 - \delta$ whenever $0 \leq l \leq M$. 
This bound is informative as long as the stability $\epsilon \leq c n^{-1}$ for some constant $c>0$.
In contrast of \eqref{eq:generic_bound_uniform_cv}, the stability of the empirical minimizers avoids the uniform bound on $\Msp$, which is desirable since the empirical solutions may exhibit a very strong structure. Importantly, the bound \eqref{eq:thm_uniform_stability} cannot be cursed by the dimension since it cannot depend exponentially on the complexity of the space $\Msp$.

Note that \eqref{eq:thm_uniform_stability} is also obtained using the McDiarmid's inequality and requires the boundedness of the cost $l$. This might narrow the range of application of these results.
This assumption can however be dropped at the expense of a stronger notion of stability and a bounded sub-Gaussian or sub-exponential diameter of $\Xsp$ \citep{kontorovich2014concentration,maurer2021concentration}. 

Recently, a series of papers \citep{feldman2018generalization,feldman2019high,bousquet2020sharper} improved \eqref{eq:thm_uniform_stability} to
\begin{equation} \label{eq:thm_improved_uniform_stability}
\JJ(\hphi) - \JJ_* \leq \epsilon \log(n) \log\left(\frac{1}{\delta}\right) + M \sqrt{\frac{1}{n} \log\left(\frac{1}{\delta}\right)},
\end{equation}
holding with high probability $1-\delta$. This was considered as a breakthrough in the algorithm stability community as the bound becomes informative as long as $\epsilon \leq c n^{-1/2}$ which extends the validity of the result to a broader variety of applications.

In addition of theses refinements, it is relevant for this work to mention that the estimation error can still be bounded when the stability of the empirical minimizers happens only on a subset of large measure \citep{kutin2012almost,rakhlin2005stability}.

\subsubsection{Obtaining Concentration Bounds on $\vartheta$}

Despite a wide interest in bounding the estimation error, little is however known regarding the concentration of the minimizers in the sense of \eqref{eq:PAC_framework}, involving a distance on the minimizers.

As a preliminary, it can be mentioned that when the minimizers of $\JJ$ and $\hJJ$ have explicit expressions with a clear dependency on the measure $\mu$ and the sample $(X_i)_{i=1}^n$, their structure, if any, could be leveraged to directly use concentration tools. This could happen for example when $\JJ$ and $\hJJ$ are quadratic, which is however restrictive for modern machine learning applications.

The first observation towards providing an indirect analysis, is that when an error bound of kind 
\begin{equation}\label{eq:intro_growth}
\tau \vartheta(\hphi,\phi_*)^\beta \leq \JJ(\hphi) - \JJ_*,
\end{equation}
 is available for some $\beta > 0$ and $\tau > 0$, the concentration of $\vartheta(\hphi,\phi_*)$ can be derived using the results of the two previous paragraphs. 
A direct combination of \eqref{eq:intro_growth} and \eqref{eq:thm_improved_uniform_stability} in the example of barycenters in a compact and convex $\Omega \subset \R^d$ would give 
\begin{equation} \label{eq:thm_improved_uniform_stability_barycenter}
  \| \hphi - \phi_*\|_2^2 = \JJ(\hphi) - \JJ_* \leq \epsilon \log(n) \log\left(\frac{1}{\delta}\right) + \diam(\Omega)^2 \sqrt{\frac{1}{n} \log\left(\frac{1}{\delta}\right)},
\end{equation}
with high probability $1 - \delta$.
Assuming for example that the random variables $X \sim \mu$ are sub-Gaussian with variance proxy $\|X\|_{\psi_2}$, see Preliminaries section or \citep{vershynin2018high}, their concentration property would instead lead to
\begin{equation} \label{eq:optimal_rate_barycenter}
\| \hphi - \phi_*\|_2^2 \leq c d \| X \|_{\psi_2}^2 \frac{1}{n} \log\left(\frac{2}{\delta}\right),
\end{equation}
with high probability $1 - \delta$ and some universal constant $c > 0$ \citep{jin2019short}. The rate in $n^{-1}$ in \eqref{eq:optimal_rate_barycenter} is called \emph{parametric}.
Even if the stability of the empirical minimizer can be shown to be $\epsilon = O(n^{-1})$, the sampling error in $\sqrt{n^{-1} \log(\delta^{-1})}$ would slow down the rate nevertheless. 
The same slow rate is obtained with the uniform convergence \eqref{eq:generic_bound_uniform_cv}.

This gap is known in the literature \citep[see][]{shalev2009stochastic,shalev2010learnability,klochkov2021stability} and is due to the fact that uniform convergence and stability techniques do not incorporate any structure of the problem.
In particular, the behavior of $\JJ$ around its minimizers can be leveraged, such as \eqref{eq:intro_growth}, to obtain faster concentration rates on the estimation error.
When the cost function $l$ is strongly convex and Lipschitz, the uniform stability is know to behave as $\epsilon = O(n^{-1})$ \citep[see][]{shalev2010learnability}. Yet necessary, this improvement is still vain since the rate in \eqref{eq:thm_improved_uniform_stability} would now be controlled by the sampling error in $\sqrt{n^{-1} \log(\delta^{-1})}$. 
\cite{shalev2009stochastic} asked if a bound on the estimation error for strongly convex and Lipschitz $l$ with rate in $O(n^{-1})$ is possible. Ten years later, this question was answered positively by \cite{klochkov2021stability} using an intricate decomposition of the estimation error and an extension of the McDiarmid's inequality so that \eqref{eq:thm_improved_uniform_stability} becomes in this case
\begin{equation} \label{eq:thm_improved_twice_uniform_stability}
\JJ(\hphi) - \JJ_* \leq c\frac{\log(n)}{n} \log\left(\frac{1}{\delta}\right),
\end{equation}
with high probability $1-\delta$ where $c > 0$ is a constant depending on the Lipschitz constant and the constant of strong convexity of $l$.
Up to the $\log(n)$ factor, this last result is in par with the optimal rate for the barycenter problem on Euclidean spaces \eqref{eq:optimal_rate_barycenter}.
However, the strong convexity and the Lipschitz assumptions on $l$ are rather restrictive and are not satisfied by the examples presented in Section \ref{sec:intro_applications}.

Probably one of the first work attempting to understand the fundamental mechanisms driving the concentration of the empirical minimizers is from \citet{van2017concentration} which was building on a former work of \citet{chatterjee2014new}.
They aim at bounding, with the notation of this paper, the excess risk $\ff(\hphi) - \ff(\phi_0) + \RR(\hphi)$ where $\phi_0 = \argmin \ff$ is the minimizer of the non-penalized version of \eqref{eq:opt_J}. It is again assumed that $\phi_0$ exists and is unique and that the penalty $\RR$ is convex.
This question is different from the one at stake in this work, since \eqref{eq:PAC_framework} only seeks to provide bounds for the random part, that is between $\hphi$ and $\phi_*$.
Relying on a curvature condition on the objective function around its minimizers, the analysis then uses uniform convergence techniques to provide the concentration bounds on the excess risk towards the expected excess risk.
The guarantees on $\|\hphi - \phi_0\|$ are then derived in the case where $\|\hphi - \phi_0\|^2 = \ff(\hphi) - \ff(\phi_0)$, which excludes many applications.

Recently, two works by \citet{schotz2019convergence} and \citet{ahidar2020convergence} have explored the concentration of $\vartheta(\hphi,\phi_*)$ using uniform convergence techniques too.
Both assume a growth of $\JJ$ around its minimizers of type \eqref{eq:intro_growth} without assuming the strong convexity of $l$.
In \citep{ahidar2020convergence}, the bounds are derived in the case where $l$ and $\Msp = \Xsp$ are bounded, exploiting a H\"older condition for $\phi \mapsto l(\phi,x)$ holding uniformly on $x$.
This hypothesis is usually accessible in the bounded setting, but might fail when the spaces are unbounded.
In \citep{schotz2019convergence}, the analysis is more general: $\Msp$ can be different from $\Xsp$ and $\Msp$, $\Xsp$ and $l$ can be unbounded. To deal with this additional difficulty, the author introduces an interesting weaker notion of smoothness for $l$ through a quadruple inequality, that can be interpreted as a H\"older condition on $\phi \mapsto l(\phi,x)$ with a constant that varies with $x$.
A similar condition will be used in the present work.
While providing a first significant understanding on the question, the bounds obtained depend on the entropy of the space $\Msp$, which might lead to rates suffering from the curse of dimensionality.
Indeed, in the example of the estimation of barycenters in non-positively curved spaces, the rates of \cite{schotz2019convergence,ahidar2020convergence} might be cursed by the dimension, and therefore suboptimal, since it is known that parametric rates hold regardless of the entropy of $\Msp$ \citep[see][]{le2022fast}.

\subsection{Contributions}

The review of the literature indicates that available results are insufficient to provide optimal concentration rates for $\vartheta(\argmin \hJJ ; \argmin \JJ)$ for a wide variety of estimation problems, such as those described in Section \ref{sec:intro_applications}.
This work attempts to fill that gap by providing a set of high-level assumptions from which optimal concentration rates can be proven. These assumptions fit most of the estimation problems encountered in application where the objective function can be non-convex and defined on unbounded sets.
Importantly, the rates obtained are parametric, and are not exponentially cursed by the complexity of the underlying space.

The paper is organized as follows. Notation and preliminary facts are gathered in the Section \ref{sec:preliminaries}. The Section \ref{sec:assumptions} introduces the different assumptions needed to prove the concentration rates that will be presented in Section \ref{sec:main_results}. These results will be proved in the Section \ref{sec:proof_main_result} while being applied to some practical estimation problems in Section \ref{sec:applications}.
To improve the readability of the core of the document, many technical results are postponed to the appendices.

\section{Preliminaries} \label{sec:preliminaries}

\subsection{General Facts on Random Variables}
For $p > 0$, the $p$-th norm of a random variable $X$ on the probability space $(\Xsp,\mu)$ is defined by $\| X \|_{L^p} = \E[|X|^p]^{1/p}$. This definition can be extended to the case $p = +\infty$ with the essential supremum $\| X \|_{L^\infty} = \esssup |X|$.

For $q \geq 1$, the $q$-exponential Orlicz norm of a random variable $X$ on the probability space $(\Xsp,\mu)$ is defined by $\| X \|_{\psi_q}$ as $\| X \|_{\psi_q} = \inf_{c > 0} \{ \E\left[ \exp( |X/c|^q) \right] \leq 2\}$.
The case $q = 1$ corresponds to sub-exponential random variables while $q = 2$ corresponds to sub-Gaussian ones. 
A random variable having a finite $\| X \|_{\psi_q}$ admits a tail satisfying $\P(|X| \geq t) \leq 2 \exp\left(-\frac{t^q}{\| X \|_{\psi_q}^q}\right)$ and its moments grow as $\|X\|_{L^p} \leq \| X \|_{\psi_q} p^{1/q}$.
These properties can be shown to be equivalent \citep[see][for more details]{vershynin2018high}.
Importantly, $\|X^2\|_{\psi_1} = \|X\|_{\psi_2}^2$ and $\|XY\|_{\psi_1} \leq \|X\|_{\psi_2} \|Y\|_{\psi_2}$ \cite[see][Lemma 2.7.6 and 2.7.7]{vershynin2018high}.

The $p$-th norm of vectors in $\R^d$ will be written using the usual notation $\|\cdot\|_p$. When $X \in \R^d$ is a random vector, its $\psi_q$ norm is defined by $\|X\|_{\psi_q} = \sup_{v \in \mathbb{S}^{d-1}} \| \langle X, v \rangle \|_{\psi_q}$. 
In particular, $\| \| X\|_2 \|_{\psi_q} \leq \sqrt{d} \| X \|_{\psi_q}$ \cite[see][Section 3.4]{vershynin2018high}.

Following \citet[Section 2.4]{boucheron2013concentration}, a random variable $X$ is said to be sub-gamma on the right tail with variance factor $\sigma^2$ and scale parameter $S$ if its logarithmic moment generating function satisfies
\begin{equation*}
  \log \E\left[ \exp\left(t(X - \E X) \right) \right] \leq \frac{t^2 \sigma^2}{2(1-St)},
\end{equation*}
for every $t \in \left(0,1/S\right)$. The set of all such random variables is denoted by $\mathrm{sub}\Gamma_+(\sigma^2,S)$.
The random variable $X$ is furthermore said sub-gamma on the left tail with variance factor $\sigma^2$ and scale parameter $S$ if $-X \in \mathrm{sub}\Gamma_+(\sigma^2,S)$. Similarly, the set of all such random variables is denoted by $\mathrm{sub}\Gamma_-(\sigma^2,S)$.
Lastly, a random variable $X$ is said to be sub-gamma with variance factor $\sigma^2$ and scale parameter $S$ if $X \in \mathrm{sub}\Gamma_+(\sigma^2,S) \cap \mathrm{sub}\Gamma_-(\sigma^2,S)$. The set of all such random variables is denoted by $\mathrm{sub}\Gamma(\sigma^2,S)$.
These sub-gamma properties can be characterized in terms of the tail. A random variable $X \in \mathrm{sub}\Gamma_+(\sigma^2,S)$ satisfies
\begin{equation*}
\P \left( X > \sqrt{2 \sigma^2 t} + S t\right) \leq \exp(-t),
\end{equation*}
with a similar tail on $-X$ for $X \in \mathrm{sub}\Gamma_-(\sigma^2,S)$. In particular, this tail condition implies that
\begin{equation*}
\P \left( X > t \right) \leq \exp\left(-\frac{-t^2}{2(\sigma^2 + St)}\right).
\end{equation*}

\subsection{Notation}

In this work, the function $l : \MM \times \Xsp \to \R$ is assumed to be such that $l(\cdot,\phi)$ are measurable and $\E[|l(X,\phi)|] < +\infty$ for every $\phi \in \MM$.

Let $(X_i)_{i=1}^n$ be random variables sampled independently from $\mu$. 
The vector of $\Xsp^n$ will be denoted with bold font, that is $\bX = (X_1, \ldots, X_n)$ for random vectors and $\bx = (x_1,\ldots, x_n)$ for vectors.
Let $Y$ be an independent copy of $X_1$. The random vector $\bX'$ is defined by $\bX' = (Y, X_2, \ldots, X_n)$. It is obtained by swapping the first element of $\bX$.

For convenient short hand notation, we will use $\Sc = \argmin \JJ$ while $\JJ_*$ will refer to its minimal value.
The empirical risk and its minimizers depend on the samples $\bX$. This dependency will not be made explicit but will be clear from the context.

Let $\mathcal{E}_1$ and $\mathcal{E}_2$ two subsets of $\Msp$, the notion of similarity between two sets used in this work is
\begin{equation*}
\vartheta(\mathcal{E}_1 ; \mathcal{E}_2) = \sup_{\phi_1 \in \mathcal{E}_1} \vartheta(\phi_1, \mathcal{E}_2) = \sup_{\phi_1 \in \mathcal{E}_1} \inf_{\phi_2 \in \mathcal{E}_2} \vartheta(\phi_1, \phi_2).
\end{equation*}
Note that $\vartheta( \cdot ; \cdot)$ is not a metric since it is not symmetric and $\vartheta(\mathcal{E}_1 ; \mathcal{E}_2) = 0$ only implies that $\mathcal{E}_1 \subseteq \mathcal{E}_2$. However, $\vartheta( \cdot ; \cdot)$ satisfies the triangle inequality, in the sense that $\vartheta(\mathcal{E}_1 ; \mathcal{E}_2) \leq \vartheta(\mathcal{E}_1 ; \mathcal{E}_3) + \vartheta(\mathcal{E}_3 ; \mathcal{E}_2)$ for all $\mathcal{E_1}, \mathcal{E_2}$, $\mathcal{E_3}$ non-empty subsets of $\Msp$. The reverse triangle inequality holds in the form $\left| \vartheta(\mathcal{E}_1 ; \mathcal{E}_2) -  \vartheta(\mathcal{E}_3 ; \mathcal{E}_2) \right| \leq \max \left( \vartheta(\mathcal{E}_1 ; \mathcal{E}_3), \vartheta(\mathcal{E}_3 ; \mathcal{E}_1) \right)$.

To further simplify the notation, we write $[\JJ \leq M] = \{ \phi \in \Msp \, | \, \JJ(\phi) \leq M\}$ and similar notation can be guessed from the context, for example $[\JJ \geq m] = \{ \phi \in \Msp \, | \, \JJ(\phi) \geq m\}$ and $[m \leq \JJ \leq M] = \{ \phi \in \Msp \, | \, m \leq \JJ(\phi) \leq M\}$.

A metric space $(\Msp,\vartheta)$ is said geodesic if for any two points $\phi,\psi \in \Msp$, there exists a path linking them whose length equals $\vartheta(\phi,\psi)$. For a proper introduction on these concepts see for example the paper of \citet{sturm2003probability}.

Let $(\Msp,\vartheta)$ be a geodesic space. It has positive curvature in the sense of Alexandrov, if for every constant speed geodesic $\gamma : [0,1] \to \Msp$ and every $\phi \in \Msp$ the concavity inequality holds:
\begin{equation} \label{eq:concavity_PC}
  \vartheta^2(\gamma(t),\phi) \geq (1-t) \vartheta^2(\gamma(0),\phi) + t \vartheta^2(\gamma(1),\phi) - t(1-t) \vartheta^2(\gamma(0),\gamma(1)).
\end{equation}
Similarly, $\Msp$ is said to be a non-positively curved space, in the sense of Alexandrov, if the converse inequality holds. The equality holds when $\Msp$ is a Hilbert space.

\section{Assumptions} \label{sec:assumptions}

In this section, the assumptions needed to derive the main concentration result are described.

\subsection{Existence of Minimizers}

Studying the concentration of the minimizers of $\JJ$ and $\hJJ$ is possible only if those exist. Their existence is formalized by the following assumption.

\begin{assumption} \label{ass:existstence_minimizer}
  The function $\JJ : \MM \to \R \cup \{ + \infty \}$ is proper, that is such that $\mathrm{dom} \JJ = \{ \phi \in \MM  \, | \, \JJ(\phi) \neq + \infty \} \neq \emptyset$ and have at least one minimizer.
  Furthermore, for all $n \geq 1$ and $\mu^n$-almost every $\bx \in \Xsp^n$, the empirical risks $\hJJ$, associated to $\bx$, are also assumed to be proper with at least one minimizer. Its minimizers, as functions of $\bx$, are all assumed to be measurable.
\end{assumption}
On a metric space $(\Msp, \vartheta)$ satisfying the Heine–Borel property (that is for which any closed and bounded subspace is compact), at least one minimizer exists when the function $\JJ$ (respectively $\hJJ$) is lower semi-continuous and either $\MM$ is bounded or $\JJ$ (respectively $\hJJ$) is coercive.

The measurability assumption of the minimizers is made upfront to avoid technical considerations about their existence, which will be clear in usual applications.
It can be shown, under mild conditions that there will be at least one measurable minimizer in $\hSc$ \citep[see][]{hess1996epi}. 
Moreover, the measurability assumptions can be weakened using outer expectations \citep[see][]{van1997weak}.

\subsection{Local H\"older-type Error Bound}

The following assumption describes the growth of $\JJ$ locally around its minimizers and is a generalization of \eqref{eq:intro_growth}.

\begin{assumption} 

\label{ass:local_holder_error}
There exist $\JJ_0 \in (\JJ_*,+\infty]$, $\beta \geq 1$ and $\tau > 0$ such that for all $\phi \in \MM$,
\begin{equation} \label{eq:lojasiewicz}
  \phi \in [\JJ \leq \JJ_0] \implies \tau \vartheta(\phi, \Sc)^\beta \leq \mathcal{J}(\phi) - \mathcal{J}_*.
\end{equation}
\end{assumption}

The bound \eqref{eq:lojasiewicz} is a central ingredient in this work as it avoids pathological oscillations of $\JJ$ around its minimizers and it provides leverage to use concentration techniques on the function values $\JJ(\widehat{\phi}) - \JJ_*$ to bound $\vartheta$.

It is important to mention that many functions used in applications satisfy Assumption \ref{ass:local_holder_error}.
For example, any strongly convex function $\JJ$ satisfies the assumption with $\beta = 2$ and $\JJ_0 = + \infty$. 
The assumption can still hold even when the function is not convex. 
An argument in this direction is that when $\Msp = \R^d$ and $\JJ$ is a semi-algebraic function \citep{bochnak2013real}, the bound \eqref{eq:lojasiewicz} holds on any compact subset \cite[Theorem II]{lojasiewicz1993geometrie}. 
Semi-algebraic functions contains for examples piecewise polynomials, square root, quotients, norms, relu, rank norm to name a few. As a consequence of Tarski Seidenberg theorem \cite[Chapter 2]{basu2014algorithms}, these functions are furthermore stable under many operations such as differentiation or composition.
For these reasons, the Assumption \ref{ass:local_holder_error} covers most of the functions encountered in applications where $\Msp = \R^d$.
Extension of this result for semi-analytic functions to Riemannian analytic manifolds can be found in \cite[Theorem 6.4]{bierstone1988semianalytic}.

This assumption appears in many communities under different names such as local H\"older-type bound in functional analysis \citep{li2013global}, \L{}ojasiewicz inequality in real algebraic geometry \citep{lojasiewicz1993geometrie} or \L{}ojasiewicz error bound inequality in optimization \citep{bolte2017error}.
In the particular case of Fréchet means where $\Xsp = \Msp$ and $l = \rho^2$, the Assumption \ref{ass:local_holder_error} is known as variance inequality \citep{sturm2003probability}. 
It is also called a margin condition in the works of \citet{barrioLecturesEmpiricalProcesses2007} and of \citet{van2017concentration} or a low noise assumption in the one of \citet{ahidar2020convergence}. 

In the ERM literature, the Bernstein condition is often presented as central to derive fast concentration rates \citep[see for example][]{koltchinskii2006local,bartlett2006empirical,klochkov2021stability} which assumes that there exists a constant $c$ such that for all $\phi \in \Msp$, there exists a $\phi_* \in \Sc$ such that $\E[ (l(\phi, X) - l(\phi_*,X))^2] \leq c (\JJ(\phi) - \JJ_*)$.
This condition is in particular verified when $\JJ$ satisfies the Assumption \ref{ass:local_holder_error} for $\beta = 2$, $\JJ_0 = +\infty$ and that $l(\cdot, X)$ is uniformly Lipschitz.

\begin{remark} \label{rmk:lojasiewicz_modulus}
The Assumption \ref{ass:local_holder_error} is equivalent to assuming that there exist $\JJ_0 \in (\JJ_*,+\infty]$ and $\beta > 0$ with the quantity
\begin{equation*}
\tau = \inf \left\{  \frac{\JJ(\phi) - \JJ_*}{\vartheta(\phi,\Sc)^\beta}, \quad\phi \in [\JJ \leq \JJ_0]\right\}
\end{equation*}
being positive. 
Such a quantity might however be delicate to compute and only a positive lower-bound of $\tau$ can be accessible. 
The constant $\beta^{-1}$ is often referred to as the \L{}ojasiewicz exponent in the optimization literature \citep{bolte2017error}. To the best of the author's knowledge, there is no particular name for $\tau$, so it will be named \L{}ojasiewicz constant of $\JJ$.
\end{remark}

\subsection{Convergence to the Basin} \label{sec:convergence_bassin}

To control $\vartheta(\hSc ; \Sc)$ leveraging Assumption \ref{ass:local_holder_error}, the minimizers of $\hJJ$ have to fall in the level-set $[\JJ \leq \JJ_0]$. 
This event will be assumed to hold with high probability.

\begin{assumption} \label{ass:convergence_bassin}
  There exists $\eta : \N \to [0,1]$  with $\lim_{n \to + \infty}\eta(n) = 0$ such that
  \begin{equation*}
    \P \left( \sup_{\hphi \in \hSc} \JJ(\hphi) > \JJ_0 \right) \leq \eta(n).
  \end{equation*}
\end{assumption}

This assumption is rather natural since, as the number of points increases, the empirical risk $\hJJ$ is expected to better approach the true risk. 
The value of $\JJ(\hphi)$ is therefore expected to converge to the minimum $\JJ_*$ so that $\hphi$ would fall into the level-set $[\JJ \leq \JJ_0]$.

The technique used to derive an Assumption \ref{ass:convergence_bassin}, that is optimal, depends on the context and is therefore left to be checked for each application. 
Let us however mention some techniques that can be helpful.
Following \eqref{eq:estimation_error_decomposition}, 
if $\sup_{\phi \in \Msp}\Delta(\phi) \leq t$ holds with high probability, for example when the Rademacher complexity of the function class or the entropy of $\Msp$ are bounded, then setting $t = (\JJ_0 - \JJ_*)/2$ would imply the Asumption \ref{ass:convergence_bassin}.
When concentration-based argument are not available, for example when the diameter of $\Msp$ is unbounded, the small-ball method \citep{mendelson2014learning,mendelson2018learning} might be used to prove that Assumption \ref{ass:convergence_bassin} holds.
Finally, the convergence to the basin might also be proved considering approaches like the one of \citet{schotz2019convergence}.

\subsection{\L{}ojasiewicz Constant of \texorpdfstring{$\hJJ$}{the empirical risk}} \label{sec:ass_lojasiewicz}

Let $\bX \in \Xsp^n$ and $\by \in \supp(\mu)^n$. Let $\tJJ$, respectively $\tSc$, refer to the empirical risk and the empirical set of minimizers associated to $\by$.
Let $\htau$ be the random variable defined by
\begin{equation*}
  \begin{aligned}
\htau = \inf_{\by} \inf \left\{  \frac{\hJJ(\phi) - \hJJ_*}{\vartheta(\phi,\hSc)^\beta}, \quad \phi \in \left( [\JJ \leq \JJ_0] \cap \tSc  \right) \right\}.
\end{aligned}
\end{equation*}
The constant $\htau$ quantifies the behavior of $\hJJ$, around its minimizers, regarding only the minimizers (falling into the basin $[\JJ \leq \JJ_0]$) of all possible empirical risks.

The next assumption will be crucial in proving a notion of stability of the empirical minimizers.

\begin{assumption} \label{ass:lojasewicz_modulus}
There exists $\kappa : \N \to [0,1]$ with $\lim_{n \to + \infty}\kappa(n) = 0$ such that
 \begin{equation*}
  \P \left( \htau < \frac{\tau}{2} \right) \leq \kappa(n).
  \end{equation*}
\end{assumption}

\begin{remark}
  Similarly to Remark \ref{rmk:lojasiewicz_modulus}, it might only be possible to compute a lower bound to $\htau$. In this assumption, the lower bound however has to be sufficiently large to be greater than $\frac{1}{2} \tau$ with high probability.
  Actually, the only hypothesis needed is that the lower-bound is larger than a constant with high probability.
  The constant here is chosen to be $\tau/2$ in order not to overload the bounds of Theorem \ref{thm:concentration}.
\end{remark}

\begin{remark}
  The Assumption \ref{ass:lojasewicz_modulus} is weaker than supposing that
  \begin{equation*}
    \inf \left\{  \frac{\hJJ(\phi) - \hJJ_*}{\vartheta(\phi,\hSc)^\beta}, \quad\phi \in [\JJ \leq \JJ_0]\right\} \geq \frac{1}{2}\tau,
  \end{equation*}
  with high probability, which involves the infimum on the whole level-set. The proof of the main result only requires the local error-bound \eqref{eq:lojasiewicz} to hold for $\hJJ$ regarding only the minimizers of $\tJJ$.
\end{remark} 

A general treatment of the phenomenon assumed in the Assumption \ref{ass:lojasewicz_modulus} and providing the weakest conditions under which it holds is, to the knowledge of the author, an open question and is postponed to future works.
However, a systematic analysis shows, in the examples, that this assumption is verified. 
Note that in some applications, $\ff$ might be convex and a regularization $\RR$ is added in order to ensure the uniqueness of the minimizer and to improve the resolution of \eqref{eq:opt_J} with numerical methods. This regularization is usually strongly convex and thus drives the \L{}ojasiewicz constants of $\JJ$ and $\hJJ$.

\subsection{Smoothness of \texorpdfstring{$l$}{l}}
The concentration phenomenon can occur when the integrand $l$ possesses some smoothness. The smoothness needed is described in the following assumption.

\begin{assumption} \label{ass:holder_l}
There exist $\alpha > 0$, a pseudometric\footnote{A pseudometric on $\Xsp$ is a function from $\Xsp \times \Xsp \to \R_+$ satisfying the same axioms of a metric except the implication $a(x,y) = 0 \implies x=y$.}  $a : \Xsp \times \Xsp \to \R_+$ and a subset $\Upsilon \subseteq \Msp$ such that
\begin{equation} \label{eq:holder_l}
  \phi, \psi \in \Upsilon \implies l(\phi,x) - l(\psi,x) - l(\phi,y) + l(\psi,y) \leq a(x,y) \vartheta(\phi,\psi)^{\alpha},
\end{equation}
for $\mu$-almost every $x, y \in \Xsp$. 
The function $a$ should further satisfy $\| a(X,Y) \|_{\psi_1} < + \infty$ for independent $X,Y \sim \mu$.

The set $\Upsilon$ should contain $\Sc$ and it is further assumed that there exists $\iota : \N \to [0,1]$ with $\lim_{n \to + \infty} \iota(n) = 0$ such that $\P(\hSc \not\subset \Upsilon) \leq \iota(n)$.
\end{assumption}

The assumption made here can be interpreted as a H\"older condition on $l(\cdot,x)$ with a constant that may vary with $x$. Since $\Xsp$ is a metric space, this constant has to be measured with respect to a reference point. 
This translates to assuming a H\"older condition on $l(\cdot,x) - l(\cdot,y)$ with constant $a(x,y)$ with the integrability condition that $a(X,Y)$ should be sub-exponential.
In the remaining of the paper, the notation $\| a \|_{\psi_1}$ will be a shorthand to the quantity $\| a(X,Y) \|_{\psi_1}$ for independent $X,Y \sim \mu$. Note that $\| a \|_{\psi_1}$ is oftentimes called the sub-exponential diameter of the space $(\Xsp, a, \mu)$ \citep{maurer2021concentration}.

Remark that when $l(\cdot, x)$ satisfy a H\"older condition, that is when there exist $K > 0$ and $\alpha > 0$ such that for all $x \in \Xsp$, $|l(\phi,x) - l(\psi,x)| \leq K \vartheta(\phi,\psi)^\alpha$ for all $\psi, \phi \in \Msp$ then Assumption \ref{ass:holder_l} holds with $a(x,y) = 2K$ and $\iota = 0$. Such a bound is oftentimes available when $\diam(\Xsp) < + \infty$.

The inequality in \eqref{eq:holder_l}, when holding on the whole space $\Upsilon = \Msp$, is also called a \emph{quadruple inequality} in \citep{schotz2019convergence}.
In the Fréchet mean setting, that is when $\Msp = \Xsp$ and $l = \rho^2$ and if this condition holds for all $\phi,\psi \in \Msp$ with $a(x,y) = 2 \rho(x,y)$, then Assumption \ref{ass:holder_l} characterizes metric spaces with non-positive curvature \cite[Definition 2.1]{sturm2003probability}. 

\begin{remark}
  The class of pseudometric functions includes distances and additively separable functions of the form $a(x,y) = \left(h(x) + h(y) \right)\indic_{x \neq y}$ with $h : \Xsp \to \R_+$.
\end{remark}

\section{Main Results} \label{sec:main_results}

It is now time to present the main results of this work.
The concentration in probability of $\vartheta(\hSc;\Sc)$ will be derived leveraging the following result.

\begin{theorem} \label{thm:mcdiarmid_combes}
Let $f : \Xsp^n \to \R$ and $\Bc \subseteq \Xsp^n$ such that $p = \P(\bX \notin \Bc) \leq 3/4$.
Assume there exist a pseudometric $b : \Xsp \times \Xsp \to \R^+$ with $\|b\|_{\psi_1} < + \infty$ such that
\begin{equation} \label{eq:mcdiarmid_condition}
  |f(\bx)-f(\by)| \leq \sum_{i=1}^n b(x_i,y_i), \quad \text{for all }\bx,\by \in \Bc.
\end{equation}  
Then 
\begin{equation*}
  f(\bX) - \E\left[f \, | \, \bX \in \Bc\right] \leq 4 n\| b \|_{\psi_1} \sqrt{p} + e\|b\|_{\psi_1} \left(2\sqrt{n\log\left(\frac{1}{\delta}\right)} + \log\left(\frac{1}{\delta}\right)\right),
\end{equation*} 
with probability at least $1-p-\delta$.
\end{theorem}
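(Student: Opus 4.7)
The strategy is to reduce to a global concentration statement through a Lipschitz extension of $f$, handle the sub-exponentiality of $b$ via a Bernstein-Freedman-type martingale inequality, and finally pay for the conditioning on $\Bc$ through a bias term.

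First, I would build an extension $\tilde f : \Xsp^n \to \R$ of $f|_\Bc$ that satisfies \eqref{eq:mcdiarmid_condition} \emph{globally}, using the inf-convolution
\begin{equation*}
  \tilde f(\bx) = \inf_{\by \in \Bc}\Bigl\{ f(\by) + \sum_{i=1}^n b(x_i, y_i)\Bigr\}.
\end{equation*}
Because $b$ is a pseudometric, the triangle inequality yields both $\tilde f = f$ on $\Bc$ and the global bound $|\tilde f(\bx) - \tilde f(\bx')| \leq \sum_{i=1}^n b(x_i, x'_i)$ for every $\bx, \bx' \in \Xsp^n$.

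Second, I would show that $\tilde f(\bX) - \E[\tilde f(\bX)]$ lies in $\mathrm{sub}\Gamma(\sigma^2,M)$ with $\sigma^2 = O(n\|b\|_{\psi_1}^2)$ and $M = O(\|b\|_{\psi_1})$. Writing the Doob martingale $D_i = \E[\tilde f \mid X_1,\ldots,X_i] - \E[\tilde f \mid X_1,\ldots,X_{i-1}]$ and using the bounded-differences bound on an independent copy of $X_i$, each $|D_i|$ is dominated by a random variable of $\psi_1$-norm at most $\|b\|_{\psi_1}$. Controlling the conditional Laplace transform of each $D_i$ through the $\psi_1$-to-MGF estimate $\E[e^{\lambda D_i}\mid\mathcal F_{i-1}] \leq \exp(c\lambda^2\|b\|_{\psi_1}^2)$ valid for $|\lambda| \leq 1/(c\|b\|_{\psi_1})$, and summing, yields via the sub-gamma tail from Section \ref{sec:preliminaries}
\begin{equation*}
\tilde f(\bX) - \E[\tilde f(\bX)] \leq e\|b\|_{\psi_1}\Bigl(2\sqrt{n\log(1/\delta)} + \log(1/\delta)\Bigr)
\end{equation*}
with probability at least $1-\delta$; the explicit factor $e$ is obtained by tuning the MGF-threshold in the Laplace argument.

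Third, I would estimate the bias $\E[\tilde f(\bX)] - \E[f \mid \bX \in \Bc]$. Since $\tilde f = f$ on $\Bc$ and $\E[f \indic_\Bc] = (1-p)\E[f \mid \Bc]$, one obtains
\begin{equation*}
\E[\tilde f(\bX)] - \E[f \mid \Bc] = \E\bigl[(\tilde f(\bX) - \E[f\mid \Bc])\indic_{\bX\notin\Bc}\bigr].
\end{equation*}
Integrating the extension bound $\tilde f(\bx) \leq f(\by) + \sum_i b(x_i, y_i)$ against an independent $\bY$ with conditional law $\mu^{\otimes n}(\cdot \mid \Bc)$ gives the majorant $\sum_i \E_{\bY\mid\Bc}\E_\bX[b(X_i, Y_i)\indic_{\bX\notin\Bc}]$. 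Cauchy-Schwarz in $\bX$, Jensen in $\bY$, the estimate $\E[b^2 \mid \bY \in \Bc] \leq \E[b^2]/(1-p)$, and the general moment bound $\|b\|_2 \leq 2\|b\|_{\psi_1}$ reduce this to $2n\|b\|_{\psi_1}\sqrt p/\sqrt{1-p}$, which is at most $4n\|b\|_{\psi_1}\sqrt p$ thanks to the hypothesis $p \leq 3/4$. On the event $\{\bX\in\Bc\}\cap\{\text{concentration of }\tilde f\text{ holds}\}$, of probability at least $1-p-\delta$, one has $f(\bX) = \tilde f(\bX)$, so chaining the two estimates yields the claim.

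The main obstacle is the second step: getting a Bernstein-Freedman inequality for sub-exponential (not bounded) martingale differences with sharp enough constants to recover the explicit factor $e$ in front of the concentration term. Coarse truncation or symmetrisation arguments give the correct shape but weaker numerical constants; the clean route is to convert the $\psi_1$-bound on each $D_i$ directly into a conditional MGF estimate valid on an interval of radius $\sim 1/\|b\|_{\psi_1}$ and then appeal to the sub-gamma characterisation recalled in Section \ref{sec:preliminaries}. The Lipschitz extension and the bias calculation are, by comparison, straightforward applications of the triangle inequality and Cauchy-Schwarz.
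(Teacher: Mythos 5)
Your proposal follows essentially the same route as the paper: the same inf-convolution (Kirszbraun-type) extension of $f$ off $\Bc$, the same Cauchy--Schwarz bias estimate $\E[\bar f(\bX)] - \E[f \mid \bX \in \Bc] \leq 4n\|b\|_{\psi_1}\sqrt{p}$ using $p \leq 3/4$, and a concentration step for the globally $b$-Lipschitz extension. The only divergence is that the ``main obstacle'' you identify---a Bernstein--Freedman-type inequality for sub-exponential martingale increments with the explicit factor $e$---is precisely what Theorem \ref{thm:bernstein_mcdiarmid} (already stated in the paper, adapted from Maurer) supplies: bounding the conditional versions by $\bar f_k(\bX)(\bx) \leq \E\left[ b(X_k,X_k') \mid X_k \right]$ gives $\sigma^2 = n\|b\|_{\psi_1}^2$ and $M = \|b\|_{\psi_1}$, so the paper simply invokes that result instead of re-deriving it.
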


This result is a generalization of McDiarmid's inequality when the differences are sub-exponential with high probability. 
In the literature, the derivation of such inequalities when differences are assumed bounded with high-probability has been studied in a few works \citep[for example][]{mcdiarmid1998concentration,kutin2002extensions,kutin2012almost,combes2015extension,warnke2016method}.
This result is a combination of ideas from \citet{maurer2021concentration} and \citet{combes2015extension} for which a proof is given in Section \ref{sec:proof_main_result}.

In the present case of the concentration of $\vartheta(\hSc;\Sc)$, the subset $\Ac \subseteq \Xsp^n$ of interest, that will be used as $\Bc$ in the theorem, will be defined by 
\begin{equation} \label{eq:def_Ac}
\Ac = \left\{ \sup_{\hphi \in \hSc}\JJ(\hphi) \leq \JJ_0 \right\} \cap \left\{ \htau \geq \frac{1}{2} \tau \right\} \cap \left\{ \hSc \subseteq \Upsilon \right\}.
\end{equation} 

The remainder of this section consists in computing the conditional expectation and checking the condition \eqref{eq:mcdiarmid_condition}.
Both will be achieved by taking advantage of the \emph{stability} of the empirical minimizers in $\Ac$, in the sense presented in the next paragraph. 
In the rest of this section, the probability of being outside of $\Ac$ will be referred to as $p_n = \P(\bX \notin \Ac)$ and is controlled using Assumptions \ref{ass:convergence_bassin}, \ref{ass:lojasewicz_modulus} and \ref{ass:holder_l} by $p_n \leq \eta(n) + \kappa(n) + \iota(n)$.

\subsection{The Empirical Minimizers are Stable}

The cornerstone of the results obtained in this work is the stability of the empirical minimizers of $\hJJ$ when the components of $\bX$ are perturbed. Let $\bX, \bY \in \Xsp^n$. Conditionally on the event $\bX \in \Ac, \bY \in \Ac$, the minimizers in $\hSc$ and $\tSc$ fall into the level set $[\JJ \leq \JJ_0]$. Furthermore, conditionally on this event, the \L{}ojasiewicz constant $\htau \geq \frac{1}{2} \tau$ so that, for a minimizer $\tphi \in \tSc$,
\begin{equation*}
  \begin{aligned}
    \frac{\tau}{2} \vartheta(\tphi,\hSc)^{\beta} & \leq \hJJ(\tphi) - \hJJ_* \\
    & \leq \inf_{\phi \in \hSc} \hJJ(\tphi) - \hJJ(\phi) + \tJJ(\phi) - \tJJ(\tphi) & \text{($\tphi$ is a minimizer of $\tJJ$)}\\
    & = \inf_{\phi \in \hSc} \hff(\tphi) - \hff(\phi) + \tff(\phi) - \tff(\tphi) & \text{(the terms in $\RR$ cancel out)} \\
    & = \inf_{\phi \in \hSc} \frac{1}{n} \sum_{i=1}^n l(\tphi,X_i) - l(\phi,X_i) + l(\phi,Y_i) - l(\tphi,Y_i) \\
    & \leq \frac{1}{n} \sum_{i=1}^n a(X_i,Y_i) \vartheta(\tphi,\hSc)^{\alpha} .& \text{(from Assumption \ref{ass:holder_l})}
  \end{aligned}
  \end{equation*}
Since this bound holds for any $\tphi \in \tSc$, the following lemma is derived.

\begin{lemma} \label{lem:hJJ_error_bound}
For $\beta > \alpha $ and conditionally on the event $\bX \in \Ac, \bY \in \Ac$, it holds that
\begin{equation*}
 \vartheta(\tSc ; \hSc)^{\beta - \alpha} \leq \frac{2}{\tau} \frac{1}{n} \sum_{i=1}^n a(X_i,Y_i).
\end{equation*}
\end{lemma}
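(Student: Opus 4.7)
The plan is to follow essentially the chain of inequalities that the preceding paragraph sketches, being careful to note exactly where each piece of the event $\{\bX \in \Ac, \bX' \in \Ac\}$ is used. Pick any $\tphi \in \tSc$. I first want to apply the \L{}ojasiewicz-type lower bound for $\hJJ$ at the point $\tphi$. By definition of $\htau$, this requires $\tphi$ to belong to $[\JJ \le \JJ_0] \cap \tSc$ for some choice of perturbation. Since $\bX' \in \Ac$ gives $\JJ(\tphi) \le \JJ_0$ and the perturbation we are considering is exactly $\bX \to \bX'$, the point $\tphi$ lies in the set over which the infimum in the definition of $\htau$ is taken. Combined with $\htau \ge \tau/2$ (which follows from $\bX \in \Ac$), this yields
\begin{equation*}
  \frac{\tau}{2} \vartheta(\tphi, \hSc)^{\beta} \le \hJJ(\tphi) - \hJJ_*.
\end{equation*}

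The second step is to upper-bound the right-hand side by comparing $\hJJ$ and $\tJJ$. For any $\phi \in \hSc$, optimality of $\tphi$ for $\tJJ$ gives $\tJJ(\phi) - \tJJ(\tphi) \ge 0$, so
\begin{equation*}
  \hJJ(\tphi) - \hJJ(\phi) \le \hJJ(\tphi) - \hJJ(\phi) + \tJJ(\phi) - \tJJ(\tphi).
\end{equation*}
The regularizer $\RR$ cancels out of the right-hand side, leaving a purely data-dependent expression. Since $\bX$ and $\bX'$ differ only in the first coordinate (with $X_1$ replaced by $Y$), all terms with $i \ge 2$ in the empirical averages cancel, and one is left with
\begin{equation*}
  \hJJ(\tphi) - \hJJ(\phi) + \tJJ(\phi) - \tJJ(\tphi) = \frac{1}{n}\bigl(l(\tphi,X_1) - l(\phi,X_1) + l(\phi,Y) - l(\tphi,Y)\bigr).
\end{equation*}

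The third step is to bound this by the H\"older-type inequality of Assumption \ref{ass:holder_l}. Here I need $\tphi$ and $\phi$ to both lie in $\Upsilon$, which is exactly where the events $\bX' \in \Ac$ (giving $\tSc \subseteq \Upsilon$) and $\bX \in \Ac$ (giving $\hSc \subseteq \Upsilon$) are used. Applying \eqref{eq:holder_l} to the pair $(\tphi, \phi)$ at $(x,y)=(X_1,Y)$ gives a bound of $a(X_1,Y)\,\vartheta(\tphi,\phi)^{\alpha}$. Chaining with the first step and then taking the infimum over $\phi \in \hSc$ yields
\begin{equation*}
  \frac{\tau}{2}\,\vartheta(\tphi, \hSc)^{\beta} \le \frac{1}{n}\, a(X_1,Y)\,\vartheta(\tphi,\hSc)^{\alpha}.
\end{equation*}
Dividing by $\vartheta(\tphi,\hSc)^{\alpha}$ (the inequality is trivial when this quantity is zero) and taking the supremum over $\tphi \in \tSc$ produces the claimed bound on $\vartheta(\tSc;\hSc)^{\beta-\alpha}$.

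The steps are individually routine, so there is no real obstacle beyond bookkeeping; the only point that requires care is recognising that both inclusions $\hSc \subseteq \Upsilon$ and $\tSc \subseteq \Upsilon$, the two level-set conditions $\JJ(\hphi), \JJ(\tphi) \le \JJ_0$, and the lower bound $\htau \ge \tau/2$ are all simultaneously needed, which is precisely why one conditions on $\{\bX \in \Ac, \bX' \in \Ac\}$ rather than on $\bX \in \Ac$ alone.
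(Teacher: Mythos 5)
Your proposal is correct and follows essentially the same argument as the paper: the chain $\frac{\tau}{2}\vartheta(\tphi,\hSc)^{\beta} \leq \hJJ(\tphi)-\hJJ_*$ via the definition of $\htau$, the optimality of $\tphi$ for $\tJJ$ with cancellation of $\RR$ and of the unperturbed samples, the quadruple inequality of Assumption \ref{ass:holder_l}, and the infimum over $\hSc$ followed by the supremum over $\tSc$. Your explicit bookkeeping of which parts of $\{\bX \in \Ac, \bX' \in \Ac\}$ justify each step ($\htau \geq \tau/2$, the level-set conditions, and both inclusions in $\Upsilon$) is exactly the content the paper leaves implicit.
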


This notion of stability of the empirical minimizers is, in essence, similar to the notions of stability of \cite{bousquet2002stability}.
Since the probability $p_n$ is controlled by Assumptions \ref{ass:convergence_bassin}, \ref{ass:lojasewicz_modulus} and \ref{ass:holder_l}, the stability happens with high-probability, and echoes the work of \cite{kutin2012almost}.

This notion of stability can then be exploited to bound the expectation of $\vartheta(\hSc ; \Sc)^\beta$ conditionally on $\bX \in \Ac$.

\begin{lemma} \label{lem:bound_conditional_expectation}
  Under Assumptions \labelcref{ass:existstence_minimizer,ass:local_holder_error,ass:convergence_bassin,ass:lojasewicz_modulus,ass:holder_l} with the further condition that $p_n \leq 3/4$ then
  \begin{equation*}
    \E \left[ \vartheta( \hSc ; \Sc )^{\beta} \, | \,  \bX \in \Ac \right] \leq L^{\frac{\beta}{\beta-\alpha}} \left( c_1 n^{-\frac{\alpha}{\beta-\alpha}} + K \sqrt{p_n} \right),
  \end{equation*}
  with 
  \begin{equation*}
  \begin{aligned}
    L &= \tau^{-1} \| a \|_{\psi_1}, & K &= 2^{\max(0,\alpha-1)+2} \left( L^{\frac{-\alpha}{\beta-\alpha}} \diam(\Sc)^{\alpha}+c_2\right),\\
    c_1 &= c_1(\alpha,\beta) = \left(\frac{4\beta}{\beta-\alpha}\right)^\frac{\beta}{\beta-\alpha}, & c_2 &= 
  c_2(\alpha,\beta) = 2 \max\left(\frac{4\alpha}{\beta-\alpha},1\right)^\frac{\alpha}{\beta-\alpha}.
  \end{aligned}
  \end{equation*}

\end{lemma}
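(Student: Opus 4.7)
The plan is to reduce the bound on $\vartheta(\hSc;\Sc)^\beta$ to a bound on the estimation error $\JJ(\hphi)-\JJ_*$ via Assumption \ref{ass:local_holder_error}, and then to leverage the stability of Lemma \ref{lem:hJJ_error_bound} through the exchangeability trick of \eqref{eq:estimation_error_decomposition_stability} combined with the quadruple inequality of Assumption \ref{ass:holder_l}. The conditioning on $\bX\in\Ac$ is handled by splitting into the event $F=\{\bX\in\Ac\}\cap\{\bX'\in\Ac\}$, on which the stability applies, and its complement, which pays a $\sqrt{p_n}$ correction through Cauchy--Schwarz; the hypothesis $p_n\leq 3/4$ guarantees $\P(\Ac)\geq 1/4$, so that $\E[\cdot\mid\Ac]\leq 4\E[\cdot\,\indic_\Ac]$.

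First I would observe that on $\bX\in\Ac$, every $\hphi\in\hSc$ lies in $[\JJ\leq\JJ_0]$ by definition of $\Ac$, so Assumption \ref{ass:local_holder_error} yields $\tau\vartheta(\hphi,\Sc)^\beta\leq\JJ(\hphi)-\JJ_*$. Fixing $\phi_*\in\Sc$ and using $\hJJ(\hphi)\leq\hJJ(\phi_*)$ (the $\RR$-terms cancel), this is upper bounded by $(\ff(\hphi)-\hff(\hphi))+(\hff(\phi_*)-\ff(\phi_*))$. The second summand has mean zero and contributes only to the $\sqrt{p_n}$ correction once one conditions on $\Ac$. For the first, the computation recorded in \eqref{eq:estimation_error_decomposition_stability} gives $\E[\ff(\hphi)-\hff(\hphi)]=\E[l(\tphi,X_1)-l(\hphi,X_1)]$ by exchangeability of $(\bX,Y)$ and $(\bX',X_1)$. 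Applying Assumption \ref{ass:holder_l} with $\phi=\tphi$, $\psi=\hphi$, $x=X_1$, $y=Y$ and then a second exchange $X_1\leftrightarrow Y$ yields the symmetrized bound $2\E[l(\tphi,X_1)-l(\hphi,X_1)]\leq\E[a(X_1,Y)\vartheta(\tphi,\hphi)^\alpha]$; since the event $F$ is invariant under this swap, the same identity holds conditionally on $F$.

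On $F$, Lemma \ref{lem:hJJ_error_bound} gives $\vartheta(\tphi,\hphi)^\alpha\leq (2a(X_1,Y)/(\tau n))^{\alpha/(\beta-\alpha)}$, so after injecting this bound the conditional contribution is of order $\tau^{-\beta/(\beta-\alpha)}n^{-\alpha/(\beta-\alpha)}\E[a(X_1,Y)^{\beta/(\beta-\alpha)}\mid F]$. The sub-exponentiality $\|a\|_{\psi_1}<\infty$ controls this moment via the preliminary bound $\|X\|_p\leq p\|X\|_{\psi_1}$ with $p=\beta/(\beta-\alpha)$, which collects into the main $L^{\beta/(\beta-\alpha)}c_1 n^{-\alpha/(\beta-\alpha)}$ term after absorbing constants into $L=\tau^{-1}\|a\|_{\psi_1}$ and $c_1=(4\beta/(\beta-\alpha))^{\beta/(\beta-\alpha)}$.

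The main obstacle, in my view, is the bookkeeping of the correction arising from the event $\{\bX\in\Ac\}\setminus F$, on which the stability of Lemma \ref{lem:hJJ_error_bound} is not available. There I would bound $\vartheta(\tphi,\hphi)^\alpha$ through the triangle inequality $\vartheta(\tphi,\hphi)\leq\vartheta(\tphi,\Sc)+\diam(\Sc)+\vartheta(\hphi,\Sc)$ followed by the elementary inequality $(u+v+w)^\alpha\leq 2^{\max(0,\alpha-1)}(u^\alpha+v^\alpha+w^\alpha)$: the middle term produces $\diam(\Sc)^\alpha$, while the two endpoint terms are recast through Assumption \ref{ass:local_holder_error} into the target quantity $\vartheta(\hSc;\Sc)^\beta$ with exponent $\alpha/\beta$, producing an implicit inequality that is closed by Cauchy--Schwarz against $\indic_{\bX'\notin\Ac}$ to extract the $\sqrt{p_n}$ factor. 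The resulting $\E[a^{2\beta/(\beta-\alpha)}]^{1/2}$ moment bound generates the constant $c_2$, and the combinatorial factor $2^{\max(0,\alpha-1)+2}$ in $K$ comes from iterating the triangle inequality together with the passage to second moments.
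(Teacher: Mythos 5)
Your treatment of the main term (restricting to $F=\{\bX\in\Ac\}\cap\{\bX'\in\Ac\}$, symmetrizing over the swap $X_1\leftrightarrow Y$, applying the quadruple inequality to $(\hphi,\tphi)$ and then Lemma \ref{lem:hJJ_error_bound} plus the moment bound $\|a\|_p\leq p\|a\|_{\psi_1}$) is essentially the paper's bound on the swappable part, and your double-swap symmetrization is a legitimate variant of the paper's use of $\E[\tJJ_*\indic]=\E[\hJJ_*\indic]$ (Lemma \ref{lem:expectation_empirical_risk_equal}). The genuine gap is in the correction term on $\{\bX\in\Ac\}\cap\{\bX'\notin\Ac\}$. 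There you propose to bound $a(X_1,Y)\vartheta(\tphi,\hphi)^\alpha$ via $\vartheta(\tphi,\hphi)\leq\vartheta(\tphi,\Sc)+\diam(\Sc)+\vartheta(\hphi,\Sc)$ and to ``recast'' $\vartheta(\tphi,\Sc)$ through Assumption \ref{ass:local_holder_error}. But on this event none of the defining conditions of $\Ac$ hold for $\bX'$: you do not know that $\tSc\subseteq\Upsilon$ (so Assumption \ref{ass:holder_l} cannot even be applied to the pair $(\tphi,\hphi)$), nor that $\JJ(\tphi)\leq\JJ_0$ (so Assumption \ref{ass:local_holder_error} gives no control of $\vartheta(\tphi,\Sc)$), and in any case $\vartheta(\tphi,\Sc)$ is governed by $\tSc$, not by the target $\vartheta(\hSc;\Sc)$. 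This is precisely why the paper's proof (via Lemma \ref{lem:bound_E2}) never invokes any property of $\tphi$ on the bad event: it compares $\hphi$ with a fixed exact minimizer $\phi_*\in\Sc$ (both guaranteed in $\Upsilon$, and the growth at $\hphi$ is available since $\bX\in\Ac$), uses $\vartheta(\hphi,\phi_*)^\alpha\leq 2^{\max(0,\alpha-1)}(\vartheta(\hphi,\Sc)^\alpha+\diam(\Sc)^\alpha)$, controls the moments of $\vartheta(\hSc;\Sc)^{\beta-\alpha}\indic_{\bX\in\Ac}$ directly by Lemma \ref{lem:subexp_bound_distance} (no self-referential inequality to close), and extracts $\sqrt{p_n}$ through Cauchy--Schwarz combined with the negative-correlation Lemma \ref{lem:negative_correlation}.

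A second, related problem is your claim that the term $\hff(\phi_*)-\ff(\phi_*)$ ``has mean zero and contributes only to the $\sqrt{p_n}$ correction through Cauchy--Schwarz.'' That step requires $\E[l(\phi_*,X)^2]<+\infty$, which is not among the assumptions (the paper only assumes $\E[|l(\phi,X)|]<+\infty$, with sub-exponential control on the \emph{increments} $a$, never on $l$ itself), and it would inject the standard deviation of $l(\phi_*,X)$ into the constant $K$, which does not appear in the statement. The paper instead shows that the corresponding contribution ($E_{2,2}$ in Lemma \ref{lem:bound_E2}) is non-positive by a Fubini-plus-equidistribution argument using only first moments. Finally, a minor point: the crude normalization $\E[\,\cdot\mid\bX\in\Ac]\leq 4\,\E[\,\cdot\,\indic_{\bX\in\Ac}]$ loses a factor relative to the paper's $(1-p_n)^{-1/2}\leq 2$ and would not reproduce the stated constants $c_1$ and $K$ exactly.
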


Similarly to \eqref{eq:estimation_error_decomposition_stability}, the proof is essentially based on switching the variables $Y$ and $X_1$ in the expectation. This step allows to leverage the stability of the empirical minimizers of Lemma \ref{lem:hJJ_error_bound}.
However, due to the conditioning on $\bX \in \Ac$, the random variables $Y$ and $X_i$ do not share the same distribution anymore so that extra care must be taken. 
Note that the function $l$ or the spaces $\Msp$ or $\Xsp$ can be unbounded which require additional precautions compared to the usual argument of \citet{bousquet2002stability,feldman2018generalization,feldman2019high,bousquet2020sharper}.
For these reasons, the proof of this lemma is postponed to Section \ref{sec:proof_main_result:conditional_expectation}.

The next corollary is an instance of Lemma \ref{lem:bound_conditional_expectation} for particular values of $\alpha$ and $\beta$.
\begin{corollary}
  When $\beta = 2$ and $\alpha = 1$, the bound of Lemma \ref{lem:bound_conditional_expectation} reads
  \begin{equation*}
    \E \left[ \vartheta( \hSc ; \Sc )^{\beta} \, | \,  \bX \in \Ac \right] \leq 2^6 L^2 n^{-1} + 4 \left(L \diam(\Sc) + 8 L^2\right) \sqrt{p_n}.
  \end{equation*}
  When $\Sc$ contains a unique minimizer, the bound further simplifies to
  \begin{equation*}
    \E \left[ \vartheta( \hSc ; \Sc )^{\beta} \, | \,  \bX \in \Ac \right] \leq 2^6 L^2 n^{-1} + 2^5 L^2 \sqrt{p_n}.
  \end{equation*}
\end{corollary}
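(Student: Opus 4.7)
The plan is essentially a routine specialization of Lemma \ref{lem:bound_conditional_expectation} to the values $\beta = 2$ and $\alpha = 1$, so there is no real obstacle beyond bookkeeping the constants. I would start by plugging the exponents into the formulas for $c_1$, $c_2$, $K$ and the outer factor $L^{\beta/(\beta-\alpha)}$. With $\beta - \alpha = 1$, the exponent $\beta/(\beta-\alpha)$ equals $2$ and $\alpha/(\beta-\alpha)$ equals $1$, so the outer factor becomes $L^2$, the rate factor $n^{-\alpha/(\beta-\alpha)}$ becomes $n^{-1}$, and $L^{-\alpha/(\beta-\alpha)}$ becomes $L^{-1}$.

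Next I would compute $c_1 = (4\beta/(\beta-\alpha))^{\beta/(\beta-\alpha)} = 8^2 = 64 = 2^6$ and $c_2 = 2 \max(4\alpha/(\beta-\alpha),1)^{\alpha/(\beta-\alpha)} = 2 \cdot 4 = 8$. Since $\max(0,\alpha-1) = 0$ when $\alpha = 1$, the prefactor in $K$ is $2^{0+2} = 4$, giving $K = 4(L^{-1}\diam(\Sc) + 8)$.

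Putting the pieces together, the lemma yields
\begin{equation*}
\E\!\left[\vartheta(\hSc;\Sc)^2 \,\big|\, \bX \in \Ac\right] \leq L^2\!\left(2^6 n^{-1} + 4(L^{-1}\diam(\Sc) + 8)\sqrt{p_n}\right),
\end{equation*}
and distributing $L^2$ across the $\sqrt{p_n}$ term produces the claimed bound $2^6 L^2 n^{-1} + 4(L\diam(\Sc) + 8 L^2)\sqrt{p_n}$.

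Finally, when $\Sc$ is a singleton one has $\diam(\Sc) = 0$, which kills the $L\diam(\Sc)$ summand and leaves $2^6 L^2 n^{-1} + 32 L^2 \sqrt{p_n} = 2^6 L^2 n^{-1} + 2^5 L^2 \sqrt{p_n}$, as stated. The entire argument is a direct substitution, so no probabilistic or analytic step beyond Lemma \ref{lem:bound_conditional_expectation} is required.
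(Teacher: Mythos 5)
Your proposal is correct and coincides with the paper's intent: the corollary is a direct specialization of Lemma \ref{lem:bound_conditional_expectation}, and your evaluations $c_1 = 2^6$, $c_2 = 8$, $K = 4(L^{-1}\diam(\Sc)+8)$, together with the observation $\diam(\Sc)=0$ for a singleton, reproduce exactly the stated bounds. No further argument is needed.
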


\subsection{Concentration in Probability}

The main results of this paper are now stated.
\begin{theorem} \label{thm:concentration}
  Suppose Assumptions \labelcref{ass:existstence_minimizer,ass:local_holder_error,ass:convergence_bassin,ass:lojasewicz_modulus,ass:holder_l} hold and assume furthermore that $p_n \leq 3/4$.
  For $0 < \beta -\alpha < 2$, let $q = \min(\beta - \alpha,1)$, $Q = \max(\beta-\alpha,1)$ and $s = \min\left( 1, \frac{2}{\beta-\alpha}-1\right)$. Then, for any $\delta > 0$,
  \begin{equation} \label{eq:concentration_rate}
  \begin{aligned}
    \vartheta(\hSc ; \Sc)^q 
    & \leq L^{\frac{1}{Q}} \left( c_1^{\frac{q}{\beta}} n^{-\frac{\alpha}{\beta Q}} + 2^{\frac{1}{Q}} e \left( 2\sqrt{\frac{1}{n^s} \log\left(\frac{1}{\delta}\right)} + \frac{1}{n^s}\log\left(\frac{1}{\delta}\right) \right) \right) + C_n p_n^{\frac{q}{2\beta}},
  \end{aligned}
\end{equation}
with probability at least $1 - p_n - \delta$ where $C_n = L^{\frac{1}{Q}} \left( K^{\frac{q}{\beta}} + 2^{2+\frac{1}{Q}} n^{1 - \frac{1}{Q}}\right)$ and
where the constants $c_1$, $K$ and $L$ are defined in Lemma \ref{lem:bound_conditional_expectation}.
\end{theorem}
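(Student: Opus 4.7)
The plan is to apply Theorem \ref{thm:mcdiarmid_combes} to the function $f(\bX) = \vartheta(\hSc; \Sc)^q$ with $\Bc = \Ac$ as defined in (\ref{eq:def_Ac}), using Lemma \ref{lem:hJJ_error_bound} to verify the pseudometric difference condition and Lemma \ref{lem:bound_conditional_expectation} to control the conditional expectation. The final probability $1 - p_n - \delta$ then accounts for the mass $p_n$ of the event $\{\bX \notin \Ac\}$ together with the $\delta$-deviation granted by Theorem \ref{thm:mcdiarmid_combes}.

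Since $q = \min(\beta-\alpha, 1) \leq 1$, the map $t \mapsto t^q$ is subadditive, so the triangle inequality for $\vartheta(\cdot\,;\,\cdot)$ gives $|f(\bX) - f(\bX')| \leq \vartheta(\hSc; \tSc)^q$. On the event $\{\bX, \bX' \in \Ac\}$, Lemma \ref{lem:hJJ_error_bound} combined with the identity $q/(\beta-\alpha) = 1/Q$ yields
\[
|f(\bX) - f(\bX')| \leq \left(\frac{2\, a(X_1, Y)}{\tau n}\right)^{1/Q}.
\]
By the permutation-invariance of $\hJJ$, the analogous bound holds when any one coordinate is swapped, so I would take $b(x_i, y_i) = (2\, a(x_i, y_i)/(\tau n))^{1/Q}$ in (\ref{eq:mcdiarmid_condition}). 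Because $Q \geq 1$, raising the sub-exponential $a$ to the power $1/Q \leq 1$ only lightens the tail, giving $\|b\|_{\psi_1} \lesssim 2^{1/Q} L^{1/Q}/n^{1/Q}$.

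Substituting into Theorem \ref{thm:mcdiarmid_combes} with $p = p_n$, and using the identity $2/Q - 1 = s$ for the Gaussian-type term together with the weaker bound $n^{-1/Q} \leq n^{-s}$ for the linear term, gives
\[
f(\bX) - \E[f \mid \bX \in \Ac] \leq 2^{2+1/Q} L^{1/Q} n^{1-1/Q} \sqrt{p_n} + 2^{1/Q} e\, L^{1/Q}\!\left(2\sqrt{\tfrac{1}{n^s}\log\tfrac{1}{\delta}} + \tfrac{1}{n^s}\log\tfrac{1}{\delta}\right)
\]
with probability at least $1 - p_n - \delta$. In parallel, Jensen's inequality applied to the concave $t \mapsto t^{q/\beta}$ (recall $q \leq \beta$), followed by the subadditivity $(u+v)^{q/\beta} \leq u^{q/\beta} + v^{q/\beta}$, turns Lemma \ref{lem:bound_conditional_expectation} into
\[
\E[f \mid \bX \in \Ac] \leq L^{1/Q}\!\left(c_1^{q/\beta} n^{-\alpha/(\beta Q)} + K^{q/\beta}\, p_n^{q/(2\beta)}\right),
\]
using $(q/\beta)(\beta/(\beta-\alpha)) = 1/Q$. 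Adding the two displays, and bounding $\sqrt{p_n} \leq p_n^{q/(2\beta)}$ (valid since $q/(2\beta) \leq 1/2$ and $p_n \leq 1$) to merge both $p_n$-dependent terms into a single $C\, p_n^{q/(2\beta)}$ contribution, produces exactly (\ref{eq:concentration_rate}).

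The step I expect to be hardest is translating the single-swap stability of Lemma \ref{lem:hJJ_error_bound} into the hypothesis of Theorem \ref{thm:mcdiarmid_combes}, which demands a pseudometric bound on $|f(\bx) - f(\by)|$ for \emph{arbitrary} pairs $\bx, \by \in \Ac$: a naive hybrid argument chains through intermediate configurations that need not lie in $\Ac$. Resolving this cleanly requires either appealing to the single-swap reformulation underlying Theorem \ref{thm:mcdiarmid_combes} (in the spirit of \cite{maurer2021concentration}) or invoking the permutation-invariance of $\Ac$ and the fact that each incremental difference only depends on the perturbed coordinate. Once this is handled, the rest is bookkeeping, essentially driven by the two identities $q/(\beta-\alpha) = 1/Q$ and $s = 2/Q - 1$.
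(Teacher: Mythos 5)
Your proposal follows essentially the same route as the paper's own proof: the same function $f(\bX)=\vartheta(\hSc;\Sc)^q$ on the same event $\Ac$, Theorem \ref{thm:mcdiarmid_combes} applied with $b=\left(\tfrac{2}{n\tau}a\right)^{1/Q}$ via Lemma \ref{lem:hJJ_error_bound}, Jensen's inequality plus subadditivity applied to Lemma \ref{lem:bound_conditional_expectation}, and the same bookkeeping identities $q/(\beta-\alpha)=1/Q$, $s=2/Q-1$ and $\sqrt{p_n}\le p_n^{q/(2\beta)}$. The chaining subtlety you flag (passing from single-swap stability to the pseudometric bound for arbitrary pairs in $\Ac$) is treated in the paper's proof with exactly the same brevity, so it is not a gap of your argument relative to the paper's.
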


\begin{proof}
  Let $f(\bX) = \vartheta(\hSc ; \Sc)^{q}$. 
  The McDiarmid's inequality of Theorem \ref{thm:mcdiarmid_combes} is applied to bound $f(\bX) - \E[f \,|\, \bX \in \Ac] $ with high probability. 

  The conditional expectation will be bounded using Lemma \ref{lem:bound_conditional_expectation} as
  \begin{equation*}
  \begin{aligned}
    \E\left[ f(\bX) \, | \, \bX \in \Ac \right] & \leq \E\left[ \vartheta(\hSc ; \Sc)^{\beta} \,|\, \bX \in \Ac \right]^{\frac{q}{\beta}} & (\text{using Jensen's inequality, } q \leq \beta)\\
    & \leq L^{\frac{1}{Q}} \left( c_1^{\frac{q}{\beta}} n^{-\frac{\alpha}{\beta Q}} + K^\frac{q}{\beta} p_n^\frac{q}{2\beta} \right), & (\text{since } t > 0 \mapsto t^\frac{q}{\beta} \text{ is subadditive})
    \end{aligned}
  \end{equation*}
  where we also have used the fact that $q/(\beta - \alpha) = 1/Q$.
  
 The remaining of the proof consists in constructing the pseudometric $b$ satisfying the conditions of Theorem \ref{thm:mcdiarmid_combes}.
 With a slight abuse of notation let $\hSc$ and $\tSc$ be the sets of minimizers of $\hJJ$ defined from $\bx$ and $\tJJ$ from $\by$ respectively.
 When $\bx, \by \in \Ac$, 
 \begin{equation*}
 \begin{aligned}
 |f(\bx) - f(\by)| & \leq \left| \vartheta( \hSc ; \Sc)^{q} - \vartheta(\tSc ; \Sc)^{q} \right|  \leq \max\left( \vartheta( \hSc ; \tSc)^{q}, \vartheta( \tSc ; \hSc)^{q}\right) \leq \sum_{i=1}^n \left( \frac{2}{n \tau} a(x_i,y_i) \right)^\frac{1}{Q},
    \end{aligned}
  \end{equation*}
  using the triangle inequality combined with Lemma \ref{lem:hJJ_error_bound}.
  Therefore, the choice $b = \left( \frac{2}{n \tau} a \right)^{\frac{1}{Q}}$ satisfies the assumptions of Theorem \ref{thm:mcdiarmid_combes} so that
  \begin{equation*}
  \begin{aligned}
    f(\bX) - \E[f \,|\, \bX \in \Ac] & \leq 2^{2 + \frac{1}{Q}} L^{\frac{1}{Q}} n^{1-\frac{1}{Q}} \sqrt{p_n} + 2^{\frac{1}{Q}} e L^{\frac{1}{Q}} n^{-\frac{1}{Q}}\left(2\sqrt{n\log\left(\frac{1}{\delta}\right)} + \log\left(\frac{1}{\delta}\right) \right) \\
    & \leq 2^{2+\frac{1}{Q}} L^{\frac{1}{Q}} n^{1-\frac{1}{Q}} \sqrt{p_n} + 2^{\frac{1}{Q}} e L^{\frac{1}{Q}} \left(2\sqrt{\frac{1}{n^s} \log\left(\frac{1}{\delta}\right)} + \frac{1}{n^s} \log\left(\frac{1}{\delta}\right) \right),
  \end{aligned}
\end{equation*}
with probability at least $1 - p_n - \delta$ with $s = \min\left(1,\frac{2}{\beta-\alpha}-1\right)$. 
This bound combined with the bound on $\E\left[ f(\bX) \, | \, \bX \in \Ac \right]$ together with the fact that $\frac{q}{2\beta} < \frac{1}{2}$ proves the claimed result.
\end{proof}

The notable case $\beta=2$ and $\alpha = 1$ is a direct consequence of this result.
\begin{corollary} \label{cor:concentration}
When $\beta = 2$ and $\alpha = 1$, the bound of Theorem \ref{thm:concentration} reads
  \begin{equation*}
  \begin{aligned}
    \vartheta(\hSc ; \Sc) 
    &\leq 8 L \left( n^{-\frac{1}{2}} + 2\sqrt{\frac{1}{n}\log\left(\frac{1}{\delta}\right)}+ \frac{1}{n} \log\left(\frac{1}{\delta}\right) \right) + C p_n^{\frac{1}{4}},
  \end{aligned}
\end{equation*}
with probability at least $1 - p_n - \delta$, and $C = \left( 14 L + 2\sqrt{L \diam(\Sc)} \right)$.
\end{corollary}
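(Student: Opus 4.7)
The plan is to specialize Theorem \ref{thm:concentration} to the values $\alpha=1$ and $\beta=2$ and carry out the arithmetic to identify the constants. First I would compute the exponents appearing in the theorem: since $\beta-\alpha = 1$, we obtain $q = \min(1,1) = 1$, $Q = \max(1,1) = 1$ and $s = \min(1, 2/(\beta-\alpha) - 1) = 1$. This collapses $L^{1/Q}$, $n^{-\alpha/(\beta Q)}$, $n^s$ and the factor $n^{1-1/Q}$ in front of $p_n^{1/2}$ to $L$, $n^{-1/2}$, $n$ and $1$ respectively.

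Next I would evaluate the two problem constants. Plugging into the definitions of Lemma \ref{lem:bound_conditional_expectation} gives $c_1 = (4\beta/(\beta-\alpha))^{\beta/(\beta-\alpha)} = 8^2 = 64$, so $c_1^{q/\beta} = 8$. Similarly $c_2 = 2 \max(4\alpha/(\beta-\alpha),1)^{\alpha/(\beta-\alpha)} = 8$, and $2^{\max(0,\alpha-1)+2} = 4$, so $K = 4\bigl(L^{-1}\diam(\Sc) + 8\bigr) = 4L^{-1}\diam(\Sc) + 32$.

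Substituting everything into \eqref{eq:concentration_rate} yields a bound of the form
\begin{equation*}
\vartheta(\hSc;\Sc) \leq L\Bigl(8 n^{-1/2} + 2e\bigl(2\sqrt{n^{-1}\log(1/\delta)} + n^{-1}\log(1/\delta)\bigr)\Bigr) + L(K^{1/2} + 8)\, p_n^{1/4}.
\end{equation*}
Since $2e < 8$, all three terms in the first parenthesis can be uniformly majorized with coefficient $8L$, which matches the first summand of the claim. It remains to bound $C = L(K^{1/2}+8)$: subadditivity of the square root gives $K^{1/2} \leq 2\sqrt{\diam(\Sc)/L} + \sqrt{32} = 2\sqrt{\diam(\Sc)/L} + 4\sqrt{2}$, so multiplying by $L$ yields $C \leq 2\sqrt{L\diam(\Sc)} + L(4\sqrt{2}+8)$, and since $4\sqrt{2}+8 < 14$ the claimed form $C = 14L + 2\sqrt{L\diam(\Sc)}$ follows.

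The proof is essentially direct substitution, so no real obstacle arises; the only mildly delicate step is the bookkeeping for the numerical constants, namely absorbing $2e$ into the factor $8$ and invoking subadditivity of $\sqrt{\cdot}$ to split $K^{1/2}$ into the $\diam(\Sc)$-dependent piece and the purely $L$-dependent piece.
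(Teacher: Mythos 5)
Your proposal is correct and coincides with the paper's (implicit) argument: the corollary is stated as a direct consequence of Theorem \ref{thm:concentration}, and your substitution $q=Q=s=1$, $c_1^{1/2}=8$, $K=4L^{-1}\diam(\Sc)+32$, followed by absorbing $2e\leq 8$ and bounding $L(K^{1/2}+8)\leq 2\sqrt{L\diam(\Sc)}+(4\sqrt{2}+8)L\leq 14L+2\sqrt{L\diam(\Sc)}$, is exactly the intended bookkeeping. No gaps.
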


The following comments can be drawn from these results. 
As long as $\beta$ and $\alpha$ are independent of the complexity of $\Msp$, the rate of convergence will not depend exponentially on the dimension of $\Msp$.
This comes at the expense of additional terms driven by the probability of not falling into the basin $\Ac$ where the empirical minimizers are stable.
When $p_n$ decays sufficiently fast, the quantity $n^{1-\frac{1}{Q}} \sqrt{p_n}$ goes to zero faster than $n^{-s}$ and becomes negligible for large enough $n$. Actually, when $\beta-\alpha \leq 1$, the quantity $n^{1-\frac{1}{Q}} = 1$ so that $\sqrt{p_n}$ should decay faster than $n^{-s} = n^{-\frac{1}{2}}$ or $n^{-\frac{\alpha}{\beta Q}} = n^{-\frac{\alpha}{\beta}}$ which is a rather mild condition since in the considered applications, $p_n$ will decay as $\exp(-cn)$ for some $c > 0$.

When $\beta - \alpha$ approaches 2, the rate deteriorates. It is not clear to the author if this phenomena is to be excepted or is an artifact introduced by the tools used in the proofs. 
On the one hand, this can be due to a parameter $\beta$ getting larger meaning that the functions $\JJ$ and $\hJJ$ are becoming flatter, which naturally slows down the rate. On the other hand, this could also be interpreted as a lack of regularity of the problem since $\alpha$ would be too small compared to $\beta$.
In case the rate becomes too slow, techniques like \citep{schotz2019convergence} might be able to provide a usable rate.

Arguably the most studied case is the one where $\Msp$ is a Hilbert space and the losses $l(\cdot, x)$ are bounded, $\omega$-strongly convex and $K$-Lipschitz uniformly for $\mu$-almost every $x \in \Xsp$.
This case was discussed in Section \ref{sec:review_literature}. A combination of \eqref{eq:intro_growth} and the Proposition 2.1 of \citet{klochkov2021stability} was showing that, for all $\delta > 0$, the rate
\begin{equation} \label{eq:klochkov}
\vartheta(\hphi, \phi_*)^2 \leq c \frac{K^2}{\omega^2} \frac{\log(n)}{n} \log\left( \frac{1}{\delta}\right),
\end{equation}
was holding with high probability $1 - \delta$.
Such functions guarantee that the Assumptions \labelcref{ass:existstence_minimizer,ass:local_holder_error,ass:convergence_bassin,ass:lojasewicz_modulus,ass:holder_l} hold for $\tau = \omega$, $\beta = 2$, $\alpha = 1$, $\|a\|_{\psi_1} = 2K$, $\JJ_0 = + \infty$ and $\eta = \iota = \kappa = 0$. In view of Corollary \ref{cor:concentration}, a bound
\begin{equation} \label{eq:strgly_cvx_lipschitz}
\vartheta(\hphi, \phi_*)^2 \leq c \frac{K^2}{\omega^2} \frac{1}{n} \left(1 + \log\left( \frac{1}{\delta} \right) + \frac{1}{n} \log\left( \frac{1}{\delta} \right)^2 \right),
\end{equation}
holding with high probability $1 - \delta$ is obtained for all $\delta > 0$. Note that the constants $c > 0$ in the two bounds are different.
Up to the logarithm factor, the bound \eqref{eq:klochkov} is a parametric, while \eqref{eq:strgly_cvx_lipschitz} gives a true parametric rate. The bounds also differ in their nature, \eqref{eq:klochkov} is a 0-concentration bound while \eqref{eq:strgly_cvx_lipschitz} is a mean-concentration one. 
Finally, the bound \eqref{eq:strgly_cvx_lipschitz} contains an extra term $n^{-1} \log(\delta^{-1})^2$ which inherits from the sub-exponential nature of $\| a \|_{\psi_1}$.
With the further assumption that the losses are $K$-Lipschitz, it is possible to use a standard McDiarmid inequality, since in this case Lemma \ref{lem:hJJ_error_bound} would give $\vartheta(\hSc ; \tSc) \leq K n^{-1} \sum_{i=1}^n \indic_{X_i \neq Y_i}$. This strategy would remove the extra term $n^{-1} \log(\delta^{-1})^2$ of \eqref{eq:strgly_cvx_lipschitz}.

  The case $\beta = 2$ and $\alpha = 1$ is particularly important in modern statistical learning because: (i) the losses $l$ have bounded sub-gradients in order to use first-order descent algorithms and (ii) the risks are designed to have a strong local convexity to accelerate the convergence of optimization algorithms. 
  The reason why the case $\beta = 2$ is often encountered can also be justified in view of a Taylor expansion of the risk (assuming that $\Msp$ is a Riemannian manifold, proper smoothness of $\JJ$ and  the uniqueness of its minimizers). Sufficiently close to the minimizers, the quadratic growth driven by the Hessian of $\JJ$, if not vanishing, dominates the higher-order terms. This leads to local quadratic error bounds.
  Estimation problems where $\beta \neq 2$ and $\alpha \neq 1$ can be constructed but are usually obtained by designing pathological settings. 
  Such examples can be found, for instance, in the literature of power Fréchet means in Hadamard spaces \citep{schotz2023variance,yun2023exponential}. 
  Moreover, in the literature of Fréchet means on positively curved spaces, it is possible to design ad-hoc measures $\mu$ to obtain flatter local error bounds, which is often called smeariness. An example on the sphere was found by \citet{eltzner2019smeary} where the Hessian of the risk vanishes at the minimizer, leading to higher order polynomials of the Taylor expansion to drive the growth around the minimizer.

Finally, the optimality of the rates Theorem \ref{thm:concentration} is difficult to assess because of its level of generality. However, its application to various scenarii provides the optimal parametric rates even for complex underlying spaces $\Msp$.

\subsection{Rates in Expectation}

When $p_n = 0$, the Lemma \ref{lem:bound_conditional_expectation} used to prove Theorem \ref{thm:concentration} also provides a bound on the expectation of $\vartheta^\beta(\hSc ; \Sc)$ of kind
\begin{equation*}
  \E \left[ \vartheta(\hSc ;\Sc)^\beta \right] \leq c_1 \left(\tau^{-1} \|a\|_{\psi_1} \right)^{\frac{\beta}{\beta-\alpha}} n^{-\frac{\alpha}{\beta - \alpha}},
\end{equation*} 
with $c_1 = c_1(\alpha,\beta)$ defined in Lemma \ref{lem:bound_conditional_expectation}.
Additional developments are however needed to obtain a bound on the expectation $\E\left[ \vartheta(\hSc ;\Sc)^r \right]$ for $r > \beta$ or whenever $p_n > 0$.
The total law of expectation gives
\begin{equation} \label{eq:total_law_expectation}
  \begin{split}
  \E\left[ \vartheta(\hSc;\Sc)^r \right] 
  & \leq \E \left[ \vartheta(\hSc;\Sc)^r  \, | \, \bX \in \Ac \right] + \E \left[ \vartheta(\hSc;\Sc)^r \indic_{\bX \notin \Ac} \right],
  \end{split}
\end{equation}
so that a bound can be obtained when the expectation of $\vartheta^r \indic_{\bX \notin \Ac}$ can be bounded. 
On the other hand, the expectation of $\vartheta^r$ conditionally on $\bX \in \Ac$ for $r > \beta$ can be obtained by integrating its tail \eqref{eq:concentration_rate}.
The complete computations are omitted in this paper as the applications detailed in Section \ref{sec:applications} will not heavily rely on this result.

\section{Proof of the Main Results} \label{sec:proof_main_result}

It is now time to prove the supporting results needed for the derivation of Theorem \ref{thm:concentration}.

\subsection{General Concentration Results} \label{sec:proof_main_result:general_concentration}

The first result is adapted from the work of \citet{maurer2021concentration}, a generalization of McDiarmid's inequality.

\begin{definition}[{\citet[Definition 1]{maurer2021concentration}}]
  Let $f : \Xsp^n \to \R$, a $\bx \in \Xsp^n$ and a random $\bX \in \Xsp^n$ with its components sampled independently.
  Let $\bY \in \Xsp^n$ an independent copy of $\bX$.
  The $k$-th centered conditional version of $f$ is the random variable
  \begin{equation*}
    \begin{aligned}
    f_k(\bX)(\bx) & = f(x_1,\ldots,x_{k-1},X_k,x_{k+1},\ldots,x_n) - \E\left[ f(x_1,\ldots,x_{k-1},Y_k,x_{k+1},\ldots,x_n) \right] \\
    &= \E\left[ f(x_1,\ldots,x_{k-1},X_k,x_{k+1},\ldots,x_n) - f(x_1,\ldots,x_{k-1},Y_k,x_{k+1},\ldots,x_n) | X_k \right].
    \end{aligned}
  \end{equation*}
\end{definition}

The conditional version $f_k$ describes the fluctuations of $f$ in its $k$-th component, given the other variables $(x_i)_{i \neq k}$.
Some properties of the conditional version of $f$ are extracted from \citep{maurer2021concentration} to improve its introduction.
Note that $f_k(\bX) : \bx \in \Xsp^n \mapsto f_k(\bX)(\bx) $ is a random function that does not depend on the $k$-th component of $\bx$.
Considering a norm $\| \cdot \|_{\psi}$ on random variables, the function $\| f_k(\bX) \|_{\psi} : \Xsp^n \to \R_+$ is defined by  $\| f_k(\bX) \|_{\psi}(\bx) = \| f_k(\bX)(\bx)\|_{\psi}$. 
Remark that $\| f_k(\bX)\|_{\psi}(\bX)$ is a random variable and that $f_k(\bX)(\bX) = f(\bX) - \E[f(\bX) | X_1,\ldots,X_{k-1},X_{k+1},\ldots X_n]$.
The quantity $ \|\| f_k(\bX) \|_{\psi} \|_{\infty}$ refers to its essential supremum. If $\bX$, $\bY$ are independent copies then $\| f_k (\bX) \|_{\psi} = \| f_k (\bY) \|_{\psi}$ and $\| f_k (\bX) \|_{\psi}(\bX)$ is independent and identically distributed to $\| f_k (\bX) \|_{\psi}(\bY)$.

The next result provides a McDiarmid-type inequality for function with sub-exponential conditional versions.

\begin{theorem}[{McDiarmid's inequality \cite[Theorem 4]{maurer2021concentration}}] \label{thm:bernstein_mcdiarmid}
  Let $f : \Xsp^n \to \R$ and $\bX \in \Xsp^n$ a random vector with independent components. Then, for any $\delta > 0$,
  \begin{equation} \label{eq:sub_gamma_macdiarmid}
    f(\bX) - \E f \leq e \left( 2\sigma \sqrt{\log\left(\frac{1}{\delta}\right)} + S \log\left(\frac{1}{\delta}\right) \right),
  \end{equation}
  with probability at least $1 - \delta$, with $\sigma^2 = \left\| \sum_k \| f_k(\bX) \|^2_{\psi_1}\right\|_{\infty}$ and $S = \max_k \left\| \| f_k(\bX) \|_{\psi_1} \right\|_{\infty}$.
\end{theorem}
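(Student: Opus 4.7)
The plan is to use the standard martingale method. Let $\Fc_k = \sigma(X_1,\ldots,X_k)$ for $0 \leq k \leq n$ be the natural filtration generated by $\bX$, and decompose
\[
f(\bX) - \E f = \sum_{k=1}^n D_k, \qquad D_k := \E[f(\bX) \, | \, \Fc_k] - \E[f(\bX) \, | \, \Fc_{k-1}],
\]
exhibiting the centered quantity as a sum of martingale differences. The first substantive step is to link $D_k$ to the centered conditional version $f_k(\bX)$. Because the $X_i$ are independent, conditioning on $\Fc_{k-1}$ or on $\Fc_k$ amounts to integrating out the ``future'' coordinates $X_{k+1},\ldots,X_n$; the only discrepancy is whether $X_k$ is kept fixed or independently resampled. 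This identifies $D_k$, conditionally on $\Fc_{k-1}$, with an expectation (in the tail coordinates) of the random function $f_k(\bX)$ evaluated at the current trajectory. Jensen's inequality applied to the Orlicz norm then yields the pointwise bound $\|D_k \, | \, \Fc_{k-1}\|_{\psi_1} \leq \bigl\| \|f_k(\bX)\|_{\psi_1}\bigr\|_\infty$, i.e., the quantity controlling $M$ and the summands defining $\sigma^2$.

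Next I would convert these conditional $\psi_1$ bounds into conditional sub-gamma MGF estimates: if $\|Z\|_{\psi_1} \leq \tau$ and $\E Z = 0$, then there exist universal constants $c_1,c_2 > 0$ such that $Z \in \mathrm{sub}\Gamma(c_1 \tau^2, c_2 \tau)$. Applied conditionally on $\Fc_{k-1}$, this gives, for $\lambda \in (0, 1/(c_2 \|f_k(\bX)\|_{\psi_1}))$ holding almost surely,
\[
\log \E\bigl[e^{\lambda D_k} \, \big| \, \Fc_{k-1}\bigr] \leq \frac{c_1 \lambda^2 \|f_k(\bX)\|_{\psi_1}^2}{2(1 - c_2 \lambda \|f_k(\bX)\|_{\psi_1})}.
\]
Iterating the tower property over $k=n,\ldots,1$ and taking the essential supremum gives a single sub-gamma MGF bound for $f(\bX) - \E f$ with variance factor proportional to $\bigl\|\sum_k \|f_k(\bX)\|_{\psi_1}^2\bigr\|_\infty$ and scale proportional to $\max_k \bigl\|\|f_k(\bX)\|_{\psi_1}\bigr\|_\infty$. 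Markov's inequality on the MGF, followed by optimization in $\lambda$, produces the sub-gamma tail $\P(f(\bX) - \E f > t) \leq \exp(-t^2/(2(\sigma^2 + Mt)))$ recalled in the Preliminaries, which inverts to $t \leq \sqrt{2\sigma^2 \log(1/\delta)} + M \log(1/\delta)$; the constant $e$ in the statement absorbs the universal factors $c_1, c_2$.

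The main obstacle is the pathwise control of the conditional MGF of $D_k$: the chaining via the tower property collapses cleanly only if the sub-gamma parameters governing $D_k$ can be bounded by deterministic quantities, not random ones depending on $\Fc_{k-1}$. This is exactly why the statement uses essential suprema in the definitions of $\sigma^2$ and $M$, and it is the reason one cannot simply replace them by expectations. A secondary but routine technicality is verifying that the universal $\psi_1$-to-sub-gamma conversion applied conditionally to a centered random variable still produces the correct constants, which amounts to reusing the standard tail-and-moment characterizations of $\psi_1$ random variables summarized in the Preliminaries and in \cite{vershynin2018high}.
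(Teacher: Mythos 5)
Your route is genuinely different from the paper's: the paper does not reprove the concentration inequality at all. Its proof simply observes that the argument behind \cite[Theorem 4]{maurer2021concentration} in fact establishes the MGF bound $\log\E\bigl[e^{t(f-\E f)}\bigr] \le t^2e^2\sigma^2/(1-teM)$ for $t < (eM)^{-1}$, i.e.\ $f-\E f \in \mathrm{sub}\Gamma_+(2e^2\sigma^2,eM)$, and then inverts this sub-gamma tail to obtain exactly the displayed high-probability form. You instead attempt a from-scratch proof by the martingale method, and this is where a genuine gap appears.

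The problem is the variance proxy. In your iteration of the tower property you must, as you yourself note, replace the conditional sub-gamma parameters of $D_k$ by deterministic quantities before the conditional expectation at step $k$ can be collapsed. Doing this separately at each step yields the variance factor $\sum_k \bigl\| \|f_k(\bX)\|_{\psi_1}\bigr\|_{\infty}^2$ (a sum of essential suprema), not the stated $\sigma^2 = \bigl\|\sum_k \|f_k(\bX)\|_{\psi_1}^2\bigr\|_{\infty}$ (an essential supremum of a sum), which is smaller in general. Your Jensen step only controls $D_k$ given the past by an average of $\|f_k(\bX)\|_{\psi_1}$ over the \emph{future} coordinates; keeping these random quantities inside the iteration prevents the factorization, and bounding them coordinatewise by their essential suprema destroys the sup-of-the-sum structure. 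Recovering the stated $\sigma^2$ requires an argument that keeps $\sum_k \|f_k(\bX)\|_{\psi_1}^2$ together as a single random variable, which is why the cited proof does not go through martingale differences; so as written your outline proves a weaker theorem (it happens to suffice for the paper's later use, where $\|f_k(\bX)\|_{\psi_1}\le\|b\|_{\psi_1}$ uniformly and the two quantities coincide, but not for the statement itself). A secondary issue: the factor $e$ is not a generic universal constant to be ``absorbed''; it comes from the explicit computation $\|Z\|_p \le p\|Z\|_{\psi_1}$ together with $p^p/p!\le e^p$, giving $\log\E e^{\lambda Z} \le \lambda^2 e^2\|Z\|_{\psi_1}^2/(1-\lambda e\|Z\|_{\psi_1})$, and you would need to carry out this bookkeeping to match the constants in the statement.
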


\begin{proof}
  Since this result is not exactly stated in this form, a short proof is provided in the Appendix \ref{appendix:add_proof}.
\end{proof}

The quantities $f_k$, $\sigma^2$ and $S$ are the substitutes to the differences, variance and scale, respectively, for the standard McDiarmid's inequality. 

\subsection{Proof of Theorem \ref{thm:mcdiarmid_combes}} 

The Theorem \ref{thm:mcdiarmid_combes} can now be proved using Theorem \ref{thm:bernstein_mcdiarmid}.

\begin{proof}
  Let $m = \E[f(\bX) \,|\, \bX \in \Bc]$ and $t > 0$. By the total law of probability,
  \begin{equation*}
    \P\left( f(\bX) - m > t\right) \leq \P(f(\bX) - m > t, \bX \in \Bc) + \P(\bX \notin \Bc).
  \end{equation*}
  The inequality of Theorem \ref{thm:bernstein_mcdiarmid} cannot be directly applied to $\P(f(\bX) - m, \bX \in \Bc)$ since the components of $\bX$ are no longer independent conditionally on $\bX \in \Bc$.

Similarly to \citep{combes2015extension}, the main idea of the proof is to define an extension of $f$ to the whole domain that preserves its conditional versions. 
Let $B : \Xsp^n \times \Xsp^n \to \R_+$ be the pseudometric defined by
  \begin{equation*} 
  B(\bx,\by) = \sum_{i=1}^n b(x_i,y_i),
  \end{equation*}
  for all $\bx,\by \in \Xsp^n$. 
  The extension $\bar{f} : \Xsp^n \to \R$ of $f$ will be defined by
  \begin{equation*}
  \bar{f}(\bx) =  \displaystyle \inf_{\by \in \Bc} f(\by) + B(\bx,\by),
  \end{equation*}
  for all $\bx \in \Xsp^n$.
This extension was also used in the Kirszbraun's theorem \citep{kirszbraun1934zusammenziehende,hiriart1980extension} to extend the range of Lipschitz functions.

\begin{lemma} 
    The extension function $\bar{f}$ satisfies 
  \begin{itemize}
    \item $\bar{f}(\bx) = f(\bx)$ for all $\bx \in \Bc$,
    \item $\displaystyle | \bar{f}(\bx) - \bar{f}(\by) | \leq B(\bx, \by)$ for all $\bx, \by \in \Xsp^n$.
  \end{itemize}
  \end{lemma}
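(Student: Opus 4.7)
The plan is to prove the two claims by the standard McShane-type extension argument, exploiting that $B$ is itself a pseudometric on $\Xsp^n$ (being a finite sum of pseudometrics $b(x_i, y_i)$ on each coordinate, $B$ is nonnegative, symmetric, vanishes on the diagonal, and satisfies the triangle inequality).

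For the first bullet, I would argue two inequalities. Taking $\by = \bx$ in the infimum gives $\bar{f}(\bx) \le f(\bx) + B(\bx,\bx) = f(\bx)$, using that $b(x_i,x_i)=0$ for a pseudometric. For the reverse, let $\bx \in \Bc$; then for every $\by \in \Bc$, the hypothesis \eqref{eq:mcdiarmid_condition} yields $f(\bx) - f(\by) \le B(\bx,\by)$, i.e.\ $f(\by) + B(\bx,\by) \ge f(\bx)$. Taking the infimum over $\by \in \Bc$ gives $\bar{f}(\bx) \ge f(\bx)$.

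For the second bullet, I would use the triangle inequality of $B$ on $\Xsp^n$. Fix $\bx,\by \in \Xsp^n$ and any $\bz \in \Bc$. Then
\begin{equation*}
\bar{f}(\bx) \le f(\bz) + B(\bx,\bz) \le f(\bz) + B(\by,\bz) + B(\bx,\by).
\end{equation*}
Taking the infimum over $\bz \in \Bc$ on the right yields $\bar{f}(\bx) \le \bar{f}(\by) + B(\bx,\by)$. Since the roles of $\bx$ and $\by$ are symmetric, the reverse inequality follows in the same way, producing $|\bar{f}(\bx) - \bar{f}(\by)| \le B(\bx,\by)$.

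There is no real obstacle here; the only subtlety is ensuring that the infimum defining $\bar f$ is finite (which follows from $\Bc \neq \emptyset$, guaranteed by $p=\P(\bX\notin\Bc)\le 3/4 < 1$, so $\Bc$ has positive measure and in particular is nonempty), and checking carefully that the hypothesis \eqref{eq:mcdiarmid_condition} is applied only to pairs in $\Bc$ where it is known to hold. The argument does not require $\bx \in \Bc$ in the second bullet, which is exactly the point of the extension: the Lipschitz-type bound is promoted from a conditional statement on $\Bc$ to an unconditional one on all of $\Xsp^n$, while preserving the values on $\Bc$.
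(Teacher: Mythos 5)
Your proof is correct and is exactly the standard McShane/Kirszbraun-type extension argument that the paper relies on (the paper states this lemma without proof, citing the classical extension); both bullets are established the same way the construction is intended to work, using $B(\bx,\bx)=0$, the hypothesis \eqref{eq:mcdiarmid_condition} on $\Bc$, and the triangle inequality for $B$ inherited from the pseudometric $b$. The only small refinement worth noting is that nonemptiness of $\Bc$ by itself only rules out $\bar f=+\infty$; that $\bar f$ is also bounded below follows from combining your two bullets (the Lipschitz bound together with $\bar f=f$ on the nonempty set $\Bc$), so your argument is complete as stated when read in the extended reals.
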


  The proof of the theorem continues using this extension. 
  Let $M = \E[\bar{f}(\bX)]$. By definition of $\bar{f}$,
  \begin{equation*}
  \P(f(\bX) - m > t, \bX \in \Bc) \leq  \P(\bar{f}(\bX) - m > t) = \P\left(\bar{f}(\bX) - M > t + m - M\right).
  \end{equation*}
  
  It remains to find a bound on $M - m$. The expectation $M$ can be decomposed as follows
  \begin{equation*}
    M = \E[\bar{f}(\bX) \indic_{\bX \in \Bc}] + \E[\bar{f}(\bX) \indic_{\bX \notin \Bc}] \leq \P(\bX \in \Bc) m + \E[\bar{f}(\bX) \indic_{\bX \notin \Bc}].
  \end{equation*}
  By definition of $\bar{f}(\bx) = \inf_{\by \in \Bc} f(\by) + B(\bx,\by) \leq \E[ f(\bY) + B(\bx,\bY) \, | \, \bY \in \Bc]$, where $\bY \in \Xsp^n$ is independent of $\bX$, so that
  \begin{equation*}
    \E[\bar{f}(\bX) \indic_{\bX \notin \Bc}] \leq m \P(\bX \notin \Bc) + \E\left[ \E[B(\bX,\bY) \, | \, \bY \in \Bc] \indic_{\bX \notin \Bc} \right].
  \end{equation*}
  Therefore,
  \begin{equation*}
    M - m \leq \frac{1}{\P(\bY \in \Bc)} \E\left[B(\bX,\bY) \indic_{\bX \notin \Bc, \bY \in \Bc}\right],
  \end{equation*}
  where the order of integration on $B$ has been interchanged by Tonelli's theorem. 
  An application of Cauchy-Schwartz inequality gives
  \begin{equation*}
    \begin{split}
      \E\left[B(\bX,\bY) \indic_{\bX \notin \Bc, \bY \in \Bc}\right] & \leq \| B \|_{L^2} \P\left( \bX \notin \Bc, \bY \in \Bc \right)^{1/2}, \\
      & \leq 2 n \| b \|_{\psi_1} \P\left( \bX \notin \Bc, \bY \in \Bc \right)^{1/2}.
      \end{split}
    \end{equation*}
  The independence of $\bX$ and $\bY$ leads to 
  \begin{equation*}
    M - m \leq 2 n \| b \|_{\psi_1} \sqrt{\frac{p}{1-p}}.
  \end{equation*}
  In the regime where $p \leq 3/4$, it holds that $p^{\frac{1}{2}} (1-p)^{-\frac{1}{2}} \leq 2\sqrt{p}$, leading to 
  \begin{equation*}
    M - m \leq 4 n \|b\|_{\psi_1} \sqrt{p},
  \end{equation*}
  which implies
  \begin{equation*}
    \P\left(\bar{f}(\bX) - M > t + m - M\right) \leq \P\left(\bar{f}(\bX) - M > t - 4 n \|b\|_{\psi_1} \sqrt{p} \right).
  \end{equation*}

  To conclude the Theorem \ref{thm:bernstein_mcdiarmid} is used on $\bar{f}$ since
  \begin{equation*}
    \begin{aligned}
    \bar{f}_k(\bX)(\bx) &= \E\left[ \bar{f}(x_1,\ldots,x_{k-1},X_k,x_{k+1},\ldots,x_n) - \bar{f}(x_1,\ldots,x_{k-1},X_k',x_{k+1},\ldots,x_n) | X_k \right] \\
    & \leq \E\left[ b(X_k,X_k') | X_k \right],
    \end{aligned}
  \end{equation*}
  so that $\| \bar{f}_k(\bX)(\bx) \|_{\psi_1} \leq \| \E\left[ b(X_k,X_k') | X_k \right] \|_{\psi_1} \leq \| b \|_{\psi_1}$ by \cite[Lemma 6]{maurer2021concentration}.
  Therefore,
  \begin{equation*}
  \bar{f}(\bX) - m \leq 4 n \|b\|_{\psi_1} \sqrt{p} + e\left( 2\sigma \sqrt{\log\left(\frac{1}{\delta}\right)} + M\log\left(\frac{1}{\delta}\right) \right),
  \end{equation*}
  with probability at least $1-p-\delta$ where $\sigma^2 = n \| b \|_{\psi_1}^{2} $ and $S = \| b \|_{\psi_1}$.
\end{proof}

\begin{remark}
  In Theorem \ref{thm:mcdiarmid_combes}, the function $b$ has to be a pseudometric while Theorem \ref{thm:bernstein_mcdiarmid} does not assume any further structure on $b$.
  It is not clear whether Theorem \ref{thm:mcdiarmid_combes} holds dropping the pseudometric assumption.
\end{remark}

\subsection{Proof of Lemma \ref{lem:bound_conditional_expectation}} \label{sec:proof_main_result:conditional_expectation}

The goal of this section is to prove the Lemma \ref{lem:bound_conditional_expectation} providing the bound on the conditional expectation $\E[\vartheta(\hSc;\Sc)^\beta \, | \, \bX \in \Ac]$.

To start this section, it is important to describe some properties of $\Ac$.
First, the subset $\Ac$, as the empirical risk $\hJJ$, is invariant under reordering of the components of the vector $\bX$. This property will be used several times in the proof.
The section of $\Ac$ is defined for $\bX \in \Ac$ by $I(\bX) = \{ x \in \Xsp \, | \, (x, X_2, \ldots, X_n) \in \Ac \}$. The subset $\Ac$ and its section satisfy the following properties
\begin{equation} \label{eq:prop_Ac_I}
\begin{split}
 \{ \bX \in \Ac \} \cap \{ \bX' \in \Ac \} &= \{\bX \in \Ac \} \cap \{ Y \in I(\bX)\}, \\
 \{ \bX \in \Ac \} \cap \{ \bX' \notin \Ac \} &= \{\bX \in \Ac \} \cap \{ Y \notin I(\bX)\}, 
\end{split}
\end{equation}
recalling that $\bX' = (Y,X_2,\ldots,X_n)$ where $Y$ is an independent copy of $X_1$. 
The events $\bX \in \Ac$ and $\bX' \notin \Ac$ are negatively correlated, meaning that $\P(\bX \in \Ac, \bX' \notin \Ac) \leq \P(\bX \in \Ac) \P(\bX' \notin \Ac)$, see Lemma \ref{lem:negative_correlation}. This property quantifies the intuitive idea that, once $\bX$ fall in $\Ac$ with large probability, the probability that the set of points $\bX'$ (that is $\bX$ with one sample point swapped) fall outside $\Ac$ is small.

\begin{proof}[{Proof of Lemma \ref{lem:bound_conditional_expectation}}]
  The proof starts with the usual decomposition for any $\hphi \in \hSc$
  \begin{equation*}
    \begin{aligned}
      \tau \vartheta( \hphi, \Sc )^{\beta} & \leq \JJ(\hphi) - \JJ_*  & (\text{using Assumption \ref{ass:local_holder_error} and } \bX \in \Ac) \\
      & = \E_Y[l(\hphi,Y)] + \RR(\hphi) + \hJJ_* - \hJJ_* - \JJ_* \\
      & = \E_Y[l(\hphi,Y)] - \hff(\hphi) + \hJJ_* - \JJ_* \\
      & = \frac{1}{n} \sum_{i=1}^n \E_Y[l(\hphi,Y) - l(\hphi,X_i)] + \hJJ_* - \JJ_*.
    \end{aligned}
  \end{equation*}

  Remark that, 
  \begin{equation*}
    \begin{aligned}
        \E\left[ \sup_{\hphi \in \hSc} \frac{1}{n} \sum_{i=1}^n \E_Y[l(\hphi,Y) - l(\hphi,X_i)] \, | \, \bX \in \Ac \right] & \leq \frac{1}{n} \sum_{i=1}^n \E\left[ \sup_{\hphi \in \hSc}  \E_Y[l(\hphi,Y) - l(\hphi,X_i)]  \, | \, \bX \in \Ac \right] \\
        &= \E\left[ \sup_{\hphi \in \hSc}  \E_Y[l(\hphi,Y) - l(\hphi,X_1)]  \, | \, \bX \in \Ac \right],
    \end{aligned}
  \end{equation*}
since $\Ac$ is invariant under permutations of the components,

  The standard argument of \eqref{eq:estimation_error_decomposition_stability} cannot be directly applied here since $Y$ and $X_1$ do not share the same distribution anymore due to the conditioning on $\bX \in \Ac$. 
  It is however possible to split the expectation on $Y$ into the domains (i) $Y \in I(\bX)$ where $Y$ will be swappable with $X_1$ and (ii) $Y \notin I(\bX)$ where $Y$ and $X_1$ do not share the same distribution. Fortunately, the event $Y \notin I(\bX)$ will have a small measure. 
  This leads to the following decomposition
  \begin{equation*}
    \begin{aligned}
      \tau \E\left[ \vartheta(\hSc;\Sc)^{\beta}  \, | \, \bX \in \Ac \right] & \leq E_1 + E_2, \quad \text{with } E_i = \E\left[\sup_{\hphi \in \hSc} e_i \, | \, \bX \in \Ac\right], \\
      \text{and} \quad  e_1 = \E_Y&\left[\left( l(\hphi,Y) - l(\hphi,X_1) \right) \indic_{Y \in I(\bX)} \right], \\ 
      e_2 = \E_Y&\left[\left( l(\hphi,Y) - l(\hphi,X_1) \right) \indic_{Y \notin I(\bX)} \right] + \hJJ_* - \JJ_*. 
    \end{aligned}
  \end{equation*}
  
  \begin{description}
    \item[Bounding $E_1$] 
    Since $\sup_{\phi} \E[g(\phi,X)] \leq \E[\sup_\phi g(\phi,X)]$ for any measurable $g : \MM \times \Xsp \to \R$, the term $E_1$ can further be bounded by
    \begin{equation*}
    \begin{aligned}
        E_1 &\leq \E_{\bX} \left[ \E_Y \left[ \sup_{\hphi \in \hSc}  (l(\hphi,Y) - l(\hphi,X_1)) \indic_{Y \in I(\bX)} \right] \, | \,  \bX \in \Ac \right].
      \end{aligned}
    \end{equation*}
    The next step of the proof consists in bringing out the stability of the minimizers of the empirical risk in $E_1$. 
Let $\tff$, $\tJJ$ and $\tSc$ be the empirical risks and the minimizers associated to $\bX' = (Y,X_1, \ldots, X_n)$. 
Remarking that,
\begin{equation*}
l(\hphi,Y)- l(\hphi,X_1) = n (\tff(\hphi) - \hff(\hphi)) = n (\tJJ(\hphi) - \hJJ(\hphi)) = n (\tJJ(\hphi) - \hJJ_*),
\end{equation*}
since the terms in $\RR$ cancel out, and using the fact that $\bX$ and $\bX'$ have the same distribution gives
\begin{equation*}
\begin{aligned}
  \E_{\bX} \left[ \E_Y \left[\tJJ_* \indic_{Y \in I(\bX)} \right] \indic_{\bX \in \Ac}\right] =  \E_{\bX} \left[ \E_Y \left[ \hJJ_* \indic_{Y \in I(\bX)} \right] \indic_{\bX \in \Ac}\right]. && \text{(see Lemma \ref{lem:expectation_empirical_risk_equal})}
\end{aligned}
\end{equation*}
This leads to the bound
\begin{equation*}
\begin{aligned}
    E_1 & \leq \frac{n}{1-p_n} \E_{\bX,Y} \left[  \frac{1}{n} a(Y,X_1) \vartheta(\hSc ; \tSc)^\alpha  \indic_{\bX \in \Ac,  \bX' \in \Ac} \right],
  \end{aligned}
\end{equation*}
with an argument similar to Lemma \ref{lem:hJJ_error_bound}. Note that the order of integration has been interchanged by Tonelli's theorem. Therefore, using Lemma \ref{lem:hJJ_error_bound} shows that
\begin{equation} 
    \begin{aligned}
    E_1 & \leq \frac{1}{1-p_n}\E\left[ a(Y,X_1) \left(2 \tau^{-1} n^{-1} a(Y,X_1)\right)^{\frac{\alpha}{\beta-\alpha}} \indic_{\bX \in \Ac, \bX' \in \Ac} \right]  \\
    & \leq \frac{(2 \tau^{-1} n^{-1})^{\frac{\alpha}{\beta-\alpha}}}{1-p_n} \left\| a^{\frac{\beta}{\beta-\alpha}} \right\|_{L^2} \sqrt{ \P(\bX \in \Ac, \bX' \in \Ac)}, \label{eq:proof_E1_1}
  \end{aligned}
\end{equation}
using Cauchy-Schwartz inequality. By definition of $p_n$,
    \begin{equation}\label{eq:proof_E1_2}
    \frac{\sqrt{\P(\bX \in \Ac, \bX' \in \Ac)}}{1-p_n} \leq \frac{1}{\sqrt{1-p_n}} \leq 2,
    \end{equation}
    in the regime where $p_n \leq 3/4$. 
    Furthermore, since $\frac{\beta}{\beta-\alpha} \geq 1$
    \begin{equation}\label{eq:proof_E1_3}
      \left\| a^{\frac{\beta}{\beta-\alpha}} \right\|_{L^2} = \left\| a \right\|_{L^{\frac{2\beta}{\beta-\alpha}}}^{{\frac{\beta}{\beta-\alpha}}} \leq \left(\frac{2\beta}{\beta-\alpha} \| a \|_{\psi_1}\right)^{\frac{\beta}{\beta-\alpha}}.
    \end{equation}
    Combining \eqref{eq:proof_E1_1}, \eqref{eq:proof_E1_2} and \eqref{eq:proof_E1_3} leads to
     \begin{equation*}
      \begin{aligned}
          E_1 & \leq 2 (2 \tau^{-1} n^{-1})^{\frac{\alpha}{\beta-\alpha}} \left(\frac{2\beta}{\beta-\alpha} \| a \|_{\psi_1}\right)^{\frac{\beta}{\beta-\alpha}}.
        \end{aligned}
      \end{equation*}

    \item[Bounding $E_2$]
    The second term emerges from the domain where $Y$ and $X_1$ do not have the same distribution. It can be bounded by
   \begin{equation*}
      E_2 \leq 2^{\max(0,\alpha-1)+2} \tau^{-\frac{\alpha}{\beta-\alpha}} \|a\|_{\psi_1}^{\frac{\beta}{\beta-\alpha}} \sqrt{p_n} \left(\left(\tau \|a\|_{\psi_1}^{-1}\right)^{\frac{\alpha}{\beta-\alpha}} \diam(\Sc)^\alpha + c_2(\alpha,\beta) \right),
    \end{equation*}
    using Lemma \ref{lem:bound_E2}, with $c_2(\alpha,\beta) = 2  \max\left( \frac{4\alpha}{\beta - \alpha},1 \right)^\frac{\alpha}{\beta-\alpha}$.
  \end{description}
  Combining the bounds on $E_1$ and $E_2$ gives the desired result.
\end{proof}

The bound on $E_2$ makes the diameter of $\Sc$ appear, requiring the set of minimizers to be bounded. This is not too restrictive in practice, but it is not clear if it is an artifact of the proof or a price to be paid when splitting the domain of integration of $Y$

\section{Applications} \label{sec:applications}

In this paragraph, the main results of Section \ref{sec:main_results} are applied on the examples presented in the introduction.

\subsection{Barycenters in Hadamard Spaces} \label{sec:barycenter_hadamard}

First introduced by \cite{frechet1948elements}, Fréchet means as defined in \eqref{eq:frechet_means} have found numerous applications in geometry \citep{sturm2003probability,villani2003topics}, statistics \citep{pennec2006intrinsic,pelletier2005informative}, data science \citep{beg2005computing,peyre2019computational,bigot2019data} and more.
Note that the behavior of the empirical counterpart has been studied in specific scenarii for example in Riemannian manifold \citep{bhattacharya2003large}, in Wasserstein spaces \citep{ahidar2020convergence,le2022fast} or recently in non-positively curved metric spaces \citep{brunel2023concentration}.
This section is dedicated to Fréchet means defined on Hadamard spaces.

Let $(\Msp, \vartheta) = (\Xsp, \rho)$ a Hadamard space defined by  
\begin{definition} A Hadamard space is a complete geodesic metric space $(\Xsp, \rho)$
fulfilling
    \begin{equation} \label{eq:charact_hadamard}
      \rho(\phi,x)^2 - \rho(\psi,x)^2 - \rho(\phi,y)^2 + \rho(\psi,y)^2 \leq 2 \rho(x,y) \rho(\phi,\psi),
    \end{equation}
for all $x,y,\phi,\psi \in \Xsp$.
\end{definition}
The property \eqref{eq:charact_hadamard} characterizes $\textrm{CAT}(0)$-spaces \cite[Corollary 3]{berg2008quasilinearization} \cite[Proposition 2.4]{sturm2003probability}, that is geodesic metric spaces with non-positive curvature in the sense of Alexandrov.
The class of Hadamard spaces describes a large variety of spaces such as Euclidean spaces, Hilbert spaces, complete metric trees, complete simply-connected Riemannian manifolds with non-positive sectional curvature and more. An introduction to Hadamard spaces was proposed by \citet{bacak2014computing}.
 
The computation of the barycenter of $\mu$ consists in minimizing $\JJ(\phi) = \int_{\Xsp} \rho^2(\phi,x) d\mu(x)$ which corresponds to $l(\phi,x) = \rho(\phi,x)^2$ in \eqref{eq:opt_J}.

It is known that the barycenter of a measure on a Hadamard space exists and is unique provided that there exists a $\phi \in \Xsp$ such that $\E_{X \sim \mu}[\rho(\phi,X)^2] < +\infty$ \cite[Proposition 4.3]{sturm2003probability}.
Furthermore, \citet[Proposition 4.4]{sturm2003probability} showed that Assumption \ref{ass:local_holder_error} holds with $\JJ_0 = +\infty$, $\tau = 1$ and $\beta = 2$.

Assuming that $\| \rho \|_{\psi_1}$ is finite implies in particular that both moments $\E_{X \sim \mu}[\rho(\phi,X)^2]$ and $\E_{X \sim \mu_n}[\rho(\phi,X)^2]$ are finite for some $\phi \in \Xsp$ so that all the assumptions are satisfied as detailed by the next lemma.

\begin{lemma} \label{lem:assumption_hadamard}
  Assuming that $\|\rho\|_{\psi_1}< +\infty$, then this example satisfies Assumptions \labelcref{ass:existstence_minimizer,ass:local_holder_error,ass:convergence_bassin,ass:lojasewicz_modulus,ass:holder_l} with 
  \begin{center}
  \begin{tabular}{c c c c c c c c c}
  \toprule
  $\JJ_0$ & $\beta$ & $\tau$ & $\alpha$ & $ a $ & $\|a\|_{\psi_1}$ & $\eta$ & $\kappa$ & $\iota$ \\ \midrule
  $+\infty$ & 2 & 1 & 1 & $2\rho$ & $2\|\rho\|_{\psi_1}$ & 0 & 0 & 0\\
  \bottomrule
\end{tabular}
\end{center}
\end{lemma}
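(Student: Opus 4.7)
The plan is to verify the five assumptions in turn, each of which reduces either to a known property of Hadamard spaces or to a direct consequence of the moment bound $\|\rho\|_{\psi_1} < +\infty$. No single step should be difficult; the main work is translating the cited results of Sturm to the notational framework established in Sections \ref{sec:assumptions} and \ref{sec:preliminaries}.

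First, I would establish existence and uniqueness (Assumption \ref{ass:existstence_minimizer}). Since $\|\rho\|_{\psi_1} < +\infty$ implies $\E[\rho(X,Y)^2] < +\infty$ for independent copies $X,Y \sim \mu$, Fubini-Tonelli yields some $\phi_0 \in \Xsp$ with $\E_{X \sim \mu}[\rho(\phi_0,X)^2] < +\infty$. Proposition 4.3 of \cite{sturm2003probability} then ensures existence and uniqueness of the Fréchet mean. The same reasoning applied to $\mu_n$, whose second moment at any $X_i$ is trivially finite, yields $\hphi$ for $\mu_n$-almost every sample, with the measurability of the minimizer being routine.

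Next, Assumption \ref{ass:local_holder_error} with $\beta = 2$, $\tau = 1$ and $\JJ_0 = +\infty$ is exactly the variance inequality of \cite[Proposition 4.4]{sturm2003probability}, which is valid for Fréchet means in Hadamard spaces. Assumption \ref{ass:convergence_bassin} is then vacuous with $\eta \equiv 0$ because $\JJ_0 = +\infty$. Assumption \ref{ass:lojasewicz_modulus} follows from the same variance inequality applied to the empirical setting: the empirical measure $\mu_n$ is itself a probability measure on the Hadamard space $(\Xsp,\rho)$, so Proposition 4.4 applies verbatim to $\hJJ$ with the same constant, giving $\htau \geq 1 > \tau/2$ deterministically, and therefore $\kappa \equiv 0$.

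Finally, Assumption \ref{ass:holder_l} is a direct reading of the characterization \eqref{eq:charact_hadamard}: with $l(\phi,x) = \rho(\phi,x)^2$ the quantity $l(\phi,x) - l(\psi,x) - l(\phi,y) + l(\psi,y)$ is precisely the left-hand side of \eqref{eq:charact_hadamard}, so the quadruple inequality holds with $a(x,y) = 2\rho(x,y)$, $\alpha = 1$, $\Upsilon = \Msp$, hence $\iota \equiv 0$; the function $a$ is a pseudometric (in fact a metric) and $\|a\|_{\psi_1} = 2\|\rho\|_{\psi_1} < +\infty$ by hypothesis. There is no genuine obstacle in the argument; the lemma is a clean bookkeeping exercise, packaging the defining inequalities of Hadamard spaces \cite{sturm2003probability} into the assumptions of the present framework.
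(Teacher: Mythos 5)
Your proposal is correct and follows essentially the same route the paper takes in Section \ref{sec:barycenter_hadamard}: Sturm's Propositions 4.3 and 4.4 give existence/uniqueness and the variance inequality ($\beta=2$, $\tau=1$, $\JJ_0=+\infty$) for both $\mu$ and the empirical measure $\mu_n$, which makes $\eta$, $\kappa$ vanish, while the quadruple inequality \eqref{eq:charact_hadamard} characterizing Hadamard spaces yields Assumption \ref{ass:holder_l} with $a=2\rho$, $\alpha=1$, $\Upsilon=\Msp$, $\iota=0$ and $\|a\|_{\psi_1}=2\|\rho\|_{\psi_1}$. Nothing is missing.
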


\begin{corollary} \label{cor:bound_hadamard}
  Assuming that $\|\rho\|_{\psi_1}< +\infty$ then, for any $\delta > 0$
  \begin{equation*}
    \rho(\phi_*,\widehat{\phi}) \leq 16\|\rho\|_{\psi_1} \left( n^{-\frac{1}{2}} + 2\sqrt{\frac{1}{n} \log\left(\frac{1}{\delta}\right)} + \frac{1}{n}\log\left(\frac{1}{\delta}\right) \right),
  \end{equation*}
  with probability at least $1-\delta$ and 
  \begin{equation*}
    \E\left[ \rho(\phi_*,\widehat{\phi})^2 \right] \leq 2^8 \| \rho \|_{\psi_1}^2 n^{-1}.
  \end{equation*} 
\end{corollary}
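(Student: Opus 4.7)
The plan is to directly apply the two main results of Section \ref{sec:main_results}, reading off all the parameters from Lemma \ref{lem:assumption_hadamard}. That lemma supplies $\beta=2$, $\alpha=1$, $\tau=1$, $a=2\rho$ (so $\|a\|_{\psi_1}=2\|\rho\|_{\psi_1}$), and most importantly $\JJ_0=+\infty$, which makes the level-set $[\JJ\leq \JJ_0]$ the whole space and trivializes Assumptions \ref{ass:convergence_bassin}, \ref{ass:lojasewicz_modulus}, \ref{ass:holder_l} with $\eta=\kappa=\iota=0$. Consequently $p_n=0$. I would also note that under $\|\rho\|_{\psi_1}<+\infty$, the second moment condition $\E[\rho(\phi,X)^2]<+\infty$ holds for some $\phi$, so by \cite[Proposition 4.3]{sturm2003probability} the Fréchet means of $\mu$ and $\mu_n$ are unique, $\Sc=\{\phi_*\}$, $\hSc=\{\hphi\}$, and therefore $\vartheta(\hSc;\Sc)=\rho(\phi_*,\hphi)$.

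For the in-probability bound, I would invoke the Corollary of Theorem \ref{thm:concentration} specialized to $(\beta,\alpha)=(2,1)$. With $p_n=0$ the remainder $Cp_n^{1/4}$ drops out, and the probability of validity becomes $1-p_n-\delta=1-\delta$. Substituting $L=\tau^{-1}\|a\|_{\psi_1}=2\|\rho\|_{\psi_1}$ turns the leading constant into $8L=16\|\rho\|_{\psi_1}$, which reproduces the stated inequality.

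For the expectation bound, since $p_n=0$ the event $\bX\in\Ac$ occurs almost surely, so the conditional expectation estimate of Lemma \ref{lem:bound_conditional_expectation} is in fact unconditional. Plugging in $\beta/(\beta-\alpha)=2$, $\alpha/(\beta-\alpha)=1$, and $c_1=(4\beta/(\beta-\alpha))^{\beta/(\beta-\alpha)}=8^2=64$ yields
\[
\E\bigl[\rho(\phi_*,\hphi)^2\bigr]\leq L^2 c_1 n^{-1}=4\|\rho\|_{\psi_1}^2\cdot 64\cdot n^{-1}=2^8\|\rho\|_{\psi_1}^2 n^{-1}.
\]

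There is no real obstacle: the heavy lifting is contained in Section \ref{sec:main_results} and in Lemma \ref{lem:assumption_hadamard}, and this corollary is a clean specialization that consists essentially of constant-chasing. The only points requiring any care are (i) invoking uniqueness of Fréchet means to pass from $\vartheta(\hSc;\Sc)$ to $\rho(\phi_*,\hphi)$, and (ii) checking that $p_n=0$ propagates cleanly through both the probability-statement and the conditional-expectation bound, so the $\diam(\Sc)$-dependent constant $C$ in Theorem \ref{thm:concentration} is never activated.
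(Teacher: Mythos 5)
Your proposal is correct and is essentially the paper's own (implicit) argument: Corollary \ref{cor:bound_hadamard} is obtained exactly by plugging the constants of Lemma \ref{lem:assumption_hadamard} (so $p_n=0$, $L=\tau^{-1}\|a\|_{\psi_1}=2\|\rho\|_{\psi_1}$) into the $\beta=2$, $\alpha=1$ corollaries of Theorem \ref{thm:concentration} and of Lemma \ref{lem:bound_conditional_expectation}, with uniqueness of the Fréchet means (via \cite[Proposition 4.3]{sturm2003probability}) identifying $\vartheta(\hSc;\Sc)$ with $\rho(\phi_*,\hphi)$. The constant-chasing ($8L=16\|\rho\|_{\psi_1}$ and $c_1L^2=64\cdot4\|\rho\|_{\psi_1}^2=2^8\|\rho\|_{\psi_1}^2$) matches the stated bounds.
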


Similar results are obtained by \citet{le2022fast} and \citet{brunel2023concentration}.

\begin{remark}
In the Euclidean case, the quantity $\| \rho \|_{\psi_1} $ can be bounded by
\begin{equation*}
\| \rho \|_{\psi_1} \leq 2 \| \|X \| \|_{\psi_1} \leq 2 \sqrt{d} \|X\|_{\psi_1},
\end{equation*}
(see Preliminaries) so that the condition $\|\rho\|_{\psi_1} < + \infty$ is met for sub-exponential random variables.
In Hilbert spaces, this condition is however more subtle to grasp. Some examples of random functions fulfilling this condition are given by \citet[Proposition 5.3]{lei2020convergence}
\end{remark}

\begin{remark}
When $\Msp = \Xsp$ is further restricted to be an Euclidean space $\Msp = \Xsp = \R^d$ and endowed with the usual $\ell_2$ norm, the bound of Corollary \ref{cor:bound_hadamard} can be compared with straight calculations.
In this context, the unique barycenter is $\phi_* = \E_\mu[X]$ (in the Pettis integral sense) and the empirical one $\widehat{\phi} = (1/n)\sum_{i=1}^n X_i$.
The concentration property of sub-exponential variables gives that for any $\delta > 0$
\begin{equation*}
  \| \phi_* - \widehat{\phi}\|_2 \leq 2 \sqrt{d} \|X\|_{\psi_1} \left( \frac{1}{n}\log\left( \frac{2}{\delta}\right)+ \sqrt{\frac{2}{n} \log\left( \frac{2}{\delta}\right)} \right),
\end{equation*}
holds with high probability $1-\delta$.
Moreover, further computations show that
\begin{equation} \label{eq:barycenter_euclidean}
  \E \left[\| \phi_* - \widehat{\phi}\|_2^2 \right]= \frac{1}{n} \E \left[ \| X - \E[X] \|_2^2 \right] \leq \frac{16}{n} \| \|X\| \|_{\psi_1}^2 = \frac{16d}{n} \| X \|_{\psi_1}^2.
\end{equation}
The concentration in probability and expectation obtained via Corollary \ref{cor:bound_hadamard} are therefore optimal up to universal constants.
\end{remark}

\subsection{Largest Eigenvector of Covariance Matrices} \label{sec:application_eigenvector}

Principal components analysis (PCA) \citep{pearson1901liii,hotelling1933analysis} is a dimension reduction technique. 
It consists in computing the eigensystems of the covariance matrix of the data to identify the subspaces that contain the largest variability.
In the majority of high-dimensional analyses, the data is represented as points $(x_i)_{i=1}^n$ sampled from the unknown probability measure $\mu$ in $\R^d$. 
The covariance matrix $\Cov(\mu)$ can thus be estimated only from the samples represented by the discrete measure $\mu_n = \frac{1}{n}\sum_{i=1}^n \delta_{x_i}$.
One important question concerns the stability of the subspaces of $\Cov(\mu_n)$ with respect to those of $\Cov(\mu)$. 
The case of the leading direction of variability, which is the largest eigenvector, will be analyzed in this section.

Let $\R^d$ endowed with the Euclidean distance and equipped with a centered probability measure $\mu$.
The covariance matrix of $\mu$ is defined as $\Cov(\mu) = \E_{X \sim \mu} \left[ X X^T\right]$.

Let $\lambda_1 \geq \ldots \geq \lambda_m $ be the eigenvalues of $\Cov(\mu)$ and $\phi_*$ one of the two eigenvectors associated to the largest eigenvalue.
By the Courant–Fischer min-max principle, $\phi_*$ is defined by
\begin{equation}\label{eq:min-max_largest}
  \phi_* \in \argmax_{\phi \in \mathbb{S}^{d-1}} \langle \Cov(\mu) \phi, \phi \rangle.
\end{equation}
This problem fits in the formalism of \eqref{eq:opt_J} with $\Msp = \mathbb{S}^{d-1}$ endowed with the great circle distance $\vartheta(\phi,\psi) = \arccos \langle \phi, \psi\rangle_{\R^d}$, $\Xsp = \R^d$ and $l(\phi, x) = - \langle \phi, x\rangle^2$.
In this context, the set of minimizers of $\JJ$ is $\Sc = \{ \pm \phi_* \}$ and $\vartheta(\phi,\Sc) = \arccos |\langle \phi, \phi_* \rangle|$.

\begin{lemma} \label{lem:assumption_least_eigen} Assuming $\|X\|_{\psi_2} < +\infty$ then this example satisfies Assumptions \labelcref{ass:existstence_minimizer,ass:local_holder_error,ass:convergence_bassin,ass:lojasewicz_modulus,ass:holder_l} with 
\begin{center}
  \begin{tabular}{c c c c c c c c c}
  \toprule
  $\JJ_0$ & $\beta$ & $\tau$ & $\alpha$ & $ a $ & $\|a\|_{\psi_1}$ & $\eta$ & $\kappa$ & $\iota$ \\ \midrule
  $+\infty$ & 2 & $\frac{4}{\pi^2}(\lambda_{1} - \lambda_{2})$ & 1 & $a$ & $4d\|X\|_{\psi_2}^2$ & $0$ & $2 d \exp(-cn)$ & 0\\
  \bottomrule
\end{tabular}
\end{center}
with
\begin{equation*}
  \begin{aligned}
    a(x,y) &= 2\left( \| x \|_2^2 + \| y \|_2^2 \right), \\
    c &= \frac{(\lambda_1-\lambda_2)^2}{32 e^2 (d+1)^2 \|Y\|_{\psi_2}^4 + 8e(d+1)\|Y\|_{\psi_2}^2(\lambda_1-\lambda_2)}.
  \end{aligned}
\end{equation*}
\end{lemma}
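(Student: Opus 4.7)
The plan is to verify each assumption one-by-one, exploiting the fact that on $\mathbb{S}^{d-1}$ the risk reduces to a quadratic form in the covariance, namely $\JJ(\phi) = -\langle \Cov(\nu)\phi,\phi\rangle$ after the computation of Lemma~\ref{lem:cost_J_eigenvector}. Existence of minimizers (Assumption~\ref{ass:existstence_minimizer}) is immediate: $\mathbb{S}^{d-1}$ is compact and the cost $l$ is polynomial, so both $\JJ$ and every $\hJJ$ attain their minima, and measurable selections exist by standard arguments.

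For the local Hölder error bound (Assumption~\ref{ass:local_holder_error}), I would diagonalize $\Cov(\nu)$ in an orthonormal basis $(\phi_i)_{i=1}^d$ of eigenvectors with eigenvalues $\lambda_1 \geq \ldots \geq \lambda_d$, write $\phi = \sum_i c_i \phi_i$ with $\sum c_i^2 = 1$, and observe that
\begin{equation*}
\JJ(\phi) - \JJ_* = \lambda_1 - \sum_i c_i^2 \lambda_i = \sum_{i\geq 2} c_i^2 (\lambda_1 - \lambda_i) \geq (\lambda_1-\lambda_2)(1-c_1^2).
\end{equation*}
Since $\vartheta(\phi,\Sc) = \arccos|c_1| \in [0,\pi/2]$, the identity $1-c_1^2 = \sin^2(\vartheta(\phi,\Sc))$ together with Jordan's inequality $\sin\theta \geq (2/\pi)\theta$ on $[0,\pi/2]$ yields the claimed $\tau = (4/\pi^2)(\lambda_1-\lambda_2)$ with $\beta = 2$ and $\JJ_0 = +\infty$. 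Assumption~\ref{ass:convergence_bassin} is then trivially satisfied with $\eta \equiv 0$ since the basin is the whole space.

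For the smoothness condition (Assumption~\ref{ass:holder_l}), I would expand the quadruple difference algebraically. Writing $l(\phi,(y,z)) = \langle y,\phi\rangle^2 - \langle y,\phi\rangle\langle z,\phi\rangle$ and using the factorisations $\langle y,\phi\rangle^2-\langle y,\psi\rangle^2 = \langle y,\phi-\psi\rangle\langle y,\phi+\psi\rangle$ and the product-rule for the cross term, Cauchy-Schwarz gives $|l(\phi,x)-l(\psi,x)| \leq (2\|y\|_2^2+2\|y\|_2\|z\|_2)\|\phi-\psi\|_2$. Summing the analogous bound for $x'$ and using $\|\phi-\psi\|_2 \leq \vartheta(\phi,\psi)$ on the unit sphere gives the stated $a(x,x')$ with $\Upsilon=\Msp$ (so $\iota\equiv 0$). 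The bound on $\|a\|_{\psi_1}$ follows from $\||Y|_2^2\|_{\psi_1} = \||Y|_2\|_{\psi_2}^2 \leq d\|Y\|_{\psi_2}^2$ and the product bound $\|UV\|_{\psi_1}\leq \|U\|_{\psi_2}\|V\|_{\psi_2}$ on the cross terms, each of the four summands contributing at most $2d\|Y\|_{\psi_2}^2$.

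The main obstacle, and the only genuinely probabilistic step, is Assumption~\ref{ass:lojasewicz_modulus}. Repeating the eigenvalue argument applied to the empirical covariance $\Cov(\nu_n)$ shows that $\htau \geq (4/\pi^2)(\hat\lambda_1-\hat\lambda_2)$, where $\hat\lambda_i$ are the eigenvalues of $\Cov(\nu_n)$. By Weyl's inequality, if $\|\Cov(\nu_n)-\Cov(\nu)\|_{\mathrm{op}} \leq (\lambda_1-\lambda_2)/4$, then $\hat\lambda_1-\hat\lambda_2 \geq (\lambda_1-\lambda_2)/2$ and hence $\htau \geq \tau/2$. It remains to bound the deviation of the empirical covariance in operator norm. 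The matrix $Z_i = Y_iY_i^T - \E[YY^T]$ (plus mean-correction terms that are handled analogously) is a centred self-adjoint random matrix whose $\psi_1$-norm is controlled by $d\|Y\|_{\psi_2}^2$, so a matrix Bernstein inequality for sub-exponential summands yields
\begin{equation*}
\P\bigl(\|\Cov(\nu_n)-\Cov(\nu)\|_{\mathrm{op}} > t\bigr) \leq d\exp\!\left(-\frac{n t^2}{2(\sigma^2+Mt)}\right)
\end{equation*}
with $\sigma^2 \asymp d^2\|Y\|_{\psi_2}^4$ and $M \asymp d\|Y\|_{\psi_2}^2$ (with explicit constants tracked to match the $32e^2$ and $8e$ in the statement). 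Substituting $t = (\lambda_1-\lambda_2)/4$ gives the announced $\kappa(n) = d\exp(-cn)$ with the stated value of $c$, completing the verification.
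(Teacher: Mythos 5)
Your verification is correct and, for four of the five assumptions, follows essentially the same route as the paper: compactness of $\mathbb{S}^{d-1}$ for existence, $\eta\equiv 0$ because $\JJ_0=+\infty$, Weyl's inequality plus the matrix Bernstein bound of Lemma~\ref{lem:sample_covariance_subexp} at level $t=(\lambda_1-\lambda_2)/4$ for Assumption~\ref{ass:lojasewicz_modulus} (your constants indeed reproduce the stated $c$), and the algebraic expansion of the quadruple difference via $(a^2-b^2)=(a-b)(a+b)$, Cauchy--Schwarz, $\|\phi-\psi\|_2\leq\vartheta(\phi,\psi)$ and $\|\,\|Y\|_2\,\|_{\psi_2}^2\leq d\|Y\|_{\psi_2}^2$ for Assumption~\ref{ass:holder_l}. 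Where you genuinely diverge is the error bound (Assumption~\ref{ass:local_holder_error}): the paper works intrinsically on the sphere, parametrizing points through the exponential map, expanding $\JJ(\exp_\phi(\xi))$ along a geodesic, invoking a separate linear-algebra lemma (Lemma~\ref{lem:lower_bound_eigenvalue}) and the bound $\sin^2(t)/t^2\geq 4/\pi^2$ on $[0,\pi/2]$; you instead diagonalize $\Cov(\nu)$, write $\JJ(\phi)-\JJ_*=\sum_{i\geq 2}c_i^2(\lambda_1-\lambda_i)\geq(\lambda_1-\lambda_2)(1-c_1^2)=(\lambda_1-\lambda_2)\sin^2\vartheta(\phi,\Sc)$ and finish with Jordan's inequality. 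Your spectral argument is more elementary, avoids the Riemannian computation and the auxiliary lemma, and gives the identical constant $\tau=\frac{4}{\pi^2}(\lambda_1-\lambda_2)$; it also transfers verbatim to $\hJJ$ for Assumption~\ref{ass:lojasewicz_modulus}, just as the paper's does. Two cosmetic points: the function $a$ should carry the indicator $\indic_{x\neq x'}$ so that it is a pseudometric, and your aside about ``mean-correction terms'' in the Bernstein step is glossed over exactly as in the paper (whose Lemma~\ref{lem:sample_covariance_subexp} is stated for the uncentered second-moment matrix), so neither issue is a gap relative to the paper's own proof.
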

\begin{proof}
  See Appendix \ref{sec:proof_eigenvector}.
\end{proof}

Furthermore since $\diam(\Msp) = \pi$, the following bounds hold.
\begin{corollary} \label{cor:least_eigen}
 Assuming $\| X \|_{\psi_2} < + \infty$ then, for any $\delta >0$
\begin{equation*}
\vartheta(\hSc ; \Sc) \leq 8 L \left(n^{-\frac{1}{2}} + 2\sqrt{\frac{1}{n} \log\left(\frac{1}{\delta}\right)}+\frac{1}{n}\log\left(\frac{1}{\delta}\right)\right) + C_1 d^{\frac{1}{4}}e^{-\frac{c}{4}n},
\end{equation*}
with probability at least $1 - e^{-cn} - \delta$ and
\begin{equation*}
  \E\left[ \vartheta(\hSc ; \Sc)^2 \right] \leq 2^6 L^2 n^{-1} + C_2 d^{\frac{1}{2}} e^{-\frac{c}{2} n},
\end{equation*}
with the constant $c$ defined in Lemma \ref{lem:assumption_least_eigen} and where
\begin{equation*}
  \begin{aligned}
    L = \frac{2d \pi^2 \|X\|_{\psi_2}^2}{(\lambda_1-\lambda_2)}, &\quad C_1 = 14 L + 2\sqrt{L \pi},& \quad C_2 = 4(L\pi + 8L^2) + \pi^2.
  \end{aligned}
\end{equation*}
\end{corollary}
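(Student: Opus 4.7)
The plan is to invoke the Corollary following Theorem~\ref{thm:concentration} (the specialization to $\beta=2$, $\alpha=1$) together with the Corollary following Lemma~\ref{lem:bound_conditional_expectation}, and then to combine the latter with the total law of expectation. Lemma~\ref{lem:assumption_least_eigen} does all the genuine work of verifying the structural assumptions with explicit constants, so the proof of Corollary~\ref{cor:least_eigen} reduces to careful bookkeeping.

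First I would compute the constant $L = \tau^{-1}\|a\|_{\psi_1}$: from the table in Lemma~\ref{lem:assumption_least_eigen}, $\tau = \frac{4}{\pi^2}(\lambda_1-\lambda_2)$ and $\|a\|_{\psi_1} = 8d\|Y\|_{\psi_2}^2$, so
\begin{equation*}
L = \frac{\pi^2}{4(\lambda_1-\lambda_2)} \cdot 8d\|Y\|_{\psi_2}^2 = \frac{2d\pi^2\|Y\|_{\psi_2}^2}{\lambda_1-\lambda_2},
\end{equation*}
matching the stated expression. Since the set of minimizers is $\Sc = \{\pm\phi_*\}$ and the metric on $\mathbb{S}^{d-1}$ is the great circle distance, the two points are antipodal, so $\diam(\Sc) = \pi$. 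Substituting into $C = 14L + 2\sqrt{L\diam(\Sc)}$ from the $\beta=2$, $\alpha=1$ corollary of Theorem~\ref{thm:concentration} yields exactly $C_1 = 14L + 2\sqrt{L\pi}$.

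For the in-probability bound I would apply that same corollary with $\delta$ replaced by $e^{-t}$. The probability of leaving the good set $\Ac$ is controlled by $p_n \leq \eta(n) + \kappa(n) + \iota(n) = d\exp(-cn)$, so the residual contribution is $C_1 p_n^{1/4} \leq C_1 d^{1/4} e^{-cn/4}$, and the failure probability is $p_n + e^{-t}$.

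For the expectation bound I would start from the total law of expectation,
\begin{equation*}
\E\!\left[\vartheta(\widehat\phi,\Sc)^2\right] \leq \E\!\left[\vartheta(\widehat\phi,\Sc)^2 \,\middle|\, \bX \in \Ac\right] + \E\!\left[\vartheta(\widehat\phi,\Sc)^2 \indic_{\bX\notin\Ac}\right].
\end{equation*}
The conditional expectation is bounded using the Corollary after Lemma~\ref{lem:bound_conditional_expectation} by $2^6 L^2 n^{-1} + 4(L\pi + 8L^2)\sqrt{p_n}$. For the remainder, the key observation is that $\vartheta(\widehat\phi,\Sc) \leq \diam(\mathbb{S}^{d-1}) = \pi$ almost surely, so the second term is at most $\pi^2 p_n \leq \pi^2 \sqrt{p_n}$. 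Summing both contributions and using $\sqrt{p_n} \leq d^{1/2} e^{-cn/2}$ delivers the bound with $C_2 = 4(L\pi + 8L^2) + \pi^2$. There is no substantive obstacle at this stage: the real difficulty is already concealed inside the proof of Lemma~\ref{lem:assumption_least_eigen}, in particular the concentration of $\htau$ that yields the $\kappa(n) = d\exp(-cn)$ bound; the corollary itself is purely a matter of plugging verified constants into the general theorems.
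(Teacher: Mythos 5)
Your proposal is correct and follows essentially the same route as the paper: verify the constants via Lemma \ref{lem:assumption_least_eigen}, plug them (with $\diam(\Sc)=\pi$ and $p_n=\kappa(n)=d e^{-cn}$) into the $\beta=2,\alpha=1$ corollaries of Theorem \ref{thm:concentration} and Lemma \ref{lem:bound_conditional_expectation}, and handle the expectation by the total law of expectation with the almost-sure bound $\vartheta\leq\pi$, which yields exactly $C_1$, $C_2$, and the stated rates. The only cosmetic difference is that your failure probability $p_n+e^{-t}=d e^{-cn}+e^{-t}$ is the precise form of what the corollary writes as $1-e^{-cn}-e^{-t}$.
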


\begin{remark}
  The usual way to obtain bounds between $\phi_*$ and $\widehat{\phi}$ is to use matrix perturbation tools such as the Davis-Kahan Theorem \cite[Theorem 2]{yu2015useful} which gives
\begin{equation*}
\inf_{\phi_* \in \Sc}\| \widehat{\phi} - \phi_* \|_2 \leq \sqrt{2} \sin(\arccos |\langle \phi_*,\widehat{\phi}\rangle|) \leq 2 \sqrt{2} \frac{\| \Cov(\mu_n) - \Cov(\mu)\|_{2\to 2}}{\lambda_{1} - \lambda_2}.
\end{equation*}
The deviation of $\Cov(\mu_n)$ from $\Cov(\mu)$ can then be bounded using, for example, the matrix Bernstein inequality of Lemma \ref{lem:sample_covariance_subexp} giving, for all $t > 0$,
\begin{equation*}
\begin{aligned}
  \P\left(\inf_{\phi_* \in \Sc}\| \widehat{\phi} - \phi_* \|_2 > t \right) && \leq 2 d \exp\left(\frac{-nt^2}{2(L^2 + L t) }\right).
\end{aligned}
\end{equation*}
On the other hand, the bound of Corollary \ref{cor:least_eigen} implies that
\begin{equation*}
  \P\left(\inf_{\phi_* \in \Sc}\| \widehat{\phi} - \phi_* \|_2  > t-8Ln^{-\frac{1}{2}}-C_1e^{-\frac{c}{4}n} \right) \leq 2 d \exp(-cn) + \exp\left(\frac{-nt^2}{256L^2 + 16L t}\right).
\end{equation*}
for all $t > 0$, which are similar, up to universal constants.
\end{remark}

\begin{remark}
  This analysis can also be applied to the eigenvector associated to the smallest eigenvalue and replacing $\lambda_1 - \lambda_2$ by $\lambda_{d-1} - \lambda_d$.
  The general framework developed in this paper also allows to cover eigenvector associated to eigenvalues with multiplicity larger than 1.
  The analysis can be extended to a whole subspace of dimension $r \leq d$ by setting $\Msp = V_{r}(\R^d)$ the Stiefel manifold of dimension $r$ in $\R^d$.
\end{remark}

\subsection{LASSO}

The goal of supervised learning is to predict outputs $V \in \Vsp \subseteq \R$ from inputs $Y \in \Ysp \subseteq \R^d$ given the knowledge of $n$ pairs $x_i = (y_i,v_i) \in \Ysp \times \Vsp$. 
In the usual machine learning framework, these pairs are assumed to be independently sampled from a probability distribution $\mu$ on $\Xsp = \Ysp \times \Vsp$.

The marginal $\mu_\Ysp$, of $\mu$ along $\Ysp$, models the randomness in the inputs.
A common model for the randomness in the outputs is to assume that there exists a measurable function $g_* : \Ysp \to \Vsp$ such that $V = g_*(Y) + \epsilon$ where $\epsilon$ is the noise of the model represented by the marginal $\mu_\Vsp$. For instance, the noiseless case $\epsilon = 0$ is obtained when $\mu_\Vsp$ is the Dirac in $g_*(X)$ while the additive Gaussian noise one is obtained when $\mu$ is the convolution between $\mu_\Ysp$ and the centered Gaussian distribution.

The best predictor, in the least-square sense, is defined as the minimizer of
\begin{equation} \label{eq:MSE}
  g_0 \in \argmin_{g : \Ysp \to \Vsp} \E_\mu[(V - g(Y))^2],
\end{equation}
where $g$ is picked in the set of measurable functions from $\Ysp \to \Vsp$. When $Y$ and $V$ are square-integrable, the optimum $g_0$, also called the Bayes predictor, is defined by $g_0(Y) = \E[V \,|\, Y]$.
Even with this closed form, this problem is intractable for a computer due to the infinite dimension of the space of measurable functions from $\Ysp \to \Vsp$. 

This issue is usually circumvented by parameterizing the space of solutions as $\mathcal{H} = \{ g \, | \, g(y) = \langle \phi, \theta(y) \rangle, \phi \in \R^m \}$ where $\phi \in \R^m$ and $\theta : \Ysp \to \R^m$ are the feature vectors and solving 
\begin{equation} \label{eq:MSE_parameterized}
  \phi_0 \in \argmin_{\phi \in \R^m} \ff(\phi) = \E_\mu[(V - \langle \theta(Y), \phi \rangle)^2]
\end{equation}
in place of \eqref{eq:MSE}. The class $\mathcal{H}$ has to be large enough to approximate well the Bayes predictor. This question is fundamental but out of the scope of this paper \citep[see][]{pinkusNwidthsApproximationTheory1985,tsybakovIntroductionNonparametricEstimation2009}.
Still, the problem \eqref{eq:MSE_parameterized} cannot be solved since the joint distribution $\mu$ is not known and only samples drawn from it are accessible. 
A direct minimization of $\hff$, the empirical version of \eqref{eq:MSE_parameterized}, might however result in overfitting, so that the penalized empirical minimization problem is solved
\begin{equation} \label{eq:empirical_lasso}
  \hphi \in \argmin_{\phi \in \R^m} \hJJ(\phi) = \frac{1}{2 n} \|\Theta \phi - V \|_2^2 + \lambda \| \phi \|_1,
\end{equation}
 where $V \in \R^n$ being the vector of outputs $V = (v_i)_{i=1}^n$ and $\Theta \in \R^{n \times m}$ is the design matrix with the vectors $(\theta(y_i)^T)_{i=1}^n$ as rows.
The problem $\eqref{eq:empirical_lasso}$ is referred to as LASSO (Least Absolute Shrinkage and Selection Operator) by \citet{tibshirani1996regression}. The $\ell_1$ regularization further promotes the sparsity of the minimizers $\hphi$ of $\hJJ$. It has been thoroughly studied in relation to the problem of sparse signal recovery \citep[see for example][for a comprehensive treatment and bibliographic details]{candes2006stable,donoho2006most,wainwright2019high}.  
In particular, a fundamental question in sparse recovery related to the content of this paper is to understand the convergence of the minimizers of the empirical LASSO \eqref{eq:empirical_lasso} towards those of the approximated Bayes predictor \eqref{eq:MSE_parameterized}. High probability recovery guarantees are for example obtained assuming that $\phi_0$ is sparse and that the design matrix $\Theta$ satisfies the so-called restricted eigenvalue condition \citep{raskutti2010restricted,rudelson2012reconstruction,wainwright2019high}. This condition imply a quadratic growth of $\hff$ around its minimizers in directions that are ``sufficiently sparse''.
\citet{koltchinskii2009sparsity} carried another analysis on the excess risk $\ff(\hphi) - \ff(\phi_0)$ by introducing the intermediate problem
\begin{equation} \label{eq:exact_lasso}
  \phi_* \in \argmin_{\phi \in \R^m} \JJ(\phi) = \ff(\phi) + \lambda \| \phi \|_1,
\end{equation}
and by separately studying the bias $\ff(\phi_*) - \ff(\phi_0)$ and the random error $\ff(\hphi) - \ff(\phi_*)$. Finally, the excess risk between $\ff(\hphi) - \ff(\phi_0)$ is obtained by balancing both errors. While the bias part is bounded assuming some sparsity of the solution $\phi_0$ and a local strong convexity of $l$, the random error is bounded using empirical process theory. Note that asymptotic convergence between \eqref{eq:empirical_lasso} and \eqref{eq:exact_lasso} has also been studied by \citet{knight2000asymptotics}. 

In this section, the random error between $\hSc$ and $\Sc$ is studied using the formalism of \eqref{eq:opt_J} with $\Xsp = \Ysp \times \Vsp$, $l : \Xsp \to \R_+$ defined by $l(\phi, (y,v)) = \frac{1}{2} (\langle \phi, \theta(y) \rangle - v)^2$, $\Msp = \R^m$ endowed with the Euclidean distance and $\RR = \lambda \| \cdot\|_1$.

The next result shows that all the assumptions required to apply Lemma \ref{lem:bound_conditional_expectation} and Theorem \ref{thm:concentration} are satisfied.
Proving the local error bound for $\JJ$ and $\hJJ$ is the most challenging part. Since the function $\JJ$ and $\hJJ$ are convex piecewise polynomials of degree 2, they both satisfy a local error bound of Assumption \ref{ass:local_holder_error} with $\beta = 2$ \cite[Corollary 9]{bolte2017error} for any $\JJ_0 > \JJ_*$ with an adequate $\tau(\JJ_0)$. This powerful result however does not specify the values of the constants $\JJ_0$ and $\tau$ which are important to derive proper bounds and to check Assumption \ref{ass:lojasewicz_modulus}. 

The following analysis will rely on the polyhedral geometry of the $\ell_1$ ball for which error bounds analyses are available through the Hoffman's lemma \citep{hoffman2003approximate,guler2010foundations}.

\begin{definition}[Hoffman's error bound]
Let $P$ be a polyhedron defined by $P = \{ x \in \R^q : A x \leq a \}$ for some $A \in \R^{p \times q}$ and $a \in \R^p$. Let $O = \{ x \in \R^q : E x = e\}$ for some $E \in \R^{r \times q}$ and $e \in \R^r$.
Assume that $O \cap P \neq \emptyset$. Then there exists a constant $H \geq 0$, depending only on the pair $(O,P)$ such that 
\begin{equation*}
  \inf_{y \in O \cap P} \| x - y \| \leq H \| Ex - e\|, \quad \forall x \in P.
\end{equation*}
\end{definition}
Alternatively, the Hoffman constant of the pair $(O,P)$ can be written as
\begin{equation} \label{eq:def_hoffman}
  H = \max_{\substack{ I \subseteq \{ 1, \ldots, r + p\} \\ C_I \text{ full row rank}}} \frac{1}{\sigma_{|I|}(C_I)},
\end{equation}
where $C_I \in \R^{|I| \times l}$ refers to the submatrix built from the rows of indexes in $I$ of the matrix $C = [E^T, A^T]^T \in \R^{(r+p) \times q}$ \citep{pena2021new}.

The polyhedral geometry of the $\ell_1$ ball is finally exploited using the following result and will be key to prove Assumptions \ref{ass:local_holder_error} and \ref{ass:lojasewicz_modulus}.

\begin{lemma}[{\citet[Lemma 10]{bolte2017error}}] \label{lem:hoffman}
  Let $\LL : \R^q \to \R$ defined by $\LL(\phi) = \frac{1}{2}\|A \phi -b\|_2^2 + \lambda\|\phi\|_1$ where $\lambda >0$, $b \in \R^p$ and $A$ is a matrix of size $p \times q$. 
  Let $R_* > 0$ be chosen such that the minimizers $\phi_*$ of $\LL$ satisfy $\| \phi_* \|_1 \leq R_*$. Fix $R > R_*$, then
  \begin{equation*}
    \tau \vartheta(\phi,S)^2 \leq \LL(\phi) - \LL_*,
  \end{equation*}
  for all $\phi \in \R^q$ such that $\|\phi\|_1 \leq R$ where
  \begin{equation} \label{eq:tau_lasso}
    \tau = \left(4 H^2\left[1 + \lambda R + (R \|A\| + \|b\|)(4 R \|A\| + \|b\|)\right]\right)^{-1},
  \end{equation}
  and $H = H(A,\lambda) > 0$ is the Hoffman's constant \eqref{eq:def_hoffman} of the matrix 
  \begin{equation} \label{eq:hauffman_matrix}
  C(A,\lambda) = 
  \begin{pmatrix}
    E & -1_{\R^{2^q \times 1}}\\
    0_{\R^{1\times q}} & 1 \\
    A & 0_{\R^{p\times 1}} \\
    0_{\R^{1\times q}} & \lambda
  \end{pmatrix} \in \R^{(2^q+1+p+1)\times (q+1)},
  \end{equation}
  where $E$ is the matrix of size $2^q \times q$ with rows being all the possible distinct vectors of the form $(\pm 1,\ldots, \pm 1)$.
\end{lemma}

\begin{lemma}\label{lem:assumption_lasso}
Assuming that $\| \theta(Y) \|_{\psi_2} < + \infty$ and $\| V \|_{\psi_2} < + \infty$ then this example satisfies Assumptions \labelcref{ass:existstence_minimizer,ass:local_holder_error,ass:convergence_bassin,ass:lojasewicz_modulus,ass:holder_l} with 
  \begin{center}
    \begin{tabular}{c c c c c c c c c}
  \toprule
  $\JJ_0$ & $\beta$ & $\tau$ & $\alpha$ & $ a $ & $\|a\|_{\psi_1}$ & $\eta$ & $\kappa$ & $\iota$\\ \midrule
  $\E[V^2]$ & 2 & \eqref{eq:tau_lasso_used} & 1 & \eqref{eq:a_lasso} & \eqref{eq:a_lasso_subexp} & $2 e^{-c_1 n}$ & $2 m e^{-c_2n}$ & $2 e^{-c_1 n}$\\
  \bottomrule
\end{tabular}
\end{center}
where the constants are defined with $A = \E[\theta(Y) \theta(Y)^T]^{\frac{1}{2}} \in \R^{m \times m}$ as 
\begin{equation} \label{eq:tau_lasso_used}
\begin{split}
  \tau & = \left(4 H^2\left[1 + \JJ_0 + \left(\JJ_0 \|A\| \lambda^{-1} + \sqrt{\JJ_0}\right)\left(4 \JJ_0 \|A\| \lambda^{-1} + \sqrt{\JJ_0}\right)\right]\right)^{-1},
\end{split}
\end{equation}
with $H$ being the Hoffman constant of the matrix $C(A,\lambda)$ defined in \eqref{eq:hauffman_matrix} and
\begin{equation} \label{eq:a_lasso_subexp}
  \|a\|_{\psi_1} = 4Rm \|\theta(Y)\|_{\psi_2}^2 + 4 \sqrt{m}\|\theta(Y)\|_{\psi_2} \|V\|_{\psi_2}.
\end{equation}
The constants $c_1$ and $c_2$ are defined by 
\begin{equation*}
  \begin{aligned}
  c_1 &= \frac{\|V\|_2^4}{2(e^2 \| V\|_{\psi_2}^4 + e \| V\|_{\psi_2}^2\|V\|_2^2)}, \\
  c_2 &= \frac{c^2}{2(e^2(m+1)^2\|\theta(Y)\|_{\psi_2}^4 + e (m+1)\|\theta(Y)\|_{\psi_2}^2 c)}, \\
  c &= (\sqrt{2}-1) \min\left(\frac{1}{\sqrt{2} H^2}, \|A\|^2 \right).
  \end{aligned}
\end{equation*}
\end{lemma}
\begin{proof}
See Section \ref{sec:proof_lasso} for the proof and the various expressions.
\end{proof}

\begin{corollary} \label{cor:lasso}
  Assuming that $\| \theta(Y) \|_{\psi_2} < + \infty$ and $\| V \|_{\psi_2} < + \infty$ then, for any $\delta > 0$
  \begin{equation*}
    \vartheta(\hSc ;  \Sc) \leq 8 L \left(n^{-\frac{1}{2}} + 2 \sqrt{\frac{1}{n}\log\left(\frac{1}{\delta}\right)}+\frac{1}{n} \log\left(\frac{1}{\delta}\right)\right) + 14 L p_n^\frac{1}{4},
  \end{equation*}
  with probability at least $1-p_n-\delta$ and
  \begin{equation*}
    \E\left[\vartheta(\hSc ;  \Sc)^2\right] \leq  L^2 2^6 n^{-1} + C_2 p_n^{-\frac{1}{2}},
  \end{equation*}
  where $C_2 = 2^5 + (2\|V\|^2_{\psi_2}+ 2^5\|V\|^4_{\psi_2})\lambda^{-2}$ and $L = \tau^{-1}\|a\|_{\psi_1}$.
\end{corollary}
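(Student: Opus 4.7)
The plan is to apply Theorem \ref{thm:concentration} and Lemma \ref{lem:bound_conditional_expectation} as turnkey results, since Lemma \ref{lem:assumption_lasso} has already checked that every hypothesis is met for this setting with exponents $\beta = 2$ and $\alpha = 1$. Concretely, because the risk is convex with an $\ell_1$ regularizer, the argument for the Löjasiewicz inequality in Lemma \ref{lem:assumption_lasso} produces a single minimizer $\phi_*$, so $\diam(\Sc) = 0$ and the constant $14L + 2\sqrt{L\diam(\Sc)}$ appearing in the $(\beta,\alpha) = (2,1)$-corollary of Theorem \ref{thm:concentration} collapses to exactly $14L$. Plugging in $L = \tau^{-1}\|a\|_{\psi_1}$ with the $\tau$ and $a$ from Lemma \ref{lem:assumption_lasso} then reproduces the stated high probability bound. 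This first step is essentially mechanical.

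For the bound in expectation I would decompose via the total law
\begin{equation*}
\E\bigl[\|\hphi - \phi_*\|_2^2\bigr] \leq \E\bigl[\|\hphi - \phi_*\|_2^2 \,\big|\, \bX \in \Ac\bigr] + \E\bigl[\|\hphi - \phi_*\|_2^2 \, \indic_{\bX \notin \Ac}\bigr].
\end{equation*}
The first term on the right is already bounded by $2^6 L^2 n^{-1} + 2^5 L^2 \sqrt{p_n}$, again from the $\beta=2$, $\alpha = 1$ corollary to Lemma \ref{lem:bound_conditional_expectation} applied with a unique minimizer. The $2^5$ term absorbs into the $C_2$ constant in the statement, while the $2^6 L^2 n^{-1}$ is the headline rate.

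The main obstacle, and the only genuine work to do, is to control the second term uniformly on $\bX \notin \Ac$ without losing the sub-Gaussian scaling. I would use the LASSO structure itself: testing $\hJJ$ and $\JJ$ against $\phi = 0$ gives the crude but deterministic bounds
\begin{equation*}
\lambda \|\hphi\|_1 \leq \hJJ(\hphi) \leq \hJJ(0) = \frac{1}{2n}\sum_{i=1}^n V_i^2, \qquad \lambda \|\phi_*\|_1 \leq \JJ(\phi_*) \leq \frac{1}{2}\E_{\mu_\Vsp}[V^2],
\end{equation*}
so $\|\hphi - \phi_*\|_2^2 \leq 2(\|\hphi\|_1^2 + \|\phi_*\|_1^2) \leq (2\lambda^2)^{-1}\bigl((n^{-1}\sum V_i^2)^2 + \E[V^2]^2\bigr)$. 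A Cauchy–Schwarz step then yields
\begin{equation*}
\E\bigl[\|\hphi - \phi_*\|_2^2 \, \indic_{\bX \notin \Ac}\bigr] \leq \sqrt{\E\bigl[\|\hphi - \phi_*\|_2^4\bigr]} \, \sqrt{p_n},
\end{equation*}
and the fourth moment is handled by sub-Gaussianity of $V$: since $\|V^2\|_{\psi_1} = \|V\|_{\psi_2}^2$, the bounds $\E[V^4] \lesssim \|V\|_{\psi_2}^4$ and $\E[(n^{-1}\sum V_i^2)^2] \lesssim \|V\|_{\psi_2}^4$ follow from the moment-growth property recalled in Section \ref{sec:preliminaries}. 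Tracking constants carefully (the $2\|V\|_{\psi_2}^2$ summand handles the cross-term with $\E[V^2]^2$, the $2^5\|V\|_{\psi_2}^4$ summand handles the fourth-moment contribution) produces precisely the advertised $C_2 = 2^5 + (2\|V\|_{\psi_2}^2 + 2^5 \|V\|_{\psi_2}^4)\lambda^{-2}$. I expect that keeping the numerical constants aligned with the statement, rather than the analysis itself, will be the finicky part.
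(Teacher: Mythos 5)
Your proposal matches the paper's proof: the high-probability bound is the same direct application of Theorem \ref{thm:concentration} with the constants of Lemma \ref{lem:assumption_lasso} and $\diam(\Sc)=0$ from uniqueness of the minimizer, and the expectation bound uses the same total-law split, the unique-minimizer corollary of Lemma \ref{lem:bound_conditional_expectation} for the conditional term, and the same crude optimality bounds $\lambda\|\hphi\|_1 \leq \hJJ(0)$, $\lambda\|\phi_*\|_1 \leq \JJ(0)$ combined with Cauchy--Schwarz and sub-Gaussian moments of $V$ for the off-event term. The only (immaterial) difference is that you apply Cauchy--Schwarz to the full fourth moment while the paper handles the deterministic $\phi_*$ part with $p_n$ directly and reserves Cauchy--Schwarz for the random part; the constants work out the same up to the paper's own bookkeeping.
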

\begin{proof}
See Appendix \ref{sec:proof_lasso}.
\end{proof}

\citet{koltchinskii2009sparsity} obtained bounds either on the excess risk or on the $\ell_2$ distance between the functions $\langle \theta(\cdot), \phi_* \rangle$ and $\langle \theta(\cdot), \hphi \rangle$. 
The result of Corollary \ref{cor:lasso} shows that concentration bounds on the estimation error between \eqref{eq:exact_lasso} and \eqref{eq:empirical_lasso} in terms of distance between their minimizers are also accessible.
This additional granularity needed a local error bound that is proved via an intricate analysis of the geometry of the LASSO problem. 
The recent result of Lemma \ref{lem:hoffman} was leveraged to obtain a local error bound involving the Hoffman's constant of a matrix defined by the regularization parameter and the covariance matrix of the feature vectors \eqref{eq:hauffman_matrix}.
The most challenging part of Lemma \ref{lem:assumption_lasso} was to prove the Assumption \ref{ass:lojasewicz_modulus}, that is the concentration of $\htau$ towards $\tau$. This step involves the concentration of the Hoffman constant \eqref{eq:def_hoffman} of the matrix $C\left(\left( \frac{1}{n} \sum_{i=1}^n\theta(y_i) \theta(y_i)^T\right)^\frac{1}{2}, \lambda\right)$ towards the one of $C\left( \E\left[ \theta(Y) \theta(Y)^T \right]^\frac{1}{2}, \lambda\right)$ \eqref{eq:hauffman_matrix}.
This technicalities essentially boil down to the concentration of the sample covariance matrix to its exact counterpart which can be established with tools such as Theorem \ref{thm:matrix_bernstein}.

Even though the present result sheds new light on the concentration of the solutions of the empirical LASSO problem, it still need to be connected to the underlying problem of finding bounds on the distance between $\phi_0$ and $\hphi$.
Answering this type of question requires the introduction of hypotheses on the support of the solutions $\phi_0$ and to incorporate this support in the definition of the Hoffman constant.
Furthermore, the dependency of the Hoffman constants of $C(A,\lambda)$ with respect to $\lambda$ has to be properly understood in order to balance the bias and variance terms.
From the author's perspective, there is still a substantial amount of work to accurately address the question of sparse recovery using Corollary \ref{cor:lasso}.

\subsection{Wasserstein Barycenter} \label{sec:wasserstein_barycenter}

Studying barycenters in Hadamard spaces was quite straightforward due to their non-positive curvature that implied Assumptions \ref{ass:existstence_minimizer}, \ref{ass:local_holder_error} (with $\JJ_0 = + \infty$), \ref{ass:lojasewicz_modulus} and \ref{ass:holder_l} (see Section \ref{sec:barycenter_hadamard}). 
The situation is more delicate in spaces of non-negative curvature, and it represents a limitation of the proposed analysis that cannot be applied verbatim. Deeper investigations are thus required to obtain concentration bounds in this case.
A perfect example of such spaces, which has drawn attention in the last decade, is the Wasserstein space. A brief introduction is presented here, but complete treatments can be found in good textbooks \citep[such as][]{villani2003topics,ambrosio2008gradient,ambrosio2013user}.

\subsubsection{The Wasserstein Space}
Given $(H,h)$ a Polish space, we let $\Pc_2(H)$ be the set of all Borel probability measures $\phi$ on $H$ such that $\E_{X \sim \phi} [h(X,y)^2] < +\infty$ for all $y \in H$. Define $\Pi_{\phi,\psi}$, the set of couplings between two measures $\phi$ and $\psi$ in $\Pc_2(H)$, as the set of probability measures $\pi$ on $H \times H$ having $\phi$ and $\psi$ as marginals. The Wasserstein distance between $\phi$ and $\psi$ is defined as 
\begin{equation}\label{eq:wasser_dist}
  W_2^2(\phi,\psi) = \inf_{\pi \in \Pi_{\phi,\psi}} \E_{(X,Y) \sim \pi} [h(X,Y)^2].
\end{equation}

This section focuses on the case where $H$ is a Hilbert space endowed with the distance $h$ associated to the inner product, that is the spaces $(\Msp,\vartheta) = (\Xsp,\rho) = (\Pc_2(H),W_2)$. The Wasserstein space $\Msp$ is known to be geodesic and with non-negative curvature \cite[see][Section 7.3]{ambrosio2008gradient}. Let $\mu$ be a probability measure on $\Pc_2(\Msp)$, its barycenters are defined as the Fréchet means
\begin{equation} \label{eq:wasserstein_barycenter}
  \phi_* \in \argmin_{\phi \in \Pc_2(\Msp)} \JJ(\phi) =  \int W_2^2(\phi,\psi) d\mu(\psi),
\end{equation}
that is setting $l(\phi,\psi) = W_2^2(\phi,\psi)$ in the formalism of \eqref{eq:opt_J}. The existence of the barycenter and its empirical counterpart is guaranteed in most reasonable settings \citep[see][]{agueh2011barycenters,le2017existence}, for example when $(H,h)$ is a separable locally compact geodesic space.

\subsubsection{The Difficulties of Non-Negative Curvature}

As observed by \citet[Theorem 7.3.2]{ambrosio2008gradient}, the barycenter function \eqref{eq:wasserstein_barycenter} and its empirical counterpart are not strongly convex. This is due to the positive curvature of $(\Pc_2(H),W_2)$, and actually $\JJ$ and $\hJJ$ exhibit some concavity property, in the sense of \eqref{eq:concavity_PC}. Therefore, the functions $\JJ$ and $\hJJ$ cannot be strongly convex and  the global error bound of Assumption \ref{ass:local_holder_error} with $\JJ = +\infty$ cannot be obtained via these means. 

The lack of strong convexity of the barycenter functions does not imply that a global error bound of Assumption \ref{ass:local_holder_error} with $\JJ = +\infty$ cannot hold as it represents a weaker notion of strong convexity (see Section \ref{sec:application_eigenvector} for an example of non-convex function with a global error bound). The situation becomes more complex when observing that such a bound cannot hold, for all measures $\mu \in \Pc_2(\Msp)$, since this would imply that $\Msp$ has non-positive curvature \cite[Theorem 4.9]{sturm2003probability}.
Furthermore, such a bound cannot hold uniformly at the vicinity of the barycenters, since the curvature of the balls of $\Msp$ of any radius can be unbounded \cite[Section 2.5]{stromme2020wasserstein}.

Similarly, Assumption \ref{ass:holder_l} cannot hold for all $x,y,\phi,\psi \in \Msp$ since this would imply that $\Msp$ has a non-positive curvature (see Section \ref{sec:barycenter_hadamard}). It would however hold for any $\phi,\psi$ on any bounded subset of $\Msp$.

\subsubsection{When Convergence can be Proved}

After having drawn a catastrophic picture of the situation, proving that the empirical barycenters converge to the exact ones with parametric rate is surprisingly still possible. 

Additional work is however required to characterize measures $\mu \in \Pc_2(\Msp)$ for which convergence can be proved. The following derivation are from \citet{ahidar2020convergence,le2022fast} from which we state the results relevant for the purposes of this section while referring to them for a complete treatment. 

The first ingredient is the fact that the tangent space $\TT_{\phi_*}\Msp$ of $\Msp$ at the barycenter $\phi_*$ can be identified to a Hilbert space \citep[see][Theorem 7]{le2022fast}. The distance induced by the metric will be denoted $\| \cdot \|_{\phi_*}$.
Second they show (in their Corollary 16) that when the measure $\mu \in \Pc_2(\Msp)$ has one barycenter $\phi_*$ such that every $\phi \in \supp(\mu)$ is the pushforward of $\phi_*$ by a the gradient of an $\alpha$-strongly convex and $\beta$-smooth function $\zeta_{\phi_* \to \phi}$ (the Kantorovich potential), that is $\phi = \nabla \zeta_{\phi_* \to \phi} \sharp \phi_*$, the function $\JJ$ satisfies the global error bound of Assumption \ref{ass:local_holder_error} with $\beta = 2$, $\tau = \frac{1}{\beta-\alpha - 1}$ and $\JJ_0 = +\infty$.
Importantly, under this assumption $\tau \vartheta(\hphi,\Sc)^2 \leq \| \frac{1}{n} \log_{\phi_*} X_i \|_{\phi_*}^2$. In other words, the problem of finding a barycenter of $\mu$ can be lifted to finding a barycenter of $\log_{\phi_*} \sharp \mu$ on the tangent space $\TT_{\phi_*} \Msp$, which has a Hilbert structure, from which parametric rates from Section \ref{sec:barycenter_hadamard} can be used.

\subsection{Entropic-Wassertein Barycenters}

Establishing convergence guarantees on the empirical Wasserstein barycenters towards their exact counterpart appeared to be a delicate task (see Section \ref{sec:wasserstein_barycenter}). 
This obstacle was due to the non-negative curvature of $(\Pc_2(\R^d), W_2)$ that lead to an Assumption \ref{ass:local_holder_error} difficult to check for general measures.

To circumvent these difficulties, it was proposed by \citet{bigot2019penalization,kroshnin2017fr} to seek a regularized barycenter by adding an entropic regularization to the Wasserstein variance function.
\citet{bigot2019penalization} proved the existence and uniqueness of the regularized barycenters as well as convergence of their empirical counterpart.
Recently, \cite{carlier2021entropic} derived regularity properties of the entropic-barycenters and established central limit theorems.

Let $\Omega$ be a bounded subset of $\R^d$, of positive Lebesgue measure and endowed with the Euclidean distance. Define $\Msp = \Pc_2^{ac}(\Omega)$ the subset of measures of $\Pc_2(\Omega)$ that are absolute continuous with respect to the Lebesgue measure.
The space $\Msp$ will be endowed with the total variation distance, which coincides up to a factor $\frac{1}{2}$ with the $\ell^1$ norm on $\Pc_2^{ac}(\Omega)$, that is $\vartheta = \frac{1}{2} \| \cdot - \cdot\|_1$. This choice might be surprising at first but will be substantiated later in this section.
Furthermore, let $(\Xsp,\rho) = (\Pc_2(\Omega),W_2)$.
The entropic-barycenter of a probability measure $\mu \in \Pc_2(\Msp)$ is defined, as proposed by \citet{bigot2019penalization}, as the minimizer of 
\begin{equation*}
  \JJ(\phi) = \frac{1}{2} \int_{\Pc_2(\Omega)} W_2^2(\phi,\psi) d\mu(\psi) + \lambda \RR(\phi),
\end{equation*}
where $\lambda > 0 $ is the regularization parameter and $\RR$ is the negative entropy defined for every $\phi \in \Pc_2(\Omega)$ by
\begin{equation*}
  \RR(\phi) = \left\{
  \begin{aligned}
    &\int_\Omega f \log f dx & \quad \text{if } \phi = f dx, \\
    &+ \infty & \quad \text{otherwise}.
  \end{aligned}
  \right.
\end{equation*}
This choice of regularization enforces the Wasserstein barycenter to be absolutely continuous with respect to the Lebesgue measure. The measures $\phi_*$ and $\widehat{\phi}$, as the minimizer of $\JJ$ and $\hJJ$ respectively, therefore admit a density. 
We will slightly abuse notation and use the same one for the measures and their densities.
Any probability measure $\mu$ on $\Msp$ satisfying $\int_{\Pc_2(\Omega)} \E_{X \sim \psi}[X^2] d\mu(\psi) < +\infty$, admits a unique entropic-barycenter \citep[see][Proposition 2.1]{carlier2021entropic}.

While the Wasserstein distance squared is not geodesically convex (see Section \ref{sec:wasserstein_barycenter}), it is convex with respect to the linear structure in $\Pc_2(\Omega)$, that is considering paths like $\phi_t = t \phi_1 + (1-t) \phi_0$ for $\phi_0, \phi_1 \in \Pc_2(\Omega)$ and $t \in [0,1]$, \citep[see Proposition 7.17][]{santambrogio2015optimal}. This leads to convex functions $\ff$ and $\hff$.
Furthermore, the negative entropy is strongly convex, for the linear structure, with respect to the total variation distance on $\Pc_2^{ac}(\Omega)$. These two properties imply the local error bounds for $\JJ$ and $\hJJ$ so that Assumptions \ref{ass:local_holder_error} and \ref{ass:lojasewicz_modulus} hold.

Checking Assumption \ref{ass:holder_l} is more delicate since one has to bound the left hand side of \eqref{eq:holder_l} with $l = \frac{1}{2}W_2^2$ by a quantity involving the total variation which endows $\mathcal{P_2}^{ac}(\Omega)$ with a different topology than with $W_2$.

\begin{lemma} \label{lem:assumption_regbar}
  Assuming that $\Omega$ is bounded and is of positive Lebesgue measure, this example satisfies Assumptions \labelcref{ass:existstence_minimizer,ass:local_holder_error,ass:convergence_bassin,ass:lojasewicz_modulus,ass:holder_l} with
  \begin{center}
  \begin{tabular}{c c c c c c c c c}
  \toprule
  $\JJ_0$ & $\beta$ & $\tau$ & $\alpha$ & $a$ & $\|a\|_{\psi_1}$ & $\eta$ & $\kappa$ & $\iota$ \\ \midrule
  $+\infty$ & 2 & 2 & $1$ & $4\diam(\Omega)^2$ & $4\diam(\Omega)^2$ & $0$ & $0$ & $0$ \\
  \bottomrule
\end{tabular}
\end{center}
\end{lemma}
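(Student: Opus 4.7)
The plan is to verify the five assumptions by exploiting two structural facts: (i) in the linear structure on $\Pc_2^{ac}(\Omega)$, the data-fidelity $\ff(\phi) = \tfrac{1}{2}\int W_2^2(\phi,\psi)\,d\mu(\psi)$ and its empirical counterpart $\hff$ are convex (Santambrogio, Prop.\ 7.17), while the negative entropy $\RR$ is strongly convex with respect to $\|\cdot\|_1$ thanks to Pinsker's inequality; and (ii) for bounded $\Omega$, the Kantorovich potentials associated with the quadratic cost are uniformly bounded by a constant depending only on $\diam(\Omega)$, which yields a global quadruple inequality for $W_2^2$ against the total variation distance.

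For Assumption \ref{ass:existstence_minimizer}, since $\Omega$ is bounded every $\psi \in \Pc_2(\Omega)$ has finite second moment, so the existence and uniqueness of both the entropic barycenter $\phi_*$ and its empirical counterpart $\hphi$ follow from Proposition 2.1 of \cite{carlier2021entropic}. For Assumptions \ref{ass:local_holder_error} and \ref{ass:lojasewicz_modulus}, Pinsker's inequality implies that $\RR$ is $1$-strongly convex with respect to $\|\cdot\|_1$, so $\lambda\RR$ is $\lambda$-strongly convex. Combined with the convexity of $\ff$ (resp.\ $\hff$) in the linear structure, this gives the global quadratic growth $\JJ(\phi) - \JJ_* \geq \tau\|\phi-\phi_*\|_1^2$ (and the same bound for $\hJJ$) on the whole space. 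Thus Assumption \ref{ass:local_holder_error} holds with $\JJ_0 = +\infty$ and $\beta = 2$, and since the lower bound on the \L{}ojasiewicz constant of $\hJJ$ is deterministic and identical to that of $\JJ$, Assumption \ref{ass:lojasewicz_modulus} holds with $\kappa \equiv 0$. Assumption \ref{ass:convergence_bassin} is then trivial with $\eta \equiv 0$, because $\JJ_0 = +\infty$.

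The substantive step is Assumption \ref{ass:holder_l}: I need to establish the quadruple inequality
\[
\tfrac{1}{2}\bigl(W_2^2(\phi,x) - W_2^2(\psi,x) - W_2^2(\phi,y) + W_2^2(\psi,y)\bigr) \leq a(x,y)\,\|\phi-\psi\|_1
\]
with $a$ a deterministic constant of order $\diam(\Omega)^2$ and $\Upsilon = \Msp$ (so $\iota \equiv 0$). The plan is to invoke Kantorovich duality: letting $f_{\phi,x}$ denote an optimal Kantorovich potential for the pair $(\phi,x)$ with quadratic cost, duality yields $W_2^2(\phi,x) - W_2^2(\psi,x) \leq \int f_{\phi,x}\,d(\phi-\psi)$, and the symmetric manipulation for $y$ gives
\[
W_2^2(\phi,x) - W_2^2(\psi,x) - W_2^2(\phi,y) + W_2^2(\psi,y) \leq \int (f_{\phi,x} - f_{\psi,y})\,d(\phi-\psi).
\]
The bound $\bigl|\int h\,d(\phi-\psi)\bigr| \leq \|h\|_\infty \|\phi-\psi\|_1$ then reduces the problem to a uniform $L^\infty$ control on the potentials. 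Because $\Omega$ is bounded, the $c$-concave potentials can be normalized to take values in $[0,\diam(\Omega)^2]$, which furnishes the required constant and completes the verification of Assumption \ref{ass:holder_l}.

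The main obstacle is the potential-normalization step in the quadruple inequality: one must ensure that the simultaneous choice of potentials $f_{\phi,x}$ and $f_{\psi,y}$ — which are defined only up to additive constants — can be made so that the $L^\infty$ difference is bounded by a quantity of order $\diam(\Omega)^2$ \emph{uniformly} in the four arguments. Once this is done, all other assumptions reduce to direct applications of convex analysis in the linear structure, and the claimed table entries follow by tracking the constants (the Pinsker constant for $\tau$ and the potential bound for $a$).
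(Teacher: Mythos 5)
Your proposal follows the same skeleton as the paper's proof: existence and uniqueness from Proposition 2.1 of \cite{carlier2021entropic}, convexity of $W_2^2(\cdot,\psi)$ for the linear structure plus strong convexity of the entropy via Pinsker to get Assumptions \ref{ass:local_holder_error} and \ref{ass:lojasewicz_modulus} deterministically (hence $\eta=\kappa=0$, and Assumption \ref{ass:convergence_bassin} is vacuous since $\JJ_0=+\infty$), and Kantorovich duality to reduce the quadruple inequality to a bound on $\int (f_{\phi,x}-f_{\psi,y})\,d(\phi-\psi)$. The only place you diverge is the final estimate: the paper uses the fact that the potentials are $2\diam(\Omega)$-Lipschitz, bounds the integral by $4\diam(\Omega)\,W_1(\phi,\psi)$ via Kantorovich--Rubinstein, and then applies the transport inequality $W_1(\phi,\psi)\le \diam(\Omega)\|\phi-\psi\|_1$ to get $a=4\diam(\Omega)^2$; you instead propose a sup-norm versus total-variation bound on suitably normalized potentials. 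Your route is also viable, and the ``main obstacle'' you flag is in fact not one: since $\phi-\psi$ has zero total mass, additive constants in the potentials integrate to zero, so you may bound $\int f_{\phi,x}\,d(\phi-\psi)$ and $\int f_{\psi,y}\,d(\phi-\psi)$ separately, each by $\tfrac{1}{2}\,\mathrm{osc}(f)\,\|\phi-\psi\|_1$, and the oscillation of a $c$-concave potential for the bounded cost $c(u,v)=\|u-v\|^2$ on $\Omega$ is at most $\|c\|_\infty=\diam(\Omega)^2$; no simultaneous normalization of the two potentials is required. Two bookkeeping points remain if you want to reproduce the table exactly: Pinsker in the form $\|\phi-\psi\|_1\le\sqrt{\tfrac{1}{2}\mathrm{KL}(\phi|\psi)}$ gives the growth constant $\tau=2$ (your ``$1$-strong convexity'' undercounts, and strictly the weight $\lambda$ should multiply it, a factor the paper itself drops), and you should track the explicit constant in $a$, which your argument delivers at the same order as the paper's $4\diam(\Omega)^2$.
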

\begin{proof}
See Appendix \ref{sec:proof_entropic_wass}.
\end{proof}

\begin{corollary}\label{cor:assumption_regbar}
Under the assumption of Lemma \ref{lem:assumption_regbar}, for any $\delta > 0$
\begin{equation*}
  \vartheta(\hphi, \phi_*) \leq 16\diam(\Omega)^2 \left(n^{-\frac{1}{2}} + 2\sqrt{\frac{1}{n} \log\left(\frac{1}{\delta}\right)}+\frac{1}{n}\log\left(\frac{1}{\delta}\right)\right),
\end{equation*}
with probability at least $1-\delta$ and 
\begin{equation*}
  \E\left[ \vartheta(\hphi, \phi_*)^2  \right] \leq 2^8 \diam^4(\Omega) n^{-1}.
\end{equation*}
\end{corollary}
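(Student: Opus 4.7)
The plan is to apply the main concentration theorem (Theorem \ref{thm:concentration}) and the conditional expectation bound (Lemma \ref{lem:bound_conditional_expectation}) with the explicit constants extracted in Lemma \ref{lem:assumption_regbar}. Since that lemma identifies $\beta = 2$ and $\alpha = 1$, I would invoke the specialized corollaries stated just after these general results, so that no additional probabilistic work is required.

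First, I would compute the single summary constant $L = \tau^{-1}\|a\|_{\psi_1} = (4\diam(\Omega)^2)/2 = 2\diam(\Omega)^2$, using the tabulated values $\tau = 2$ and $\|a\|_{\psi_1} = 4\diam(\Omega)^2$. The critical observation, which makes the proof collapse to a substitution, is that $\eta = \kappa = \iota = 0$ in Lemma \ref{lem:assumption_regbar}, hence $p_n = 0$, so every term involving $p_n$ in both main results vanishes trivially, and the condition $p_n \leq 3/4$ is satisfied.

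For the in-probability bound, the $\beta = 2, \alpha = 1$ corollary of Theorem \ref{thm:concentration} then reduces to
\begin{equation*}
\vartheta(\hSc;\Sc) \leq 8L\left(n^{-1/2} + 2\sqrt{n^{-1}\log(1/\delta)} + n^{-1}\log(1/\delta)\right)
\end{equation*}
with probability at least $1-\delta$. Substituting $8L = 16\diam(\Omega)^2$ produces the announced bound, after noting that the entropic-Wasserstein barycenter is unique (as recalled before Lemma \ref{lem:assumption_regbar}) so that $\Sc = \{\phi_*\}$ and $\vartheta(\hSc;\Sc) = \|\hphi - \phi_*\|_1$. For the expectation bound, uniqueness of $\phi_*$ gives $\diam(\Sc) = 0$, so the specialized unique-minimizer corollary of Lemma \ref{lem:bound_conditional_expectation} yields $\E[\vartheta(\hSc;\Sc)^2 \,|\, \bX \in \Ac] \leq 2^6 L^2 n^{-1}$; since $p_n = 0$, the conditional expectation coincides with the unconditional one, and substituting $L = 2\diam(\Omega)^2$ gives $2^6 \cdot 4 \diam(\Omega)^4 n^{-1} = 2^8 \diam(\Omega)^4 n^{-1}$, as claimed.

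There is essentially no obstacle at this stage; the corollary is a mechanical substitution once Lemma \ref{lem:assumption_regbar} is in hand. The genuine difficulty lies upstream, in establishing that lemma --- in particular checking the H\"older-type Assumption \ref{ass:holder_l} for $l = \tfrac{1}{2}W_2^2$ while measuring distances on $\Msp$ in total variation. This requires a quadruple-type inequality controlling $W_2^2(\phi,x) - W_2^2(\psi,x) - W_2^2(\phi,y) + W_2^2(\psi,y)$ by a multiple of $\diam(\Omega)^2 \|\phi - \psi\|_1$, exploiting the boundedness of $\Omega$ to convert a $W_2$-based quantity into a total-variation-based one, but this has already been done in the statement of Lemma \ref{lem:assumption_regbar}.
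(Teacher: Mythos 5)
Your proposal is correct and follows exactly the route the paper intends: Corollary \ref{cor:assumption_regbar} is a mechanical instantiation of the $\beta=2$, $\alpha=1$ corollaries of Theorem \ref{thm:concentration} and Lemma \ref{lem:bound_conditional_expectation} with the constants of Lemma \ref{lem:assumption_regbar}, where $p_n=0$ kills all the $p_n$-terms, uniqueness of the entropic barycenter gives $\Sc=\{\phi_*\}$, and $L=2\diam(\Omega)^2$ yields $8L=16\diam(\Omega)^2$ and $2^6L^2=2^8\diam(\Omega)^4$. No gaps.
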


As a comparison, \citet{bigot2019penalization} obtained a bound of kind $\E[\textrm{KL}(\hphi|\phi_*)^2] \leq c n^{-1}$, where $\textrm{KL}$ refers to the Kullback-Leibler divergence, for $d=1$ and some constant $c > 0$. The general case $d \geq 2$ requires additional smoothness assumption.
Expressing this bound in terms of total variation via the Pinsker's inequality would give $\E[\| \hphi - \phi_* \|_1^4] \leq c n^{-1}$.
The bound obtained here in Corollary \ref{cor:assumption_regbar} appears to be faster and does not require additional smoothness assumptions to deal with $d \geq 2$.

\section{Discussion}

The concentration of the empirical minimizers of the risk \eqref{eq:opt_hatJ} to the ones of \eqref{eq:opt_J} is a fundamental question in statistical learning.
Instead of the usual guarantees on the estimation error, concentration inequalities on the distance between the sets of minimizers are investigated.

In particular, this work identifies a set of assumptions, that is rather weak, and should fit a wide variety of estimations problems, with spaces that are unbounded and infinite dimensional and cost functions that are unbounded and non-convex. 
Therefore, these assumptions allow to describe a mechanism that seems to govern the concentration of the empirical minimizers in many estimation problems.
More precisely, there exists a regime $\bX \in \Ac$, happening with high-probability, where the empirical minimizers are stable. This stability was leveraged in Theorem \ref{thm:concentration} to obtain parametric concentration rates. The result was obtained using a new McDiarmid-type inequality for functions with sub-exponential differences on subsets of large measure.
The assumptions were verified on a selection of estimation problems showing their relevance and showcasing the optimality of the bounds of the Theorem \ref{thm:concentration}.

Even tough this work establishes the regime of stability of the empirical minimizers and the machinery to obtain parametric concentration rates from it, the set of assumptions has to be checked for each problem, which is quite unsatisfactory in the quest of user-friendly theorems.
While some clues were provided for the general treatments of Assumptions \ref{ass:convergence_bassin} and \ref{ass:holder_l}, the most interesting and probably the most open ones seems to be the concentration of the \L{}ojasiewicz constant in Assumption \ref{ass:lojasewicz_modulus}.
Additionally to the technical questions raised in the core of the paper, Theorem \ref{thm:concentration} leaves aside the case $\beta = \alpha$ which might be of great interest for applications where the cost function $l$ has, for example, a Lipschitz continuous gradient.

\acks{
The author would like to thank the anonymous reviewers and the editor, Joseph Salmon, for their time and effort in handling the paper. 
Their careful review and constructive comments have greatly improved the quality of this manuscript. 
}

\appendix

\section{Additional Proofs} \label{appendix:add_proof}

\subsection{General Concentration Results}

\begin{proof}[{Proof of Theorem \ref{thm:bernstein_mcdiarmid}}]
  The Theorem 4 of \citet{maurer2021concentration} is not exactly stated in the form of \eqref{eq:sub_gamma_macdiarmid}. More precisely, it states that
  \begin{equation*}
     \P\left( f(\bX) - \E f \geq t \right) \leq \exp\left( - \frac{t^2}{4 e^2 \sigma^2 + 2 e S t }\right),
  \end{equation*}
  for all $t \geq 0$, which is slightly weaker than \eqref{eq:sub_gamma_macdiarmid}.
  However, a closer inspection of the proofs shows that 
  \begin{equation*}
    \log \E\left[ e^{t(f - \E f)}\right] \leq \frac{t^2 e^2 \sigma^2}{1 - t e S},
  \end{equation*}
  for $t \leq (eS^{-1})$. This shows that $f(\bX) \in \mathrm{sub}\Gamma_+(2e^2\sigma^2,eS)$ which implies the tail \eqref{eq:sub_gamma_macdiarmid}, see Section \ref{sec:preliminaries}.
 \end{proof}

\subsection{Supporting Results for the Proof of Lemma \ref{lem:bound_conditional_expectation}}

\begin{lemma} \label{lem:expectation_empirical_risk_equal}
In the setting of the proof of Lemma \ref{lem:bound_conditional_expectation},
  \begin{equation*}
    \E_{\bX} \left[ \E_Y \left[\left(\tJJ_* - \hJJ_*\right) \indic_{Y \in I(\bX)} \right] \indic_{\bX \in \Ac}\right] = 0.
  \end{equation*}
\end{lemma}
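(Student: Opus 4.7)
The plan is to exploit the i.i.d.\ symmetry between $X_1$ and $Y$. First, I would collapse the nested expectation into a single joint expectation over $(\bX, Y)$ and rewrite the event using the identity from \eqref{eq:prop_Ac_I}: since $\{\bX \in \Ac\} \cap \{Y \in I(\bX)\} = \{\bX \in \Ac\} \cap \{\bX' \in \Ac\}$, the quantity to be bounded equals
\begin{equation*}
\E_{\bX, Y}\!\left[(\tJJ_* - \hJJ_*)\, \indic_{\bX \in \Ac}\, \indic_{\bX' \in \Ac}\right].
\end{equation*}

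Next I would apply the swap $\sigma : (X_1, Y, X_2, \ldots, X_n) \mapsto (Y, X_1, X_2, \ldots, X_n)$, which only exchanges the first sample of $\bX$ with the perturbing variable. Under $\sigma$ the vector $\bX$ becomes $\bX'$ and the vector $\bX'$ becomes $\bX$, so the empirical minimum values swap: $\hJJ_* \leftrightarrow \tJJ_*$. The indicator $\indic_{\bX \in \Ac}\indic_{\bX' \in \Ac}$ is symmetric in the pair $(\bX,\bX')$ and is therefore preserved by $\sigma$; note that this uses the fact, already stated in the paper, that $\Ac$ is invariant under permutations of the coordinates (so the section structure behaves well). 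Because $X_1$ and $Y$ are i.i.d.\ (and independent of $X_2, \ldots, X_n$), the joint law of $(X_1, Y, X_2, \ldots, X_n)$ is invariant under $\sigma$, hence the integral equals
\begin{equation*}
\E_{\bX, Y}\!\left[(\hJJ_* - \tJJ_*)\, \indic_{\bX \in \Ac}\, \indic_{\bX' \in \Ac}\right],
\end{equation*}
which is the negative of the original expression. Therefore the quantity is equal to its own negative, and so must vanish.

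There is essentially no obstacle here: the argument is a one-line symmetrization. The only point requiring care is to verify that the event $\{\bX \in \Ac\} \cap \{\bX' \in \Ac\}$ is genuinely invariant under the swap of $X_1$ and $Y$, which follows from the permutation invariance of $\hJJ$ (and thus of $\Ac$) already noted in Section~\ref{sec:assumptions}, together with the rewriting provided by \eqref{eq:prop_Ac_I}.
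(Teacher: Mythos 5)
Your symmetrization step is exactly the mechanism the paper uses in its last line (the joint law of $(X_1,Y,X_2,\ldots,X_n)$ is invariant under swapping $X_1$ and $Y$, which exchanges $\hJJ_*$ and $\tJJ_*$ while preserving $\indic_{\bX\in\Ac}\indic_{\bX'\in\Ac}$), and that part of your argument is fine. The gap is in the step you dismiss as having ``essentially no obstacle'': collapsing the nested expectation $\E_{\bX}\bigl[\E_Y[\,\cdot\,\indic_{Y\in I(\bX)}]\indic_{\bX\in\Ac}\bigr]$ into the joint expectation $\E_{\bX,Y}[\,\cdot\,\indic_{\bX\in\Ac,\bX'\in\Ac}]$ is an application of Fubini's theorem, and since the integrand $\tJJ_*-\hJJ_*$ is signed, Fubini requires checking joint absolute integrability, i.e.\ $\E_{\bX,Y}\bigl[\,|\tJJ_*-\hJJ_*|\,\indic_{\bX\in\Ac,\bX'\in\Ac}\bigr]<+\infty$. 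In the setting of this paper neither $l$ nor the spaces $\Msp$, $\Xsp$ are assumed bounded, so this is not automatic; without it your swap argument could be manipulating an expression of the form $+\infty-\infty$, and ``equal to its own negative, hence zero'' would be vacuous.

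That integrability check is in fact the entire content of the paper's proof: it writes $|\tJJ_*-\hJJ_*|$ via the intermediate quantities $\inf_{\phi_*\in\Sc}\tJJ(\phi_*)$ and $\inf_{\phi_*\in\Sc}\hJJ(\phi_*)$, bounds the difference of these two infima using the standing hypothesis $\E[|l(\phi_*,X)|]<+\infty$, and bounds each excess $\inf_{\phi_*\in\Sc}\hJJ(\phi_*)-\hJJ_*$ (and its analogue for $\tJJ$) by combining the optimality of $\phi_*$ with Assumption \ref{ass:holder_l}, the Cauchy--Schwartz inequality, and the moment bounds on $\vartheta(\hSc;\Sc)$ from Lemma \ref{lem:subexp_bound_distance} (which themselves rely on $\|a\|_{\psi_1}<+\infty$ and the conditioning on $\Ac$). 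To complete your proof you would need to supply this (or an equivalent) integrability argument before invoking the exchange of the order of integration; once that is in place, your one-line symmetry conclusion matches the paper's.
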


\begin{proof}
  This result will be clear, as long as the order of integration can be exchanged, because $\indic_{\bX \in \Ac}$ and $\indic_{\bX' \in \Ac}$ have the same distribution.
  This will be achieved checking that
  \begin{equation*}
    \E_{\bX,Y} \left[ \left|\tJJ_* - \hJJ_*\right| \indic_{\bX \in \Ac, \bX' \in \Ac}\right] < +\infty.
  \end{equation*}
The quantity is indeed bounded by
\begin{equation*}
  \begin{aligned}
    & \E_{\bX,Y} \left[ \left|\tJJ_* - \inf_{\phi_* \in \Sc} \tJJ(\phi_*) + \inf_{\phi_* \in \Sc} \tJJ(\phi_*) - \inf_{\phi_* \in \Sc} \hJJ(\phi_*) + \inf_{\phi_* \in \Sc} \hJJ(\phi_*) - \hJJ_*\right| \indic_{\bX,\bX' \in \Ac}\right] \\
    & \leq \E_{\bX,Y} \left[ \left(\inf_{\phi_* \in \Sc} \tJJ(\phi_*) - \tJJ_* \right) \indic_{\bX,\bX' \in \Ac}\right] + \E_{\bX,Y} \left[ \left(\inf_{\phi_* \in \Sc} \hJJ(\phi_*) - \hJJ_* \right) \indic_{\bX,\bX' \in \Ac}\right] \\
    & \quad + \E_{\bX,Y} \left[ \left|\inf_{\phi_* \in \Sc} \tJJ(\phi_*) - \inf_{\phi_* \in \Sc} \hJJ(\phi_*) \right| \indic_{\bX,\bX' \in \Ac}\right].
  \end{aligned}
  \end{equation*}
  The third term is finite since, by picking any $\phi_* \in \Sc$,
  \begin{equation*}
    \begin{aligned}
      \E_{\bX,Y} \left[ \left|\inf_{\phi_* \in \Sc} \tJJ(\phi_*) - \inf_{\phi_* \in \Sc} \hJJ(\phi_*) \right| \indic_{\bX,\bX' \in \Ac}\right] & \leq 2 \E_{\bX,Y} \left[ \left|\hJJ(\phi_*) \right| \indic_{\bX,\bX' \in \Ac}\right] \\
      & \leq \frac{2}{n} \sum_{i=1}^n \E_{\bX,Y}  \left[ \left|l(\phi_*,X_i) \right| \indic_{\bX,\bX' \in \Ac}\right] + 2 \RR(\phi_*),
    \end{aligned}
  \end{equation*}
  together with the assumption that $\E_{X} \left[ \left| l(\phi_*, X) \right|\right] < + \infty$ and the fact that $\RR(\phi_*) < + \infty$ since $\phi_*$ is a minimizer of $\JJ$.
  The first two terms can further be bounded in the same fashion so that only one is detailed. Due to the optimality of $\phi_*$ and leveraging Assumption \ref{ass:holder_l}
  \begin{equation*}
    \begin{aligned}
      \E_{\bX,Y} \left[ \left(\inf_{\phi_* \in \Sc} \hJJ(\phi_*) - \hJJ_* \right) \indic_{\bX,\bX' \in \Ac}\right] & \leq \E_{\bX,Y} \left[ \frac{1}{n} \sum_{i=1}^n \E_Z[a(Z,X_i)] \vartheta(\hSc ; \Sc)^\alpha  \indic_{\bX,\bX' \in \Ac}\right] \\
      & = \E_{\bX,Y} \left[ \E_Z[a(Z,X_i)] \vartheta(\hSc ; \Sc)^\alpha  \indic_{\bX,\bX' \in \Ac}\right] \\
      & \leq \|a\|_{L^2} \|\vartheta(\hSc ; \Sc)^\alpha \indic_{\bX \in \Ac} \|_{L^2},
    \end{aligned}
  \end{equation*}
  by an application of Cauchy-Schwartz inequality. 
  The latter quantity is finite since $\|a\|_{L^2} \leq 2 \|a\|_{\psi_1}$ and using the bounds on the moments of $\vartheta(\hSc ; \Sc)$ of Lemma \ref{lem:subexp_bound_distance}. 
  
  Therefore, in virtue of Fubini's theorem,
  \begin{equation*}
      \E_{\bX} \left[ \E_Y \left[\left(\tJJ_* - \hJJ_*\right) \indic_{Y \in I(\bX)} \right] \indic_{\bX \in \Ac}\right] = \E_{\bX,Y} \left[ \left(\tJJ_* - \hJJ_*\right) \indic_{\bX,\bX' \in \Ac}\right] = 0 \\
  \end{equation*}
  since $\bX$ and $\bX'$ share the same distribution.
\end{proof}

\begin{lemma} \label{lem:bound_E2}
  The term $E_2$ from the proof of Lemma \ref{lem:bound_conditional_expectation} can be bounded by
   \begin{equation*}
      E_2 \leq 2^{\max(0,\alpha-1)+2} \left( \|a\|_{\psi_1} \diam(\Sc)^\alpha + 2 \|a\|_{\psi_1}^{\frac{\beta}{\beta-\alpha}} \left(\tau^{-1} \max\left( \frac{4\alpha}{\beta - \alpha},1\right) \right)^\frac{\alpha}{\beta-\alpha} \right) \sqrt{p_n}.
    \end{equation*}
\end{lemma}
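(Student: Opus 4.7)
The plan is to upper bound $E_2 = \E[\sup_{\hphi \in \hSc} e_2 \mid \bX \in \Ac]$ by decomposing $e_2$ via Assumption \ref{ass:holder_l}, extracting a factor $\sqrt{p_n}$ through Cauchy--Schwarz on the rare event $\{\bX \in \Ac,\ \bX' \notin \Ac\}$ together with its negative correlation structure, and controlling the residual $L_2$ moments with the sub-exponential norm of $a$ and the stability estimate of Lemma \ref{lem:hJJ_error_bound}.

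To begin, I fix $\phi_* \in \Sc \subseteq \Upsilon$. Since $\hSc \subseteq \Upsilon$ on $\{\bX \in \Ac\}$, Assumption \ref{ass:holder_l} applied to $(\hphi,\phi_*)$ and $(Y,X_i)$ and then averaged over $i$ gives
\[
l(\hphi,Y) - \hff(\hphi) \leq l(\phi_*,Y) - \hff(\phi_*) + \vartheta(\hphi,\phi_*)^\alpha \cdot \tfrac{1}{n}\sum_{i=1}^n a(Y,X_i),
\]
while the optimality of $\hphi$ yields $\hJJ_* - \JJ_* \leq \hff(\phi_*) - \ff(\phi_*)$. Substituting both into $e_2$, then applying the triangle inequality $\vartheta(\hphi,\phi_*) \leq \vartheta(\hphi,\Sc) + \diam(\Sc)$ together with $(u+v)^\alpha \leq 2^{\max(0,\alpha-1)}(u^\alpha+v^\alpha)$, and finally taking the supremum over $\hphi \in \hSc$, produces two residuals: one carrying the $\diam(\Sc)^\alpha$ factor and one carrying $\vartheta(\hSc,\Sc)^\alpha$, each paired with $\E_Y[\tfrac{1}{n}\sum_i a(Y,X_i)\indic_{Y \notin I(\bX)}]$.

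Taking conditional expectation on $\bX \in \Ac$, interchanging $\E$ and $\E_Y$ by Tonelli, and using $\indic_{Y \notin I(\bX)}\indic_{\bX \in \Ac} = \indic_{\bX \in \Ac,\ \bX' \notin \Ac}$, Cauchy--Schwarz on each residual combined with the negative correlation of $\{\bX \in \Ac\}$ and $\{\bX' \notin \Ac\}$ (Lemma \ref{lem:negative_correlation}) produces $\P(\bX \in \Ac,\ \bX' \notin \Ac) \leq (1-p_n)p_n$, and dividing by $\P(\bX \in \Ac) = 1-p_n \geq 1/4$ yields a factor at most $2\sqrt{p_n}$. The $L_2$ norm entering the $\diam(\Sc)^\alpha$ residual reduces to $\|a\|_2 \leq 2\|a\|_{\psi_1}$, giving the first claimed term. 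For the $\vartheta(\hSc,\Sc)^\alpha$ residual, combining Assumption \ref{ass:local_holder_error} with a symmetric application of Assumption \ref{ass:holder_l} (exploiting the optimality of $\hphi$) yields the deterministic bound $\tau \vartheta(\hphi,\Sc)^{\beta-\alpha} \leq \tfrac{1}{n}\sum_i \E_Y[a(Y,X_i)]$ on $\{\bX \in \Ac\}$; raising to the power $\alpha/(\beta-\alpha)$ and using the moment growth $\|a\|_p \leq p\|a\|_{\psi_1}$ with $p = 2\beta/(\beta-\alpha)$, combined with H\"older, delivers the combinatorial factor $\max(4\alpha/(\beta-\alpha),1)^{\alpha/(\beta-\alpha)}$ and the second claimed term.

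The principal obstacle is to avoid uncontrolled $L_2$ moments of the reference losses $l(\phi_*,Y)$ appearing in $\E_Y[(l(\phi_*,Y)-\hff(\phi_*))\indic_{Y \notin I(\bX)}] + (\hff(\phi_*)-\ff(\phi_*))$, since Assumption \ref{ass:holder_l} only controls quadruple differences, not $l(\phi_*,\cdot)$ itself. The key resolution is the identity $\hff(\phi_*)-\ff(\phi_*) = -\E_Y[l(\phi_*,Y)-\hff(\phi_*)]$, which telescopes these pieces into $-\E_Y[(l(\phi_*,Y)-\hff(\phi_*))\indic_{Y \in I(\bX)}]$, now supported on the \emph{symmetric} event $\{\bX,\bX' \in \Ac\}$; exploiting the $X_1 \leftrightarrow Y$ swap symmetry of that event, the residual rewrites as a weighted difference $\E[l(\phi_*,X_1)-l(\phi_*,X_i)]$ $(i \geq 2)$ against the symmetric indicator, which Assumption \ref{ass:holder_l} (with $\hphi$ serving as an intermediate reference) converts into an $a$-based expression, and a second Cauchy--Schwarz pass recovers the remaining $\sqrt{p_n}$ factor.
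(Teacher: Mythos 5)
Your first two paragraphs track the paper's own treatment of the dominant part of $E_2$ (the term $e_{2,1}$ in the paper's proof) essentially step for step: the quadruple inequality with $\phi_*\in\Sc$ as reference, the optimality bound $\hJJ_*-\JJ_*\le\hff(\phi_*)-\ff(\phi_*)$, the split $\vartheta(\hphi,\phi_*)^\alpha\le 2^{\max(0,\alpha-1)}(\vartheta(\hphi,\Sc)^\alpha+\diam(\Sc)^\alpha)$, Tonelli, Cauchy--Schwarz on $\{\bX\in\Ac,\bX'\notin\Ac\}$ combined with Lemma \ref{lem:negative_correlation} and $p_n\le 3/4$, $\|a\|_2\le 2\|a\|_{\psi_1}$ for the $\diam(\Sc)^\alpha$ piece, and the stability bound $\tau\vartheta(\hSc;\Sc)^{\beta-\alpha}\le\frac1n\sum_i\E_Y[a(Y,X_i)]$ of Lemma \ref{lem:subexp_bound_distance} for the other piece. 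One minor discrepancy: the paper's factor $\max\bigl(\frac{4\alpha}{\beta-\alpha},1\bigr)^{\frac{\alpha}{\beta-\alpha}}$ comes from a $(4,4,2)$ H\"older split, i.e.\ $\|a\|_4$ together with $\|a\|_{\max(4\alpha/(\beta-\alpha),1)}$; your choice $p=\frac{2\beta}{\beta-\alpha}$ would instead produce $\bigl(\frac{2\beta}{\beta-\alpha}\bigr)^{\frac{\beta}{\beta-\alpha}}$, so the constant stated in the lemma does not come out of your computation as written.

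The genuine gap is in your last paragraph, i.e.\ the residual the paper calls $e_{2,2}$. The telescoping identity landing on $-\E_Y[(l(\phi_*,Y)-\hff(\phi_*))\indic_{Y\in I(\bX)}]$ is exactly the paper's move, but your handling of the surviving $i\ge2$ contributions does not work as described. First, the quadruple inequality with $\hphi$ as pivot converts $l(\phi_*,X_1)-l(\phi_*,X_i)$ only into $a(X_1,X_i)\,\vartheta(\phi_*,\hphi)^\alpha$ \emph{plus} the residue $l(\hphi,X_1)-l(\hphi,X_i)$, a single-parameter loss difference across two data points that no assumption controls --- precisely the obstacle you set out to avoid, now with $\hphi$ in place of $\phi_*$. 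Second, a Cauchy--Schwarz pass against the indicator of the symmetric event $\{\bX\in\Ac,\bX'\in\Ac\}$ cannot yield $\sqrt{p_n}$: that event has probability at least $1-2p_n$, hence the factor is of order one; to extract $\sqrt{p_n}$ you would have to center by the unconditional expectation and pay second moments of $l(\phi_*,\cdot)$ (or of the data-dependent weight, whose antisymmetry under the $X_1\leftrightarrow Y$ swap is destroyed because $\hphi$ depends on $\bX$), and the assumptions only give $\E|l(\phi_*,X)|<+\infty$. Third, even granting such a bound, the resulting extra term would have to be shown to fit inside the specific constants of the lemma. The paper's proof avoids all of this: it reduces the average over $i$ to the $i=1$ term by invoking the permutation invariance of $\Ac$, and then $\E_{\bX,Y}[(l(\phi_*,X_1)-l(\phi_*,Y))\indic_{\bX\in\Ac,\bX'\in\Ac}]=0$ by the $X_1\leftrightarrow Y$ swap, so $E_{2,2}\le 0$ and this residual contributes nothing to the bound.
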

\begin{proof}
The term $e_2$ can further be bounded, for any $\phi_* \in \Sc$, by 
\begin{equation*}
  \begin{split}
  e_2 & = \E_Y\left[\left( l(\hphi,Y) - l(\hphi,X_1) \right) \indic_{Y \notin I(\bX)} \right] + \hJJ_* - \JJ_* \\
    & = e_{2,1} + e_{2,2},
  \end{split}
\end{equation*}
with 
\begin{equation*}
  \begin{aligned}
  e_{2,1} & = \E_Y\left[\left( l(\hphi,Y) - l(\hphi,X_1) - l(\phi_*,Y) + l(\phi_*,X_1) \right) \indic_{Y \notin I(\bX)} \right], \\
  e_{2,2} &= \E_Y\left[\left( l(\phi_*,Y) - l(\phi_*,X_1) \right) \indic_{Y \notin I(\bX)} \right] + \hJJ_* - \JJ_*.
    \end{aligned}
\end{equation*}
Therefore 
\begin{equation*}
 E_2 \leq E_{2,1} + E_{2,2} \quad \text{with } E_{2,i} = \E\left[\sup_{\hphi \in \hSc} e_{2,i} \, | \, \bX \in \Ac\right].
\end{equation*}
The remaining of the proof consists in bounding each term.

\begin{description}
  \item[{Bounding $E_{2,1}$}]
  The term $e_{2,1}$ can further be bounded by
 \begin{equation*}
      \begin{aligned}
        e_{2,1} &\leq \E_Y\left[ (l(\hphi,Y) - l(\phi_*,Y) + l(\phi_*,X_i) - l(\hphi,X_i)) \indic_{Y \notin I(\bX)} \right] & \\
        & \leq \E_Y\left[  a(Y,X_1) \indic_{Y \notin I(\bX)} \right] \vartheta(\hphi, \phi_*)^\alpha &  (\text{Assumption \ref{ass:holder_l}})  \\
        & \leq \E_Y\left[  a(Y,X_1) \indic_{Y \notin I(\bX)} \right] 2^{\max(0,\alpha-1)} \left( \vartheta(\hphi,  \Sc)^\alpha + \diam(\Sc)^\alpha\right). &
      \end{aligned}
  \end{equation*}
  This bound allows to upper bound $E_{2,1}$ by
    \begin{equation*}
      \begin{aligned}
        E_{2,1} &\leq 2^{\max(0,\alpha-1)} \E_{\bX} \left[  \E_Y\left[  a(Y,X_1) \indic_{Y \notin I(\bX)} \right]  \left( \vartheta(\hSc ;  \Sc)^\alpha + \diam(\Sc)^\alpha\right)  \,|\,  \bX \in \Ac \right]   \\
        & = 2^{\max(0,\alpha-1)} \frac{1}{1-p_n} \E_{\bX,Y}  \left[ \left(\vartheta(\hSc  ;\Sc)^\alpha + \diam(\Sc)^\alpha \right) a(Y,X_1) \indic_{\bX \in \Ac, \bX' \notin \Ac} \right],
      \end{aligned}
    \end{equation*}
  using Tonelli's theorem to swap the order of integration and \eqref{eq:prop_Ac_I}.
  The term 
  \begin{equation*}
        \begin{aligned}
          \E_{\bX,Y}  \left[  a(Y,X_1) \indic_{\bX \in \Ac, \bX' \notin \Ac} \right] & \leq \|a\|_{L^2} \P(\bX \in \Ac, \bX' \notin \Ac)^{1/2} \\
          & \leq 2 \|a\|_{\psi_1} \P(\bX \in \Ac, \bX' \notin \Ac)^{1/2},
        \end{aligned}
  \end{equation*}
  using Cauchy-Schwartz inequality.
  Now, two successive applications of the Cauchy-Schwartz inequality give
  \begin{equation*}
        \begin{aligned}
          \E_{\bX,Y} & \left[ \vartheta(\hSc  ;\Sc)^\alpha a(Y,X_1) \indic_{\bX \in \Ac, \bX' \notin \Ac} \right] & \\
          & \leq \| a \|_{L^4} \| \vartheta(\hSc  ;\Sc)^\alpha \indic_{\bX \in \Ac, \bX' \notin \Ac}\|_{L^4} \P(\bX \in \Ac, \bX' \notin \Ac)^{1/2} & \\
          & \leq\|a\|_{L^4} \| \vartheta(\hSc  ;\Sc)^{\beta-\alpha} \indic_{\bX \in \Ac, \bX' \notin \Ac} \|_{L^{\max\left(\frac{4\alpha}{\beta - \alpha},1\right)}}^{\frac{\alpha}{\beta-\alpha}} \P(\bX \in \Ac, \bX' \notin \Ac)^{1/2} & \\
          & \leq \|a\|_{L^4} \left(\tau^{-1}\| a \|_{L^{\max\left(\frac{4\alpha}{\beta - \alpha},1\right)}} \right)^{\frac{\alpha}{\beta-\alpha}} \P(\bX \in \Ac, \bX' \notin \Ac)^{1/2} &(\text{Lemma \ref{lem:subexp_bound_distance}})\\
          & \leq 4 \|a\|_{\psi_1} \left(\tau^{-1} \max\left( \frac{4\alpha}{\beta - \alpha},1\right) \| a \|_{\psi_1} \right)^{\frac{\alpha}{\beta-\alpha}} \P(\bX \in \Ac, \bX' \notin \Ac)^{1/2}. &
        \end{aligned}
    \end{equation*}
    On the other hand, the Lemma \ref{lem:negative_correlation} allows to show that
    \begin{equation*}
        \frac{\sqrt{\P(\bX \in \Ac, \bX' \notin \Ac)}}{1-p_n} \leq \frac{\sqrt{p_n (1-p_n)}}{1-p_n} \leq 2 \sqrt{p_n},
    \end{equation*}
since  $\sqrt{p_n/(1-p_n)} \leq 2\sqrt{p_n}$ for $p_n \leq 3/4$.
    Combining all these bounds leads to
    \begin{equation*}
      \begin{aligned}
      E_{2,1} &\leq 2^{\max(0,\alpha-1)+1} \sqrt{p_n} \left( \|a\|_{\psi_1} \diam(\Sc)^\alpha + 2 \|a\|_{\psi_1}^{\frac{\beta}{\beta-\alpha}} \left(\frac{1}{\tau} \max\left( \frac{4\alpha}{\beta - \alpha},1\right) \right)^\frac{\alpha}{\beta-\alpha} \right).
      \end{aligned}
    \end{equation*}

  \item[{Bounding $E_{2,2}$}]
  Since $\hJJ_* \leq \hJJ(\phi_*)$ for all $\phi_* \in \Sc$ the term $e_{2,2}$ can further be bounded by
\begin{equation*}
  \begin{aligned}
  e_{2,2} &\leq \E_Y\left[\left( l(\phi_*,Y) - l(\phi_*,X_1) \right) \indic_{Y \notin I(\bX)} \right] + \hJJ(\phi_*) - \JJ_*.
  \end{aligned}
\end{equation*}
Since $\Ac$ is invariant under permutations of the components of $\bX$, 
\begin{equation*}
\E\left[\hJJ(\phi_*) \, | \, \bX \in \Ac \right] = \frac{1}{n} \sum_{i=1}^n \E\left[\hJJ(\phi_*) \, | \, \bX \in \Ac \right] = \E\left[l(\phi_*,X_1) \, | \, \bX \in \Ac \right].
\end{equation*}
Therefore,
\begin{equation*}
  \begin{aligned}
    E_{2,2} & \leq \frac{1}{1-p_n} \E_{\bX} \left[  \E_Y\left[\left(l(\phi_*,X_1) - l(\phi_*,Y) \right) \indic_{Y \in I(\bX)} \right] \indic_{\bX \in \Ac} \right].
  \end{aligned}
\end{equation*}

Moreover, since $\E_Y\left[|l(\phi_*,Y)|\right] < +\infty$, the order of integration can be switched in virtue of Fubini's theorem, leading to
\begin{equation*}
    E_{2,2} \leq \frac{1}{1-p_n} \E_{\bX,Y} \left[ \left( l(\phi_*,X_1) - l(\phi_*,Y) \right) \indic_{\bX \in \Ac,\bX' \in \Ac} \right] = 0,
\end{equation*}
since $\bX$ and $\bX'$ have the same distribution.
\end{description}
Combining the bounds on $E_{2,1}$ and $E_{2,2}$ gives the desired result.
\end{proof}

\begin{lemma} \label{lem:subexp_bound_distance}
    Let $\Ac $ be the subset defined by \eqref{eq:def_Ac} and assume Assumptions \labelcref{ass:existstence_minimizer,ass:local_holder_error,ass:convergence_bassin,ass:lojasewicz_modulus,ass:holder_l}  hold, then conditionally on $\Ac$
    \begin{equation*}
      \vartheta(\hSc;\Sc)^{\beta - \alpha} \leq \frac{1}{\tau n} \sum_{i=1}^n \E_Y[a(Y,X_i)].
    \end{equation*}
    Furthermore, for all $p \geq 1$
    \begin{equation*}
      \left\|\vartheta(\hSc;\Sc)^{\beta - \alpha} \indic_{\bX \in \Ac} \right\|_{L^p} \leq \frac{\| a\|_{L^p}}{\tau}.
    \end{equation*}
\end{lemma}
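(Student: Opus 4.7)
The plan is to combine the local Łojasiewicz bound of Assumption \ref{ass:local_holder_error} with a Bousquet--Elisseeff-style ``swap'' argument exploiting the optimality of $\hphi$, and then transfer the resulting pointwise estimate to an $L^p$ estimate via Minkowski plus Jensen.

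First I would fix an arbitrary $\hphi \in \hSc$ and work on the event $\bX \in \Ac$. The definition \eqref{eq:def_Ac} of $\Ac$ guarantees that $\JJ(\hphi) \leq \JJ_0$, so Assumption \ref{ass:local_holder_error} gives $\tau\,\vartheta(\hphi,\Sc)^\beta \leq \JJ(\hphi) - \JJ_*$. For any $\phi_* \in \Sc$, optimality of $\hphi$ for $\hJJ$ means $\hJJ(\phi_*) - \hJJ(\hphi) \geq 0$; adding this non-negative quantity and noting that the regularization $\RR$ cancels, I am left with the estimate
\[
\JJ(\hphi) - \JJ_* \leq \frac{1}{n}\sum_{i=1}^n \E_Y\!\bigl[l(\hphi,Y) - l(\phi_*,Y) - l(\hphi,X_i) + l(\phi_*,X_i)\bigr].
\]
On $\Ac$ both $\hphi \in \Upsilon$ (since $\hSc \subseteq \Upsilon$) and $\phi_* \in \Upsilon$ (since $\Sc \subseteq \Upsilon$ by Assumption \ref{ass:holder_l}), so the quadruple inequality \eqref{eq:holder_l} bounds the integrand pointwise by $a(Y,X_i)\,\vartheta(\hphi,\phi_*)^\alpha$. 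Taking the infimum over $\phi_* \in \Sc$ and then dividing by $\vartheta(\hphi,\Sc)^\alpha$ (the case $\vartheta(\hphi,\Sc) = 0$ being trivial) yields
\[
\tau\,\vartheta(\hphi,\Sc)^{\beta-\alpha} \leq \frac{1}{n}\sum_{i=1}^n \E_Y[a(Y,X_i)].
\]
Since the right-hand side is independent of $\hphi$, taking the supremum over $\hphi \in \hSc$ gives the first claim.

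For the second part I would multiply through by $\indic_{\bX \in \Ac}$ and apply Minkowski's inequality to the finite sum, bounding $\|\vartheta(\hSc;\Sc)^{\beta-\alpha}\indic_{\bX \in \Ac}\|_p$ by $\frac{1}{\tau n}\sum_{i=1}^n \|\E_Y[a(Y,X_i)]\|_p$. A direct application of Jensen's inequality to the convex map $t \mapsto t^p$ (equivalently, the fact that conditional expectation is an $L^p$ contraction) gives $\|\E_Y[a(Y,X_i)]\|_p \leq \|a(Y,X_i)\|_p = \|a\|_p$, so that summing over $i$ produces the factor $n$ which cancels, yielding the stated bound $\|a\|_p/\tau$.

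There is no serious obstacle here; the only subtle point is ensuring that the quadruple inequality \eqref{eq:holder_l} really applies at the random point $\hphi$ (not just at the fixed $\phi_*$), which is exactly why the event $\{\hSc \subseteq \Upsilon\}$ has been built into the definition of $\Ac$.
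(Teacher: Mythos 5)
Your proposal is correct and follows essentially the same route as the paper: the Łojasiewicz bound on the event $\JJ(\hphi)\leq\JJ_0$, the addition of the non-negative optimality gap $\hJJ(\phi_*)-\hJJ(\hphi)$ with cancellation of $\RR$, the quadruple inequality \eqref{eq:holder_l} (valid because $\hSc\subseteq\Upsilon$ on $\Ac$ and $\Sc\subseteq\Upsilon$), division by $\vartheta(\hphi,\Sc)^\alpha$, and then Minkowski plus the $L^p$-contraction of the conditional expectation together with the i.i.d.\ samples for the moment bound. No gaps.
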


\begin{proof}
The conditioning on $\bX \in \Ac$ implies that all $\hphi \in \hSc$ belong to the level-set $[\JJ \leq \JJ_0]$.
Let $\hphi \in \hSc$, the Assumption \ref{ass:local_holder_error} gives 
    \begin{equation*}
      \begin{aligned}
      \tau \vartheta(\widehat{\phi},\Sc)^\beta & \leq \JJ(\hphi) - \JJ_* \\
       & \leq \inf_{\phi_* \in \Sc} \JJ(\hphi) - \JJ_* + \hJJ(\phi_*) - \hJJ(\hphi) & (\text{since } \hphi \in \hSc) \\
       & = \inf_{\phi_* \in \Sc} \frac{1}{n} \sum_{i=1}^n \E_Y\left[ l(\hphi,Y) - l(\phi_*,Y)+ l(\phi_*,X_i) - l(\hphi,X_i)\right] & (\text{by Assumption \ref{ass:holder_l}}) \\
       & \leq \frac{1}{n} \sum_{i=1}^n \E_Y\left[ a(Y,X_i) \right] \inf_{\phi_* \in \Sc} \vartheta(\hphi,\phi_*)^\alpha.
      \end{aligned}
    \end{equation*}
    Therefore,
    \begin{equation*}
      \tau \vartheta(\widehat{\phi},\Sc)^{\beta-\alpha} \leq \frac{1}{n} \sum_{i=1}^n \E_Y\left[ a(Y,X_i) \right],
    \end{equation*}
    holds for any $\hphi \in \hSc$ leading to the expected result.
    The bound on the moments is obtained remarking that
    \begin{equation*}
      \begin{aligned}
        \left\| \frac{1}{\tau n} \sum_{i=1}^n \E_Y[a(Y,X_i)]  \indic_{\bX \in \Ac} \right\|_{L^p} & \leq \left\| \frac{1}{\tau n} \sum_{i=1}^n \E_Y[a(Y,X_i)] \right\|_{L^p} \\
        & \leq \frac{1}{\tau n}  \sum_{i=1}^n \left\| \E_Y[a(Y,X_i)]\right\|_{L^p} \\
        & \leq \frac{1}{\tau n}  \sum_{i=1}^n \left( \E_{\bX,Y}[a(Y,X_i)^p]\right)^{1/p},
       \end{aligned}
    \end{equation*}
    using Jensen's inequality. The final result is obtained remarking that the samples $(X_i)_{i=1}^n$ are independent copies.
\end{proof}

\subsection{Negative Correlation of \texorpdfstring{$\bX \in \Ac$ and $\bX \in \Ac'$}{events}}

\begin{lemma} \label{lem:negative_correlation}
Let $\bX = (X_1, \ldots, X_n)$ and $\bX' = (Y,X_2, \ldots, X_n)$ where $(X_i)_{i=1}^n$ and $Y$ are independent samples of $\mu$. Let $\Ac $ be the subset of $\Xsp^n$ defined in \eqref{eq:def_Ac} then
\begin{equation*}
  \P( \bX \in \Ac, \bX' \notin \Ac) \leq \P(\bX \in  \Ac) \P(\bX \notin  \Ac).
\end{equation*}
\end{lemma}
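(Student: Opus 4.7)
The plan is to reduce the statement to an application of Jensen's inequality by conditioning on the ``common coordinates'' $(X_2, \ldots, X_n)$. The crucial structural observation is that the section $I(\bx) = \{x \in \Xsp \, | \, (x, x_2, \ldots, x_n) \in \Ac\}$ depends only on $(x_2, \ldots, x_n)$ and not on $x_1$, and that the events $\{\bX \in \Ac\}$ and $\{\bX' \in \Ac\}$ can be rewritten respectively as $\{X_1 \in I(\bX)\}$ and $\{Y \in I(\bX)\}$. Since $X_1$ and $Y$ are i.i.d.\ copies of $\mu$ that are both independent of $(X_2, \ldots, X_n)$, this decouples the two conditions cleanly.

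Concretely, I would introduce the random variable $Q = \mu(I(\bX))$, which is a measurable function of $(X_2, \ldots, X_n)$ alone. By the tower property together with the conditional independence of $X_1$ and $Y$ given $(X_2, \ldots, X_n)$, I would compute
\begin{equation*}
\P(\bX \in \Ac) = \E[Q], \qquad \P(\bX \notin \Ac) = \E[1-Q],
\end{equation*}
and
\begin{equation*}
\P(\bX \in \Ac, \bX' \notin \Ac) = \E\left[\,\P(X_1 \in I(\bX) \mid X_2,\ldots,X_n)\cdot \P(Y \notin I(\bX) \mid X_2,\ldots,X_n)\right] = \E[Q(1-Q)].
\end{equation*}

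The conclusion then follows immediately from $\E[Q^2] \geq \E[Q]^2$ (Jensen's inequality, or equivalently $\Var(Q) \geq 0$):
\begin{equation*}
\E[Q(1-Q)] = \E[Q] - \E[Q^2] \leq \E[Q] - \E[Q]^2 = \E[Q]\,\E[1-Q].
\end{equation*}

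There is no real obstacle here; the only thing to be careful about is to confirm that $I(\bX)$ is indeed measurable in $(X_2, \ldots, X_n)$ (which is guaranteed because $\Ac$ is defined in \eqref{eq:def_Ac} via measurable operations on the empirical risk and its minimizers, and the minimizers are measurable by Assumption \ref{ass:existstence_minimizer}), so that Fubini/the tower property apply without any integrability issue since all random variables involved are bounded indicators.
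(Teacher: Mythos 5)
Your proof is correct and follows essentially the same route as the paper: both condition on the shared coordinates $(X_2,\ldots,X_n)$, use the conditional independence of $X_1$ and $Y$ given these coordinates to factor the joint probability, and conclude from the nonnegativity of the variance of the conditional probability $Q = \P(\bX \in \Ac \mid X_2,\ldots,X_n)$. The only cosmetic difference is that the paper packages this as a law-of-total-covariance decomposition and cites the FKG inequality to get $\Cov\left(g(\bZ),1-g(\bZ)\right) \leq 0$, whereas you obtain the same inequality directly via Jensen applied to $Q$.
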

\begin{proof}
  The probability of the intersection can be decomposed as follows
  \begin{equation*}
    \begin{aligned}
      \P( \bX \in \Ac, \bX' \notin \Ac) = \E\left[ \indic_{\bX \in \Ac} \indic_{\bX' \notin \Ac}  \right] &= \E\left[ \indic_{\bX \in \Ac}\right] \E\left[ \indic_{\bX' \notin \Ac}  \right] + \Cov\left(\indic_{\bX \in \Ac},\indic_{\bX' \notin \Ac}\right).
    \end{aligned}
  \end{equation*}
  Let $\bZ = (X_2,\ldots,X_n)$. In virtue of the law of total covariance 
  \begin{equation*} 
  \begin{aligned}
    \Cov\left(\indic_{\bX \in \Ac},\indic_{\bX' \notin \Ac} \right) &= \E\left[ \Cov\left( \indic_{\bX \in \Ac},\indic_{\bX' \notin \Ac} | \bZ \right) \right] + \Cov\left( \E\left[\indic_{\bX \in \Ac} | \bZ \right] , \E\left[\indic_{\bX' \notin \Ac} | \bZ \right] \right).
  \end{aligned}
  \end{equation*}
  Since conditionally on $\bZ$, the random variables $X_1$ and $Y$ are independent, the covariance $\Cov\left( \indic_{\bX \in \Ac},\indic_{\bX' \notin \Ac} | \bZ \right) = 0$. 
  Let $g : \Xsp^{n-1} \to \R$ be the function defined by $g(\bZ) = \E\left[\indic_{\bX' \notin \Ac} | \bZ \right]$. Then
  \begin{equation*}
    \Cov\left( \E\left[\indic_{\bX \in \Ac} | \bZ\right] , \E\left[\indic_{\bX' \notin \Ac} | \bZ \right] \right) = \Cov\left( g(\bZ), 1-g(\bZ) \right) \leq 0,
  \end{equation*}
  by the FKG inequality \citep{fortuin1971correlation}. Therefore $\Cov\left(\indic_{\bX \in \Ac},\indic_{\bX' \notin \Ac} \right) \leq 0$, which gives the desired result.
\end{proof}

\section{Additional Proofs for Applications}

\subsection{Concentration of Sample Covariance Matrices} 

\begin{lemma} \label{lem:sample_covariance_subexp}
  Let $Z \in \R^d$ be a sub-Gaussian random vector and let $(Z_i)_{i=1}^n$ be $n$ independent copies of $Z$.
  Let $A = \E[ZZ^T]$ and $\widehat{A} = \frac{1}{n} \sum_{i=1}^n Z_i Z_i^T$ then
  \begin{equation*}
    \P\left( \|A - \widehat{A}\| > t \right) \leq 2 d \exp\left( -\frac{nt^2}{2
    \left(e^2 (d+1)^{2} \|Z\|_{\psi_2}^4 + e (d+1) \|Z\|_{\psi_2}^2 t\right) }\right).
  \end{equation*}
\end{lemma}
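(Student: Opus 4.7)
The plan is to apply a matrix Bernstein inequality to the decomposition
\begin{equation*}
\widehat{A} - A = \frac{1}{n} \sum_{i=1}^n W_i, \qquad W_i = Z_i Z_i^T - A,
\end{equation*}
where the $W_i$ are independent, centered, self-adjoint $d\times d$ random matrices. Since $Z_i$ is sub-Gaussian with parameter $\|Z_i\|_{\psi_2}$ rather than bounded, the standard bounded matrix Bernstein inequality does not apply directly: instead I would use the sub-exponential version (e.g.\ the extension of the matrix Bernstein inequality to matrices with sub-exponential operator-norm tail, as in Tropp's "User-friendly tail bounds"). This requires two ingredients: (i) a bound on the Orlicz norm $\|\,\|W_i\|\,\|_{\psi_1}$ (the scale parameter $M$) and (ii) a bound on the matrix variance $\|\sum_i \E[W_i^2]\|$ (the variance proxy $\sigma^2$).

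For the scale, I would use the preliminaries: since $\|Z_i\|^2 = \mathrm{tr}(Z_i Z_i^T)$ and $\|Z_i Z_i^T\|_{\mathrm{op}} = \|Z_i\|^2$, the identities $\|X^2\|_{\psi_1}=\|X\|_{\psi_2}^2$ together with $\|\,\|Z_i\|\,\|_{\psi_2} \leq \sqrt{d}\, \|Z_i\|_{\psi_2}$ give $\|\,\|Z_i Z_i^T\|\,\|_{\psi_1} \leq d\|Z_i\|_{\psi_2}^2$, and then by the triangle inequality $\|\,\|W_i\|\,\|_{\psi_1} \lesssim d\|Z_i\|_{\psi_2}^2$. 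The constant $e$ in the bound comes from the normalizing factor appearing when one converts the $\psi_1$ Orlicz norm into a moment generating function bound (this is the same $e$ that appears in Theorem \ref{thm:bernstein_mcdiarmid}); tracking this factor will produce the $M = ed\|Z_i\|_{\psi_2}^2$ scale parameter. For the variance, a direct computation gives $\E[W_i^2] = \E[\|Z_i\|^2 Z_i Z_i^T] - A^2 \preceq \E[\|Z_i\|^2 Z_i Z_i^T]$, whose operator norm is bounded by $\E[\|Z_i\|^4]$. Using again $\|\,\|Z_i\|^2\,\|_{\psi_1} \leq d\|Z_i\|_{\psi_2}^2$, the moment growth of sub-exponential random variables gives $\E[\|Z_i\|^4] \lesssim (ed\|Z_i\|_{\psi_2}^2)^2$, yielding $\sigma^2 \leq n\,e^2 d^2\|Z_i\|_{\psi_2}^4$ after summing over $i$.

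Plugging these into the matrix Bernstein inequality for sub-exponential matrices produces a tail of the form
\begin{equation*}
\P\!\left(\|\widehat{A}-A\| > t\right) \leq d\,\exp\!\left(-\frac{n^2 t^2}{2(\sigma^2 + Mnt)}\right) = d\,\exp\!\left(-\frac{nt^2}{2(e^2 d^2\|Z_i\|_{\psi_2}^4 + ed\|Z_i\|_{\psi_2}^2 t)}\right),
\end{equation*}
which is exactly the claim. The leading factor $d$ is the dimensional prefactor intrinsic to matrix Bernstein (coming from the trace bound on the matrix MGF).

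The main obstacle is bookkeeping the constants $e$ and $e^2$ coming from the sub-exponential version: a cleaner way is to first show that each $W_i$ is sub-gamma (in the sense of the preliminaries) in the operator norm with variance factor $e^2 d^2\|Z_i\|_{\psi_2}^4$ and scale parameter $ed\|Z_i\|_{\psi_2}^2$, and then invoke the matrix Chernoff/Laplace transform argument to conclude. There is no conceptual difficulty beyond this, only careful tracking of constants.
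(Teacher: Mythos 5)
Your plan is correct and follows essentially the same route as the paper: the paper also applies Tropp's matrix Bernstein inequality (in its Bernstein-moment-condition form, Theorem 6.2 of \cite{tropp2012user}) to $S_i = \frac{1}{n}(Z_iZ_i^T - A)$, deriving the scale $R = e\frac{d}{n}\|Z\|_{\psi_2}^2$ and variance proxy $\sigma^2 = \frac{1}{n}e^2d^2\|Z\|_{\psi_2}^4$ from the bound $\|\,\|S_i\|\,\|_{\psi_1} \leq \frac{d}{n}\|Z\|_{\psi_2}^2$ via $\E[S_i^p] \preceq \E[\|S_i\|^p]\Id_d$, exactly the sub-exponential-to-moment conversion you describe. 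The only cosmetic difference is that the paper does not compute $\E[W_i^2]$ directly but takes $B_i$ proportional to the identity from the same $\psi_1$ bound, which yields the identical constants.
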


\begin{proof}
  This result is a consequence of matrix Bernstein inequalities \citep[Theorem 6.2]{tropp2012user} stated in the next theorem.
  \begin{theorem} \label{thm:matrix_bernstein}
    Let $(S_i)_{k=1}^n$ be $n$ independent, self-adjoint symmetric random matrices of dimension $d$. Assume that 
    $\E[S_i] = 0$ and that there exist $R$ and $n$ matrices $(B_i)_{i=1}^n$ such that $\E[S_i^p] \preceq \frac{p!}{2} R^{p-2} B_i^2$ for all $p \geq 2$. Then
    \begin{equation*}
      \P\left( \lambda_{\max} \left( \sum_{i=1}^n S_i\right) > t \right) \leq d \exp\left( -\frac{t^2}{2(\sigma^2 + Rt) }\right),
    \end{equation*}
    with $\sigma^2 = \| \sum_{i=1}^n B_i^2\|$ and where $\lambda_{\max}$ stands for the algebraically largest eigenvalue. 
  \end{theorem}
  This result will be used by setting $S_i = \frac{1}{n} \left( Z_i Z_i^T - \E[ZZ^T] \right)$. This choice implies that $\E[S_i] = 0$ and $\sum_{i=1}^n S_i = \widehat{A} - A$. Moreover, since $S_i$ are self-adjoint matrices, the spectral norm of the matrix $\widehat{A} - A$ is equal to its largest magnitude eigenvalue that is $\| \widehat{A} - A \| = \max(-\lambda_{\min}(\widehat{A} - A), \lambda_{\max}(\widehat{A} - A))$ (here $\lambda_{\min}$ stands for the algebraically smallest eigenvalue).
  Since each $S_i \preceq \|S_i\| \Id_d$, the Bernstein condition can be obtained by bounding the moments of $\|S_i\|$ because $\E[S_i^p] \leq \E\left[\|S_i\|^p\right] \Id_d$. This is achieved remarking that
  \begin{equation*}
    \begin{aligned}
    \|S_i\| &= \sup_{v \in \mathbb{S}^{d-1}} |\langle S_iv,v \rangle| = \frac{1}{n} \sup_{v \in \mathbb{S}^{d-1}} \left| \langle Z_i, v\rangle^2 - \E\left[\langle Z, v\rangle^2 \right]\right| \\
    & \leq \frac{1}{n} \max\left( \|Z_i\|_2^2, \sup_{v \in \mathbb{S}^{d-1}} \E[\langle Z, v\rangle^2] \right) \\
    & \leq \frac{1}{n} \max\left( \|Z_i\|_2^2, \| Z \|_{\psi_2}^2 \right),
    \end{aligned}
  \end{equation*}
since $\E[\langle Z, v\rangle^2] \leq \| \langle Z, v\rangle^2 \|_{\psi_1} =  \| \langle Z, v\rangle \|_{\psi_2}^2 $ and using the definition of the sub-Gaussian norm of vectors.
  This implies that 
  \begin{equation*}
    \begin{aligned}
    \| \|S_i \| \|_{\psi_1} &\leq \frac{1}{n} \left( \| \|Z_i\|_2^2 \|_{\psi_1} + \| Z \|_{\psi_2}^2 \right) \\
    & = \frac{1}{n} \left( \| \|Z_i\|_2 \|_{\psi_2}^2 + \| Z \|_{\psi_2}^2 \right) \\
    & \leq \frac{d+1}{n}  \| Z \|_{\psi_2}^2,  \\
    \end{aligned}
  \end{equation*}
leading to $\E[\| S_i\|^p] \leq \frac{p!}{2} \left(e \frac{d+1}{n} \|Z\|_{\psi_2}^2\right)^p$.
  The Theorem \ref{thm:matrix_bernstein} can now be applied to $\lambda_{\max}(\widehat{A} - A)$ and $-\lambda_{\min}(\widehat{A} - A) = \lambda_{\max}(A - \widehat{A})$ with $R = e \frac{d+1}{n} \|Z_i\|_{\psi_2}^2$ and $B_i = e \frac{d+1}{n} \|Z_i\|_{\psi_2}^2 \Id_d$ with the corresponding $\sigma^2 = \frac{1}{n} e^2 (d+1)^{2} \|Z_i\|_{\psi_2}^4$. Applying the union bound to these estimates completes the proof.
\end{proof}

\subsection{Largest Eigenvector of Covariance Matrices} \label{sec:proof_eigenvector}

\begin{proof}[Proof of Lemma \ref{lem:assumption_least_eigen}]
Prior to the proof, general facts on $\mathbb{S}^{d-1}$ as a Riemannian manifold are introduced.

The tangent space $\TT_\phi \mathbb{S}^{d-1} = \{ v \in \R^d \, | \, \langle \phi, v \rangle_{\R^d} = 0 \}$ and the metric is defined by $\g_\phi(u,v) = \langle u,v\rangle_{\R^d}$, the Euclidean scalar product, for all $u,v \in \TT_\phi \mathbb{S}^{d-1}$. The norm of vectors in $\TT_\phi \mathbb{S}^{d-1}$ will be denoted by $\|\cdot\|_\phi = \g_\phi(\cdot,\cdot)$. In particular, here $\|\cdot\|_\phi = \|\cdot\|_2$ for all $\phi \in \mathbb{S}^{d-1}$ so that the dependency on $\phi$ might be dropped.
With this metric, the logarithm map reads $\log_\phi(\psi) = \left(\psi - \langle \psi,\phi\rangle \phi \right) \frac{\theta}{\sin(\theta)}$ with $\theta = \arccos\langle \phi,\psi\rangle$. 
The exponential map is given by $\exp_\phi(\xi) = \cos(\|\xi\|_2) \phi + \sin(\|\xi\|) \frac{\xi}{\|\xi\|_2}$ for $\xi \in \TT_\phi \mathbb{S}^{d-1}$.
Furthermore, the geodesic from $\phi$ to $\psi$ reads $\gamma(t) = \cos(\|\log_\phi(\psi) t\|) \phi + \sin(\|\log_\phi(\psi)t\|)\frac{\log_\phi(\psi)}{\|\log_\phi(\psi)\|}$. The geodesic distance between $\phi$ and $\psi$ is therefore $\vartheta(\phi,\psi) = \arccos \langle \phi, \psi \rangle_{\R^d}$.
One can check that $\| \phi - \psi \|_2 \leq \vartheta (\phi,\psi) \leq \frac{\pi}{2} \| \phi - \psi \|_2$.
The proof consists in checking the assumptions one by one. To simplify notation the matrix $A$ will refer to $\Cov(\mu)$.

\begin{description}
  \item[Assumption \ref{ass:existstence_minimizer}] The functions $\JJ$ and $\hJJ$ are continuous and defined on a compact domain $\Msp = \mathbb{S}^{d-1}$. Therefore, each of them admits at least one minimizer. 
  \item[Assumption \ref{ass:local_holder_error}]
  Let $\phi \in \Sc = \{ \pm u_1 \}$ where $u_1$ refer to the eigenvector associated to the largest eigenvalue $\lambda_1$.
  Remark that any point on the sphere is at distance at most $\pi/2$ of one of the minimizers, so that \ $\vartheta(\psi,\Sc) \leq \pi/2$ for all $\psi \in \mathbb{S}^{d-1}$.

  For a given $\psi \in \mathbb{S}^{d-1}$ we fix $\phi$ as the closest element of $\Sc$ to $\psi$ (since $\Sc = \{ \pm u_1\})$.
  Let $\xi = \log_\phi(\psi)$ and one has that $\|\xi\|_2 = \| \xi \|_{\phi} = \vartheta(\psi,\phi) \leq \frac{\pi}{2}$.
  The bound is obtained using
  \begin{equation*}
    \begin{split}
      & \JJ(\exp_{\phi}(\xi)) - \JJ(\phi) \\
      & = -\cos^2(\|\xi\|) \langle A \phi, \phi \rangle - 2 \cos(\|\xi\|) \sin(\|\xi\|) \left \langle A \phi, \frac{\xi}{\|\xi\|} \right \rangle - \sin^2(\|\xi\|) \left \langle A \frac{\xi}{\|\xi\|},\frac{\xi}{\|\xi\|} \right\rangle.
    \end{split}
  \end{equation*}
  Observing that $\langle A \phi, \xi\rangle = \langle \Lambda U^T \phi, U^T \xi\rangle = \lambda_d \langle \pm u_1,\xi \rangle = 0$ since $\xi \in \TT_\phi \mathbb{S}^{d-1}$ we obtain
  \begin{equation*} 
    \begin{split}
      \JJ(\exp_{\phi}(\xi)) - \JJ(\phi) & = \frac{\sin^2(\|\xi\|_2)}{\|\xi\|_2^2} \left( \| \xi \|_2^2 \langle A \phi, \phi \rangle - \langle A \xi,\xi \rangle  \right) \\
      & \geq \frac{4}{\pi^2} (\lambda_{1} - \lambda_2) \|\xi\|_2^2.
    \end{split}
  \end{equation*}
  The last inequality is obtained using Lemma \ref{lem:lower_bound_eigenvalue} and from the fact that the function $t \mapsto \sin^2(t)/t^2$ is lower bounded by $\frac{4}{\pi^2}$ on $[0,\frac{\pi}{2}]$. Since the norm of the tangent vector $\|\xi\|_2$ is equal to the geodesic distance $\vartheta(\psi,\Sc)$, this lead to
  \begin{equation*}
    \frac{4}{\pi^2} (\lambda_{1} - \lambda_2) \vartheta(\psi,\Sc)^2 \leq \JJ(\psi) - \JJ_*,
  \end{equation*}
  for all $\psi \in \Msp$. So that Assumption \ref{ass:local_holder_error} holds for $\JJ_0 = +\infty$, $\beta = 2$ and $\tau = \frac{4}{\pi^2} (\lambda_{1} - \lambda_2)$.

  \item[Assumption \ref{ass:convergence_bassin}]
  Since Assumption \ref{ass:local_holder_error} holds for $\JJ_0 = + \infty$, Assumption \ref{ass:convergence_bassin} holds with $\eta = 0$ since $\P(\JJ(\hphi) > \JJ_0) = 0$.

  \item[Assumption \ref{ass:lojasewicz_modulus}] 
  Let $\widehat{\lambda}_1 \geq \ldots \geq \widehat{\lambda}_m$ be the eigenvalues of $\widehat{A} = \frac{1}{n} \sum_{i=1}^n X_i X_i^T$. 
  The derivation of Assumption \ref{ass:local_holder_error} can also be carried on $\hJJ$ so that 
  \begin{equation*}
    \frac{4}{\pi^2} (\widehat{\lambda}_{1} - \widehat{\lambda}_2) \vartheta(\psi,\hSc)^2 \leq \hJJ(\psi) - \hJJ_*,
  \end{equation*}
  holds for $\phi \in \Msp$. This bound can be used as long as $(\widehat{\lambda}_{1} - \widehat{\lambda}_2) > 0$.

  In virtue of the Weyl's inequality the distance between the eigenvalues can be bounded by $|\lambda_i - \widehat{\lambda}_i| \leq \| A - \widehat{A}\|$ for all $1 \leq i \leq d$ so that the eigengap $\widehat{\lambda}_{1} - \widehat{\lambda}_2$ can be lower bounded by
  \begin{equation*}
    \begin{aligned}
    \widehat{\lambda}_{1} - \widehat{\lambda}_2 & \geq \lambda_1 - \lambda_2 - |\widehat{\lambda}_{1} - \lambda_1| - |\widehat{\lambda}_{2} - \lambda_2| \\
    & \geq \lambda_1 - \lambda_2 - 2 \| A - \widehat{A}\|.
    \end{aligned}
  \end{equation*}
  When $2 \| A - \widehat{A}\| \leq \frac{1}{2} (\lambda_1 - \lambda_2)$ the eigengap $\widehat{\lambda}_{1} - \widehat{\lambda}_2 \geq \frac{1}{2} (\lambda_1 - \lambda_2)$.
  This event happens with high probability since by Lemma \ref{lem:sample_covariance_subexp}:
  \begin{equation*}
    \P\left( \| A - \widehat{A}\| > \frac{1}{4} (\lambda_1 - \lambda_2)  \right) \leq 2 d e^{-\frac{n(\lambda_1 - \lambda_2)^2}{32e^2 (d+1)^{2} \|X\|_{\psi_2}^4 + 8e(d+1)\|X\|_{\psi_2}^2(\lambda_1 - \lambda_2)}}.
  \end{equation*}

  \item[Assumption \ref{ass:holder_l}] 

  Let $\phi,\psi \in \Msp$ and $x,y \in \Xsp$ such that $x \neq y$, we have
  \begin{equation*}
    \begin{aligned}
      & l(\phi,x) - l(\psi,x) - l(\phi,y) + l(\psi,y) \\
      &= \langle x, \phi - \psi\rangle \langle x, \psi + \phi \rangle + \langle y, \psi-\phi\rangle \langle y, \psi + \phi\rangle  \\
      & \leq 2\left( \|x\|_2^2 + \|y\|_2^2 \right) \|\phi-\psi\|_2 \\
      & \leq 2\left( \|x\|_2^2 + \|y\|_2^2\right) \vartheta(\phi,\psi),
    \end{aligned}
  \end{equation*}
  using $(a^2-b^2) = (a-b)(a+b)$ for the first equality and where the first inequality is obtained with the Cauchy-Schwartz inequality together with the fact that $\psi$ and $\phi$ have unit norm.
  The function defined by
\begin{equation*}
    a(x,y) = 2\left( \|x\|_2^2 + \|y\|_2^2\right) \indic_{x \neq y},
\end{equation*}
for $x,y \in \Xsp$, is a pseudometric and therefore is a candidate for Assumption \ref{ass:holder_l} to hold.
Since $X$ and $Y$ share the same distribution, the sub-exponential norm of $a$ can be bounded by
  \begin{equation*}
    \begin{aligned}
      \| a \|_{\psi_1} &\leq 4 \|\|X\|^2\|_{\psi_1} = 4 \||X\|\|_{\psi_2}^2.
    \end{aligned}
  \end{equation*}
  Finally, using the fact that the sub-Gaussian norm $\| \| X \|\|_{\psi_2} \leq \sqrt{d} \|X\|_{\psi_2}$, the Assumption \ref{ass:holder_l} is therefore satisfied with $\Upsilon = \Msp$, $\alpha = 1$ and the function $a$ that is sub-exponential when $X$ is sub-Gaussian.
\end{description} 
  
\end{proof}

\begin{lemma} \label{lem:lower_bound_eigenvalue}
Let $A \in \R^{d \times d}$ be a real symmetric matrix. Let $U = (u_1,\ldots,u_d) \in \R^{d \times d}$ and $\lambda_1 \geq \ldots \geq \lambda_d $ such that $A = U \Lambda U^T$ with $\Lambda = \mathrm{diag} (\lambda_i)_{i=1}^d$.
Let $v \in \R^d$ orthogonal to $u_1$ then
\begin{equation*}
  \|v\|_2^2 \langle A u_1,u_1 \rangle - \langle A v, v\rangle  \geq (\lambda_{1} - \lambda_2) \|v\|_2^2.
\end{equation*}  
\end{lemma}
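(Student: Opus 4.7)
The plan is to diagonalize in the eigenbasis of $A$ and exploit the orthogonality assumption directly. Since $(u_1,\ldots,u_d)$ is an orthonormal basis of $\R^d$, I would first expand $v = \sum_{i=1}^d \alpha_i u_i$ with $\alpha_i = \langle v, u_i\rangle$. The hypothesis $v \perp u_1$ immediately gives $\alpha_1 = 0$, so the expansion reduces to $v = \sum_{i=2}^d \alpha_i u_i$ and $\|v\|_2^2 = \sum_{i=2}^d \alpha_i^2$.

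Next I would compute each inner product in the expression. On one hand $\langle A u_1, u_1\rangle = \lambda_1$ because $u_1$ is the eigenvector associated with $\lambda_1$. On the other hand, using the diagonalization $A = \sum_{i=1}^d \lambda_i u_i u_i^T$ together with the orthonormality of the $u_i$'s yields
\begin{equation*}
\langle A v, v\rangle = \sum_{i=2}^d \lambda_i \alpha_i^2.
\end{equation*}
Since $\lambda_i \leq \lambda_2$ for all $i \geq 2$, this upper bound gives $\langle A v, v\rangle \leq \lambda_2 \|v\|_2^2$. Subtracting yields
\begin{equation*}
\|v\|_2^2 \langle A u_1, u_1\rangle - \langle A v, v\rangle \geq \lambda_1 \|v\|_2^2 - \lambda_2 \|v\|_2^2 = (\lambda_1 - \lambda_2)\|v\|_2^2,
\end{equation*}
which is the desired inequality. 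There is no substantive obstacle here; the lemma is a direct instance of the min-max characterization of eigenvalues (Courant–Fischer) restricted to the orthogonal complement of the top eigenvector, and the proof is essentially a one-line computation once the eigenbasis expansion is written down.
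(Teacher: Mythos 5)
Your proof is correct and follows essentially the same route as the paper: expand $v$ in the orthonormal eigenbasis, use $v \perp u_1$ to drop the first coefficient, and bound $\langle A v, v\rangle \leq \lambda_2 \|v\|_2^2$ since $\lambda_i \leq \lambda_2$ for $i \geq 2$. No gaps.
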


\begin{proof}
Since $v$ is orthogonal to $u_1$,
\begin{equation*}
\begin{split}
\|v\|_2^2 \langle A u_1,u_1 \rangle - \langle A v, v\rangle  &= \|v\|_2^2 \langle \Lambda U^T u_1, U^Tu_1 \rangle - \langle \Lambda U^T v, U^T v\rangle\\
& = \|v\|_2^2 \lambda_1 - \sum_{i=2}^{d} \lambda_i \langle u_i, v\rangle^2.
\end{split}
\end{equation*}
The vector $v$ can be decomposed in the orthogonal basis as $v = \sum_{i=2}^{d} \langle u_i,v\rangle u_i$ and therefore has a norm $\|v\|_2^2 = \sum_{i=2}^{d} \langle u_i, v\rangle^2$. We therefore conclude by remarking that
\begin{equation*}
\begin{split}
\|v\|_2^2 \lambda_1 - \sum_{i=2}^{d} \lambda_i \langle u_i, v\rangle^2 \geq \lambda_{1} \|v\|_2^2 - \lambda_2 \|v\|_2^2.
\end{split}
\end{equation*}
\end{proof}

\subsection{LASSO} \label{sec:proof_lasso}

\begin{proof}[Proof of Lemma \ref{lem:assumption_lasso}]

The proof consists in checking the assumptions one by one.

\begin{description}
  \item[Assumption \ref{ass:existstence_minimizer}] Since the functions $\JJ$ and $\hJJ$ are convex and coercive, each of them admits at least one minimizer.
  \item[Assumption \ref{ass:local_holder_error}]

In order to apply Lemma \ref{lem:hoffman}, the function $\JJ$ has to be written in the form $\JJ(\phi) = \LL(\phi) + c$ where $\LL$ is of the form $\LL(\phi) = \frac{1}{2} \|A \phi - b\|_2^2 + \lambda \| \phi\|_1$ and $c$ is a constant term.
Reminding that  $A = \E\left[ \theta(Y) \theta(Y)^T \right]^{\frac{1}{2}}$, it can be shown that 
\begin{equation*}
  \begin{split}
  \ff(\phi) &= \frac{1}{2} \left \langle A \phi, A \phi \right\rangle - \langle \phi, \E_{\mu}[\theta(Y) V] \rangle +  \frac{1}{2} \E[V^2] \\ 
  & =\frac{1}{2} \left \langle A (\phi-\phi_0), A(\phi-\phi_0) \right\rangle + \frac{1}{2} \E[V^2] - \frac{1}{2}\| A \phi_0 \|_2^2,
  \end{split}
  \end{equation*}
with $\phi_0$ being chosen as $A A \phi_0 =  \E[\theta(Y)V]$. Such a vector always exists since it is a solution of the linear least square problem
\begin{equation}\label{eq:least_square_lasso}
  \min_{\phi \in \R^m} \ff(\phi).
\end{equation}
Minimizing $\JJ$ is therefore equivalent to minimizing $\LL$ defined by
\begin{equation*} 
  \LL(\phi) = \frac{1}{2} \left \| A \phi - b \right\|_2^2 + \lambda \| \phi \|_1,
\end{equation*}
setting $b = A \phi_0$. Remark at this point that since $\phi_0$ is the solution of \eqref{eq:least_square_lasso} 
\begin{equation*}
  0 \leq \ff(\phi_0) = \frac{1}{2} \E[V^2] - \frac{1}{2}\| A \phi_0 \|_2^2,
\end{equation*}
so that $\|b\|_2^2 \leq \E[V^2]$.

Finally, observe that $\lambda \| \phi_*\|_1 \leq \JJ(\phi_*) \leq \JJ(0) = \ff(0) = \frac{1}{2} \E[V^2]$. Thus, setting $\JJ_0 = \E[V^2]$ and $R = \E[V^2]/\lambda$ ensures that $\JJ(\phi) \leq \JJ_0$ implies that $\|\phi\|_1 \leq R$.

The Lemma \ref{lem:hoffman} can now be applied with these matrix $A$ and value $R$ leading to a bound
\begin{equation*}
  \tau \vartheta(\phi,\Sc)^2 \leq  \LL(\phi) - \LL_* = \JJ(\phi) - \JJ_*,
\end{equation*}
as long as $\| \phi \|_1 \leq R$ for $\tau$ with expression given in \eqref{eq:tau_lasso}.
Using furthermore the fact that $\|b\|_2^2 \leq \E[V^2]$, 
\begin{equation}\label{eq:lower_bound_tau_lasso}
\begin{split}
  \tau & \geq \left(4 H^2 \left[1 + \lambda R + \left(R \|A\| + \sqrt{\E[V^2]}\right)\left(4 R \|A\| + \sqrt{\E[V^2]}\right)\right]\right)^{-1} \\
  & = \left(4 H^2\left[1 + \lambda R + (R \|A\| + \sqrt{\lambda R})(4 R \|A\| + \sqrt{\lambda R})\right]\right)^{-1}.
\end{split}
\end{equation}
This lower-bound will be used in place of $\tau$. This choice is not optimal, since it was used that $\|b\|_2^2 \leq \E[V^2]$, but it will ease the proof of Assumption \ref{ass:lojasewicz_modulus}.

  \item[Assumption \ref{ass:convergence_bassin}]
  Similarly to the proof of Assumption \ref{ass:local_holder_error},  $\hJJ(\hphi) \leq \hJJ(0) = \frac{1}{2} \frac{1}{n} \sum_{i=1}^n V_i^2$ so that $\JJ(\hphi) \leq \JJ_0$ as soon as $\frac{1}{n} \sum_{i=1}^n V_i^2 \leq 2 \E[V^2]$.
  This event happens with high probability since using the Bernstein inequality of Theorem \ref{thm:bernstein_mcdiarmid},
  \begin{equation} \label{eq:concentration_Vi}
  \P\left( \left| \frac{1}{n} \sum_{i=1}^n V_i^2 - \E[V^2] \right| > t\right) \leq 2\exp\left( - \frac{nt^2}{2(e^2 \| V^2\|_{\psi_1}^2 + e \| V^2\|_{\psi_1}t)}\right),
  \end{equation}
  for all $t > 0$, because the random vector $V^2$ is sub-exponential with $\|V^2\|_{\psi_1} = \|V\|_{\psi_2}^2$. 
  Choosing $t = \E[V^2]$ therefore shows that the assumption is fulfilled with
  $\eta(n) = 2e^{-c_1 n}$ with 
\begin{equation*}
c_1 = \frac{\|V\|_2^4}{2(e^2 \| V\|_{\psi_2}^4 + e \| V\|_{\psi_2}^2\|V\|_2^2)}.
\end{equation*}

  \item[Assumption \ref{ass:lojasewicz_modulus}]
  The assumption will be proved checking that
  \begin{equation*}
    \widetilde{\tau} = \inf \left\{ \frac{\hJJ(\phi) - \hJJ_*}{\vartheta(\phi,\hSc)^2}, \quad \phi \in [\JJ \leq \JJ_0]\right\}
  \end{equation*} 
  is greater that $\frac{1}{2} \tau$ with high probability. 
  This will imply that Assumption \ref{ass:lojasewicz_modulus} holds since $\htau \geq \widetilde{\tau}$.

  In this proof, the matrix $A$ will also refer to $\E\left[ \theta(Y)\theta(Y)^T \right]^\frac{1}{2}$ and $\widehat{A}$ to its empirical counterpart.

  Note that $\widetilde{\tau}$ is defined in a similar fashion as \eqref{eq:lower_bound_tau_lasso} by
    \begin{equation} \label{eq:lower_bound_tau_lasso_empirical}
    \widetilde{\tau} = \left(4 \widehat{H}^2\left[1 + \JJ_0 + \left(\JJ_0 \|\widehat{A}\| \lambda^{-1} + \sqrt{\JJ_0}\right)\left(4 \JJ_0 \|\widehat{A}\| \lambda^{-1} + \sqrt{\JJ_0}\right)\right]\right)^{-1},
  \end{equation}
  so that the high-probability bound will be obtained by proving that $\widehat{H}^2 \leq 2^{1/2} H^2$ and $\| \widehat{A} \| \leq 2^{1/4} \| A \|$ with high probability.
  If all these inequalities hold, then  
  \begin{equation*}
    \begin{split}
    \widetilde{\tau} &\geq  \left(4 \sqrt{2} H^2\left[1 + \JJ_0 + \left(2^{1/4} \JJ_0 \|A\| \lambda^{-1} + \sqrt{\JJ_0}\right)\left(2^{1/4} 4 \sqrt{\JJ_0} \|A\| \lambda^{-1} + \sqrt{\JJ_0}\right)\right]\right)^{-1} \\
    &\geq  \left(4 \sqrt{2} H^2\left[\sqrt{2}(1 + \JJ_0) + \sqrt{2}\left(\JJ_0 \|A\| \lambda^{-1} + \sqrt{\JJ_0}\right)\left(4 \JJ_0 \|A\| \lambda^{-1} + \sqrt{\JJ_0}\right)\right]\right)^{-1} \\
    &\geq  \frac{1}{2} \tau.
    \end{split}
  \end{equation*}

  \begin{itemize}
    \item Proving that $\widehat{H}^2 \leq 2^{1/2} H^2$. Using definition of $H$, the Hoffman's constant of the matrix $C(A,\lambda)$ as defined in \eqref{eq:hauffman_matrix}, and under the assumption that it is positive, it can be proved that 
    \begin{equation} \label{eq:lower_bound_submatrices}
      \lambda_{|I|}(C(A,\lambda)_I^TC(A,\lambda)_I) = \sigma_{|I|}^2(C(A,\lambda)_I) \geq \frac{1}{H^2},
    \end{equation}
    for all subsets of rows $I \subseteq \{ 1,\ldots 2^m + m + 2\}$.
    In virtue of Weyl's inequality, for all $1 \leq i \leq |I|$ and all subsets $I$,
    \begin{equation*}
    \begin{split}
      r_{I,i} &= |\lambda_{i}(C(A,\lambda)_I^TC(A,\lambda)_I) - \lambda_{i}(C(\widehat{A},\lambda)_I^TC(\widehat{A},\lambda)_I)| \\
      & \leq \| C(A,\lambda)_I^TC(A,\lambda)_I - C(\widehat{A},\lambda)_I^TC(\widehat{A},\lambda)_I \| \\
      & \leq \| C(A,\lambda)^TC(A,\lambda) - C(\widehat{A},\lambda)^TC(\widehat{A},\lambda) \|,\\
    \end{split}
    \end{equation*}
by the interlacing property of eigenvalues. Further computations show that
\begin{equation*}
  \begin{aligned}
    r_{I,i} & \leq \left\| \begin{pmatrix} A A - \widehat{A} \widehat{A} & 0 \\ 0 & 0 \end{pmatrix}\right\|
    = \left\| \E[\theta(Y) \theta(Y)^T] - \frac{1}{n}\sum_{i=1}^n \theta(y_i)\theta(y_i)^T \right\|.
  \end{aligned}
\end{equation*}

In particular, under the event $\| A A - \widehat{A} \widehat{A} \| \leq \frac{1}{\epsilon H^2}$,
  \begin{equation} \label{eq:final_bound_submatrices}
    \begin{aligned}
    \lambda_{|I|}(C(\widehat{A},\lambda)_I^TC(\widehat{A},\lambda)_I) & \geq \lambda_{|I|} (C(A,\lambda)_I^TC(A,\lambda)_I) - \frac{1}{\epsilon H} \\
    & \geq \left(1 - \frac{1}{\epsilon}\right) \lambda_{|I|} (C(A,\lambda)_I^TC(A,\lambda)_I),
    \end{aligned}
  \end{equation}
  using \eqref{eq:lower_bound_submatrices}. This implies that under this event, the submatrices $C(\widehat{A},\lambda)_I$ share the row rank of their counterpart $C(A,\lambda)_I$. Therefore, the set of constraints in the maximization problem \eqref{eq:def_hoffman} are equal. The equation \eqref{eq:final_bound_submatrices} furthermore implies that $\widehat{H}^2 \leq \frac{\epsilon}{\epsilon-1} H^2$. Choosing $\epsilon = \frac{2^{1/2}}{2^{1/2}-1}$ gives the desired bound.

    \item Proving that $\| \widehat{A} \| \leq 2^{1/4} \| A \|$. This is equivalent to proving that $\| \widehat{A} \widehat{A} \| \leq 2^{1/2} \| A A \|$ which will also be proved using the Weyl's inequality as
    \begin{equation*}
      \begin{aligned}
      \left| \| A A \| - \| \widehat{A} \widehat{A} \|\right|  \leq \| AA - \widehat{A} \widehat{A}\|. 
      \end{aligned}
    \end{equation*}
    The expected bound happens under the event $\| AA  - \widehat{A} \widehat{A}\| \leq (2^{1/2}-1)\| A A\|$. 
  \end{itemize}
  
  Finally, these computations show that
  \begin{equation} \label{eq:kappa_lasso}
    \begin{aligned}
      \P\left( \widetilde{\tau} < \frac{1}{2} \tau \right) & \leq \P\left(\left\| \E\left[ \theta(Y) \theta(Y)^T \right] - \frac{1}{n} \sum_{i=1}^n \theta(y_i) \theta(y_i)^T  \right\| \geq c \right)\\
      & \leq 2 m \exp\left(-n \frac{c^2}{2(e^2 (m+1)^2\|\theta(Y)\|_{\psi_2}^4 + e(m+1)\|\theta(Y)\|_{\psi_2}^2 c)} \right),                                                                                                                  
    \end{aligned}
  \end{equation}
  with $c = \min\left(\frac{\sqrt{2}-1}{\sqrt{2}} \frac{1}{H^2}, (\sqrt{2}-1)\|AA\| \right)$ using Lemma \ref{lem:sample_covariance_subexp}.
  The assumption is therefore satisfied with $\kappa(n) = 2 m e^{-c_2n}$ with $c_2$ defined in \eqref{eq:kappa_lasso}.

  \item[Assumption \ref{ass:holder_l}]
  First, let us identify a candidate set $\Upsilon$ for which Assumption \ref{ass:holder_l} holds. The inclusion of $\Sc$ and of $\hSc$, with high probability, will be checked in the second part of the proof.

  Let $R = \E[V^2] / \lambda$ and $\Upsilon = \{ \phi \in \Msp \, | \, \| \phi \|_2 \leq R \}$. 
  For $\phi,\psi \in \Upsilon$ and $x = (y,v), x' = (y',v') \in \Ysp \times \Vsp$ such that $x \neq x'$, one has
  \begin{equation*}
    \begin{split}
      & l(\phi,x) - l(\psi,x') - l(\phi,x) + l(\psi,x') \\
      &= \langle \phi - \psi, \theta(y)\rangle (\langle \phi + \psi, \theta(y) \rangle -2v) - \langle \psi - \phi, \theta(y')\rangle (\langle \phi + \psi, \theta(y') \rangle -2v') \\
      &\leq \|\phi - \psi\|_2 \left( \|\theta(y)\|_2 \left| \langle \phi + \psi, \theta(y) \rangle -2v \right| + \|\theta(y')\|_2 \left| \langle \phi + \psi, \theta(y') \rangle -2v' \right| \right)\\
      &\leq \|\phi - \psi\|_2 \left( \|\theta(y)\|_2 (2 R \|\theta(y)\|_2 + 2|v|) + \|\theta(y')\|_2 (2 R \|\theta(y')\|_2 + 2|v'| ) \right),
    \end{split}
  \end{equation*}
  where we successively used the identity $(a^2-b^2) = (a-b)(a+b)$ and the Cauchy-Schwartz inequality.
  The function $a$ defined by
  \begin{equation} \label{eq:a_lasso}
    a(x,x') = \left(\|\theta(y)\|_2 (2 R \|\theta(y)\|_2 + 2|v|) + \|\theta(y')\|_2 (2 R \|\theta(y')\|_2 + 2|v'|)\right)\indic_{x \neq x'},
  \end{equation}
  for $x, x' \in \Xsp$, is a pseudometric. Moreover, since $X$ and $X'$ share the same distribution, 
  \begin{equation}
    \begin{aligned}
    \|a\|_{\psi_1} &\leq 4 \left\| \|\theta(Y)\|_2 (2 R \|\theta(Y)\|_2 + 2|V|) \right\|_{\psi_1} \\
                  & \leq 4R \| \|\theta(Y)\|_2^2 \|_{\psi_1} +  4 \| \|\theta(Y)\|_2 |V| \|_{\psi_1} \\
                  & \leq 4R \| \|\theta(Y)\|_2 \|_{\psi_2}^2 + 4 \| \|\theta(Y)\|_2 \|_{\psi_2} \| |V| \|_{\psi_2} \\
                  & \leq 4R m \| \theta(Y)\|_{\psi_2}^2 + 4 \sqrt{m}\| \theta(Y)\|_{\psi_2}\|V\|_{\psi_2}.
    \end{aligned}
  \end{equation}

  Therefore, the set $\Upsilon$, $\alpha = 1$ and the function $a$ are candidates for Assumption \ref{ass:holder_l} to hold.

It is known from the proof of Assumption \ref{ass:local_holder_error} that the norm of $\phi \in \Sc$ is $\|\phi\|_1 \leq R$. Using the fact that $\|\cdot\|_2 \leq \|\cdot\|_1$, we obtain $\Sc \subset \Msp$.
Similarly, for $\phi \in \hSc$, $\|\phi\|_2 \leq \frac{1}{2n} \sum_{i=1}^n V_i^2$ so that under the event $\frac{1}{n}\sum_{i=1}^n V_i \leq 2 \E[V^2]$, the norm $\|\phi\|_2 \leq R$.
This event holds with probability at least $1 - \iota(n)$ with $\iota(n) = \eta(n)$, see \eqref{eq:concentration_Vi} in the proof of Assumption \ref{ass:convergence_bassin}.
\end{description} 
\end{proof}

\begin{proof}[{Proof of Corollary \ref{cor:lasso}}]
The proof of the first part is a direct application of Theorem \ref{thm:concentration} with the constants of Lemma \ref{lem:assumption_lasso}. The second part requires a little more work.

The total expectation  can be split accordingly to \eqref{eq:total_law_expectation} where the first term can be bounded using Lemma \ref{lem:bound_conditional_expectation}. The second term is bounded as follows.
Due to the optimality of $\phi_*$ and $\hphi$ one has
\begin{equation*}
  \begin{aligned}
  \| \phi_*\|_2 & \leq \|\phi_*\|_1 \leq \frac{1}{2\lambda} \E[V^2], \\
  \| \hphi\|_2 & \leq \|\hphi\|_1 \leq \frac{1}{2\lambda} \frac{1}{n} \sum_{i=1}^n V_i^2, \\
  \end{aligned}
\end{equation*}
see the proof of Lemma \ref{lem:assumption_lasso}. Therefore, 
\begin{equation*}
  \begin{aligned}
    \E\left[ \vartheta(\hSc ; \Sc)^2 \indic_{\bX \notin \Ac}\right] & \leq \frac{1}{2}\E[V^2]^2\lambda^{-2}p_n + \frac{1}{2} \E[V^4 \indic_{\bX \notin \Ac}] \\ 
    & \leq \frac{1}{2}\E[V^2]^2\lambda^{-2}p_n + \frac{1}{2} \E[V^8]^{\frac{1}{2}} \sqrt{p_n} \\ 
    & \leq (2\|V\|^2_{\psi_2}+ 2^5\|V\|^4_{\psi_2}) \lambda^{-2} \sqrt{p_n}.
  \end{aligned}
\end{equation*}
\end{proof}

\subsection{Entropic-Wasserstein Barycenters} \label{sec:proof_entropic_wass}

\begin{proof}[Proof of Lemma \ref{lem:assumption_regbar}]
The proof consists in checking the assumptions one by one.
\begin{description}
  \item[Assumption \ref{ass:existstence_minimizer}] 
  Since $\Omega$ is bounded there exists $M >0$ such that $\int_{\Pc_2(\Omega)} \E_{X \sim \psi}[X^2] d\mu(\psi) \leq M^2$ for all measures $\mu \in \Pc_2(\Msp)$. Using Proposition 2.1 of \citet{carlier2021entropic} allows to conclude that $\phi_*$ and $\hphi$ exist and are unique.

  \item[Assumption \ref{ass:local_holder_error}]
  The function $W_2^2(\cdot,\psi)$ is convex from Proposition 7.17 of \citet{santambrogio2015optimal} for all $\psi \in \Pc_2(\Omega)$ for the linear structure on $\Pc_2(\Omega)$. This means that
  \begin{equation*}
    W_2^2(\phi_t,\psi) \leq (1-t) W_2^2(\phi_0,\psi) + t W_2^2(\phi_1,\psi),
  \end{equation*}
   for any pair of measures $\phi_0, \phi_1 \in \Pc_2(\Omega)$ and for any $t \in [0,1]$ where $\phi_t = (1-t)\phi_0 + t \phi_1$.
   Integrating with respect to $\mu$ or $\mu_n$ shows that the functions $\ff$ and $\hff$ are convex.

  On the other hand, some computations show that
  \begin{equation*}
    \RR(\phi) = \RR(\psi) + \langle \nabla \RR(\psi), \phi - \psi\rangle + \textrm{KL}(\phi|\psi),
  \end{equation*}
  with $\nabla \RR(\psi) = \log(\psi)$ and $\textrm{KL}$ being the Kullback-Liebler divergence between $\phi$ and $\psi$ defined by
  \begin{equation*}
    \textrm{KL}(\phi|\psi) = \int_{\Omega} \log\left(\frac{\phi(z)}{\psi(z)} \right) \phi(z) dz.
  \end{equation*}
  The Pinsker's inequality states that $\vartheta(\phi,\psi) \leq \sqrt{\frac{1}{2} \textrm{KL}(\phi|\psi)}$. This means that $\RR$ is $2$-strongly convex with respect to the total variation distance for the linear structure.

  Hence $\JJ$ and $\hJJ$ are both $2$-strongly convex with respect to the total variation distance so that
  \begin{equation*}
    2 \vartheta(\phi,\psi)^2 \leq \JJ(\phi) - \JJ_*,
  \end{equation*}
  for all $\phi \in \Pc_2^{ac}(\Omega)$. The similar inequality holds for $\hJJ$.

  \item[Assumption \ref{ass:convergence_bassin}] Since Assumption \eqref{ass:local_holder_error} is verified with $\JJ_0 = +\infty$, this assumption is verified with $\eta = 0$.

  \item[Assumption \ref{ass:lojasewicz_modulus}] Since $\hJJ$ is $2$-strongly convex with respect to the total variation distance, it holds that $\htau = \tau$ with probability $1$. This implies that the assumption is verified with $\kappa = 0$.

  \item[Assumption \ref{ass:holder_l}]
  Prior to proving this assumption, the Kantorovich duality is briefly introduced \citep[see][for more details]{villani2003topics}.
  Given two measures $\phi$ and $\psi$ and two maps $g \in L^1(\phi)$ and $h \in L^1(\psi)$ satisfying $g(y) + h(z) \geq \langle y, z\rangle$ for $\phi$-a.e.\ $y$ and $\psi$-a.e.\ $z$, then by definition of $W_2$
  \begin{equation*}
    \frac{1}{2} W_2^2(\phi,\psi) \geq \int_\Omega \left(\frac{\|\cdot\|^2}{2} - g\right) d\phi + \int_\Omega \left(\frac{\|\cdot\|^2}{2} - h \right)d\psi.
  \end{equation*}
  Due the Kantorovich duality, it is known that the equality holds for some pair of $(g,h) = (\zeta_{\phi \to \psi}, \zeta_{\phi \to \psi}^*)$ with $\zeta_{\phi \to \psi}$ being a lower semi-continuous proper convex function and where $\zeta_{\phi \to \psi}^* = \zeta_{\psi \to \phi}$ is the convex conjugate of $\zeta_{\phi \to \psi}$. The map $\zeta_{\phi \to \psi}$ is called the Kantorovich potential of the pair of measures $(\phi,\psi)$ and therefore satisfies
\begin{equation*}
    \frac{1}{2} W_2^2(\phi,\psi) = \int_\Omega \left(\frac{\|\cdot\|^2}{2} - \zeta_{\phi \to \psi}\right) d\phi + \int_\Omega \left(\frac{\|\cdot\|^2}{2} - \zeta_{\psi \to \phi} \right)d\psi.
  \end{equation*}

  Back to the proof of the assumption, let $\phi_0, \phi_1 \in \Msp$ and $\psi_0,\psi_1 \in \Xsp$.
  By Kantorovich duality \citep{villani2003topics},
  \begin{equation*}
    \begin{aligned}
      \frac{1}{2} W_2^2(\phi_0,\psi_0) &= \int_\Omega \left(\frac{\|\cdot\|^2}{2} - \zeta_{\psi_0 \to \phi_0} \right) d\psi_0 + \int_\Omega \left(\frac{\|\cdot\|^2}{2} - \zeta_{\phi_0 \to \psi_0} \right) d\phi_0, \\
      \frac{1}{2} W_2^2(\phi_1,\psi_0) & \geq \int_\Omega \left( \frac{\|\cdot\|^2}{2} - \zeta_{\psi_0 \to \phi_0} \right) d\psi_0 + \int_\Omega \left( \frac{\|\cdot\|^2}{2} - \zeta_{\phi_0 \to \psi_0} \right) d\phi_1,
    \end{aligned}
  \end{equation*}
  so that 
  \begin{equation*}
    \frac{1}{2} W_2^2(\phi_0,\psi_0) - \frac{1}{2} W_2^2(\phi_1,\psi_0) \leq \int_\Omega \left( \frac{\|\cdot\|^2}{2} - \zeta_{\phi_0 \to \psi_0} \right) d(\phi_0 - \phi_1).
  \end{equation*}
  Therefore,
  \begin{equation*}
    \begin{aligned}
    & \frac{1}{2}W_2^2(\phi_0,\psi_0) - \frac{1}{2}W_2^2(\phi_1,\psi_0) + \frac{1}{2} W_2^2(\phi_1,\psi_1) - \frac{1}{2} W_2^2(\phi_0,\psi_1) \\
    & \quad \quad \leq \int_\Omega (\zeta_{\phi_1 \to \psi_1} - \zeta_{\phi_0 \to \psi_0})  d(\phi_0 - \phi_1).
    \end{aligned}
  \end{equation*}

  Assuming without loss of generality that the point $0_{\R^d} \in \Omega$, the Kantorovich $\zeta$ are $2\diam(\Omega)$-Lipschitz so that 
\begin{equation*}
  \begin{split}
    \frac{1}{2} W_2^2(\phi_0,\psi_0) - \frac{1}{2} W_2^2(\phi_1,\psi_0) + \frac{1}{2} W_2^2(\phi_1,\psi_1) - \frac{1}{2} W_2^2(\phi_0,\psi_1)  & \leq 4 \diam(\Omega) W_1(\phi_0,\phi_1) \\
    & \leq 4 \diam(\Omega)^2 \vartheta(\phi_0,\phi_1).
    \end{split}
  \end{equation*}
  where the last inequality is obtained through the transport inequality $W_1(\phi_0,\phi_1) \leq \diam(\Omega) \vartheta(\phi_0, \phi_1)$ \citep[see][Theorem 4]{gibbs2002choosing}.
  Therefore, the assumption is verified with $\Upsilon = \Msp$, $a = 4 \diam(\Omega)^2$, $\|a\|_{\psi_1} = 4 \diam(\Omega)^2$ and $\iota = 0$.
\end{description}
\end{proof}

\bibliography{biblio}

\begin{thebibliography}{96}
\providecommand{\natexlab}[1]{#1}
\providecommand{\url}[1]{\texttt{#1}}
\expandafter\ifx\csname urlstyle\endcsname\relax
  \providecommand{\doi}[1]{doi: #1}\else
  \providecommand{\doi}{doi: \begingroup \urlstyle{rm}\Url}\fi

\bibitem[Agueh and Carlier(2011)]{agueh2011barycenters}
Martial Agueh and Guillaume Carlier.
\newblock Barycenters in the {W}asserstein space.
\newblock \emph{SIAM Journal on Mathematical Analysis}, 43\penalty0
  (2):\penalty0 904--924, 2011.

\bibitem[Ahidar-Coutrix et~al.(2020)Ahidar-Coutrix, Le~Gouic, and
  Paris]{ahidar2020convergence}
Adil Ahidar-Coutrix, Thibaut Le~Gouic, and Quentin Paris.
\newblock Convergence rates for empirical barycenters in metric spaces:
  curvature, convexity and extendable geodesics.
\newblock \emph{Probability Theory and Related Fields}, 177\penalty0
  (1):\penalty0 323--368, 2020.

\bibitem[Ambrosio and Gigli(2013)]{ambrosio2013user}
Luigi Ambrosio and Nicola Gigli.
\newblock A user’s guide to optimal transport.
\newblock In \emph{Modelling and Optimisation of Flows on Networks}, pages
  1--155. Springer, 2013.

\bibitem[Ambrosio et~al.(2008)Ambrosio, Gigli, and
  Savar{\'e}]{ambrosio2008gradient}
Luigi Ambrosio, Nicola Gigli, and Giuseppe Savar{\'e}.
\newblock \emph{Gradient Flows: in Metric Spaces and in the Space of
  Probability Measures}.
\newblock Springer, 2008.

\bibitem[Bac{\'a}k(2014)]{bacak2014computing}
Miroslav Bac{\'a}k.
\newblock Computing medians and means in {H}adamard spaces.
\newblock \emph{SIAM Journal on Optimization}, 24\penalty0 (3):\penalty0
  1542--1566, 2014.

\bibitem[Bach(2024)]{bach2021learning}
Francis Bach.
\newblock \emph{Learning Theory from First Principles}.
\newblock The MIT Press, 2024.

\bibitem[Barrio et~al.(2007)Barrio, Deheuvels, and van~de
  Geer]{barrioLecturesEmpiricalProcesses2007}
Eustasio~Del Barrio, Paul Deheuvels, and Sara van~de Geer.
\newblock \emph{Lectures on {{Empirical Processes}}: {{Theory}} and
  {{Statistical Applications}}}.
\newblock {European Mathematical Society}, 2007.

\bibitem[Bartlett and Mendelson(2002)]{bartlett2002rademacher}
Peter Bartlett and Shahar Mendelson.
\newblock Rademacher and {G}aussian complexities: Risk bounds and structural
  results.
\newblock \emph{Journal of Machine Learning Research}, 3\penalty0
  (Nov):\penalty0 463--482, 2002.

\bibitem[Bartlett and Mendelson(2006)]{bartlett2006empirical}
Peter Bartlett and Shahar Mendelson.
\newblock Empirical minimization.
\newblock \emph{Probability Theory and Related Fields}, 135\penalty0
  (3):\penalty0 311--334, 2006.

\bibitem[Bartlett et~al.(2005)Bartlett, Bousquet, and
  Mendelson]{bartlett2005local}
Peter Bartlett, Olivier Bousquet, and Shahar Mendelson.
\newblock Local {R}ademacher complexities.
\newblock \emph{The Annals of Statistics}, 33\penalty0 (4):\penalty0
  1487--1537, 2005.

\bibitem[Basu et~al.(2007)Basu, Pollack, and Coste-Roy]{basu2014algorithms}
Saugata Basu, Richard Pollack, and Marie-Françoise Coste-Roy.
\newblock \emph{Algorithms in Real Algebraic Geometry}.
\newblock Springer, 2007.

\bibitem[Beg et~al.(2005)Beg, Miller, Trouv{\'e}, and Younes]{beg2005computing}
Faisal Beg, Michael Miller, Alain Trouv{\'e}, and Laurent Younes.
\newblock Computing large deformation metric mappings via geodesic flows of
  diffeomorphisms.
\newblock \emph{International Journal of Computer Vision}, 61\penalty0
  (2):\penalty0 139--157, 2005.

\bibitem[Berg and Nikolaev(2008)]{berg2008quasilinearization}
Ira Berg and Igor Nikolaev.
\newblock Quasilinearization and curvature of {A}leksandrov spaces.
\newblock \emph{Geometriae Dedicata}, 133\penalty0 (1):\penalty0 195--218,
  2008.

\bibitem[Bhattacharya and Patrangenaru(2003)]{bhattacharya2003large}
Rabi Bhattacharya and Vic Patrangenaru.
\newblock Large sample theory of intrinsic and extrinsic sample means on
  manifolds.
\newblock \emph{The Annals of Statistics}, 31\penalty0 (1):\penalty0 1--29,
  2003.

\bibitem[Bierstone and Milman(1988)]{bierstone1988semianalytic}
Edward Bierstone and Pierre Milman.
\newblock Semianalytic and subanalytic sets.
\newblock \emph{Publications Math{\'e}matiques de l'IH{\'E}S}, 67:\penalty0
  5--42, 1988.

\bibitem[Bigot et~al.(2019{\natexlab{a}})Bigot, Cazelles, and
  Papadakis]{bigot2019data}
J{\'e}r{\'e}mie Bigot, Elsa Cazelles, and Nicolas Papadakis.
\newblock Data-driven regularization of {W}asserstein barycenters with an
  application to multivariate density registration.
\newblock \emph{Information and Inference: A Journal of the IMA}, 8\penalty0
  (4):\penalty0 719--755, 2019{\natexlab{a}}.

\bibitem[Bigot et~al.(2019{\natexlab{b}})Bigot, Cazelles, and
  Papadakis]{bigot2019penalization}
J{\'e}r{\'e}mie Bigot, Elsa Cazelles, and Nicolas Papadakis.
\newblock Penalization of barycenters in the {W}asserstein space.
\newblock \emph{SIAM Journal on Mathematical Analysis}, 51\penalty0
  (3):\penalty0 2261--2285, 2019{\natexlab{b}}.

\bibitem[Bochnak et~al.(2013)Bochnak, Coste, and Roy]{bochnak2013real}
Jacek Bochnak, Michel Coste, and Marie-Fran{\c{c}}oise Roy.
\newblock \emph{Real Algebraic Geometry}, volume~36.
\newblock Springer, 2013.

\bibitem[Bolte et~al.(2017)Bolte, Nguyen, Peypouquet, and
  Suter]{bolte2017error}
J{\'e}r{\^o}me Bolte, Trong~Phong Nguyen, Juan Peypouquet, and Bruce Suter.
\newblock From error bounds to the complexity of first-order descent methods
  for convex functions.
\newblock \emph{Mathematical Programming}, 165\penalty0 (2):\penalty0 471--507,
  2017.

\bibitem[Boucheron et~al.(2013)Boucheron, Lugosi, and
  Massart]{boucheron2013concentration}
St{\'e}phane Boucheron, G{\'a}bor Lugosi, and Pascal Massart.
\newblock \emph{Concentration Inequalities: A Nonasymptotic Theory of
  Independence}.
\newblock Oxford University Press, 2013.

\bibitem[Bousquet and Elisseeff(2002)]{bousquet2002stability}
Olivier Bousquet and Andr{\'e} Elisseeff.
\newblock Stability and generalization.
\newblock \emph{Journal of Machine Learning Research}, 2:\penalty0 499--526,
  2002.

\bibitem[Bousquet et~al.(2003)Bousquet, Boucheron, and
  Lugosi]{bousquet2003introduction}
Olivier Bousquet, St{\'e}phane Boucheron, and G{\'a}bor Lugosi.
\newblock Introduction to statistical learning theory.
\newblock In \emph{Advanced Lectures on Machine Learning}, pages 169--207.
  Springer, 2003.

\bibitem[Bousquet et~al.(2020)Bousquet, Klochkov, and
  Zhivotovskiy]{bousquet2020sharper}
Olivier Bousquet, Yegor Klochkov, and Nikita Zhivotovskiy.
\newblock Sharper bounds for uniformly stable algorithms.
\newblock In \emph{Conference on Learning Theory}, pages 610--626, 2020.

\bibitem[Brunel and Serres(2024)]{brunel2023concentration}
Victor-Emmanuel Brunel and Jordan Serres.
\newblock Concentration of empirical barycenters in metric spaces.
\newblock In \emph{International Conference on Algorithmic Learning Theory},
  pages 337--361, 2024.

\bibitem[Candes et~al.(2006)Candes, Romberg, and Tao]{candes2006stable}
Emmanuel Candes, Justin Romberg, and Terence Tao.
\newblock Stable signal recovery from incomplete and inaccurate measurements.
\newblock \emph{Communications on Pure and Applied Mathematics}, 59\penalty0
  (8):\penalty0 1207--1223, 2006.

\bibitem[Carlier et~al.(2021)Carlier, Eichinger, and
  Kroshnin]{carlier2021entropic}
Guillaume Carlier, Katharina Eichinger, and Alexey Kroshnin.
\newblock Entropic-{W}asserstein barycenters: {PDE} characterization,
  regularity, and {CLT}.
\newblock \emph{SIAM Journal on Mathematical Analysis}, 53\penalty0
  (5):\penalty0 5880--5914, 2021.

\bibitem[Chatterjee(2014)]{chatterjee2014new}
Sourav Chatterjee.
\newblock A new perspective on least squares under convex constraint.
\newblock \emph{The Annals of Statistics}, 42\penalty0 (6):\penalty0
  2340--2381, 2014.

\bibitem[Combes(2015)]{combes2015extension}
Richard Combes.
\newblock An extension of {M}c{D}iarmid's inequality.
\newblock \emph{arXiv preprint arXiv:1511.05240}, 2015.

\bibitem[Devroye et~al.(1996)Devroye, Gy{\"o}rfi, and
  Lugosi]{devroye2013probabilistic}
Luc Devroye, L{\'a}szl{\'o} Gy{\"o}rfi, and G{\'a}bor Lugosi.
\newblock \emph{A Probabilistic Theory of Pattern Recognition}.
\newblock Springer, 1996.

\bibitem[Donoho(2006)]{donoho2006most}
David Donoho.
\newblock For most large underdetermined systems of linear equations the
  minimal $\ell_1$-norm solution is also the sparsest solution.
\newblock \emph{Communications on Pure and Applied Mathematics}, 59\penalty0
  (6):\penalty0 797--829, 2006.

\bibitem[Eltzner and Huckemann(2019)]{eltzner2019smeary}
Benjamin Eltzner and Stephan Huckemann.
\newblock A smeary central limit theorem for manifolds with application to
  high-dimensional spheres.
\newblock \emph{The Annals of Statistics}, 47\penalty0 (6):\penalty0
  3360--3381, 2019.

\bibitem[Feldman and Vondrak(2018)]{feldman2018generalization}
Vitaly Feldman and Jan Vondrak.
\newblock Generalization bounds for uniformly stable algorithms.
\newblock In \emph{Advances in Neural Information Processing Systems},
  volume~31, 2018.

\bibitem[Feldman and Vondrak(2019)]{feldman2019high}
Vitaly Feldman and Jan Vondrak.
\newblock High probability generalization bounds for uniformly stable
  algorithms with nearly optimal rate.
\newblock In \emph{Conference on Learning Theory}, pages 1270--1279, 2019.

\bibitem[Fortuin et~al.(1971)Fortuin, Kasteleyn, and
  Ginibre]{fortuin1971correlation}
Cees Fortuin, Pieter Kasteleyn, and Jean Ginibre.
\newblock Correlation inequalities on some partially ordered sets.
\newblock \emph{Communications in Mathematical Physics}, 22\penalty0
  (2):\penalty0 89--103, 1971.

\bibitem[Fr{\'e}chet(1948)]{frechet1948elements}
Maurice Fr{\'e}chet.
\newblock Les {\'e}l{\'e}ments al{\'e}atoires de nature quelconque dans un
  espace distanci{\'e}.
\newblock \emph{Annales de l'Institut Henri Poincar{\'e}}, 10:\penalty0
  215--310, 1948.

\bibitem[Gibbs and Su(2002)]{gibbs2002choosing}
Alison Gibbs and Francis Su.
\newblock On choosing and bounding probability metrics.
\newblock \emph{International Statistical Review}, 70\penalty0 (3):\penalty0
  419--435, 2002.

\bibitem[G{\"u}ler(2010)]{guler2010foundations}
Osman G{\"u}ler.
\newblock \emph{Foundations of Optimization}, volume 258.
\newblock Springer, 2010.

\bibitem[Hess(1996)]{hess1996epi}
Christian Hess.
\newblock Epi-convergence of sequences of normal integrands and strong
  consistency of the maximum likelihood estimator.
\newblock \emph{The Annals of Statistics}, 24\penalty0 (3):\penalty0
  1298--1315, 1996.

\bibitem[Hiriart-Urruty(1980)]{hiriart1980extension}
Jean-Baptiste Hiriart-Urruty.
\newblock Extension of {L}ipschitz functions.
\newblock \emph{Journal of Mathematical Analysis and Applications}, 77\penalty0
  (2):\penalty0 539--554, 1980.

\bibitem[Hoffman(1952)]{hoffman2003approximate}
Alan Hoffman.
\newblock On approximate solutions of systems of linear inequalities.
\newblock \emph{Journal of Research of the National Bureau of Standards},
  49\penalty0 (4):\penalty0 263--265, 1952.

\bibitem[Hotelling(1933)]{hotelling1933analysis}
Harold Hotelling.
\newblock Analysis of a complex of statistical variables into principal
  components.
\newblock \emph{Journal of Educational Psychology}, 24\penalty0 (6):\penalty0
  417, 1933.

\bibitem[Jin et~al.(2019)Jin, Netrapalli, Ge, Kakade, and Jordan]{jin2019short}
Chi Jin, Praneeth Netrapalli, Rong Ge, Sham Kakade, and Michael Jordan.
\newblock A short note on concentration inequalities for random vectors with
  subgaussian norm.
\newblock arXiv preprint arXiv:1902.03736, 2019.

\bibitem[Kearns and Ron(1997)]{kearns1997algorithmic}
Michael Kearns and Dana Ron.
\newblock Algorithmic stability and sanity-check bounds for leave-one-out
  cross-validation.
\newblock In \emph{Conference on Computational Learning Theory}, pages
  152--162, 1997.

\bibitem[Kirszbraun(1934)]{kirszbraun1934zusammenziehende}
Mojzesz Kirszbraun.
\newblock {\"U}ber die zusammenziehende und lipschitzsche transformationen.
\newblock \emph{Fundamenta Mathematicae}, 22\penalty0 (1):\penalty0 77--108,
  1934.

\bibitem[Klochkov and Zhivotovskiy(2021)]{klochkov2021stability}
Yegor Klochkov and Nikita Zhivotovskiy.
\newblock Stability and deviation optimal risk bounds with convergence rate
  ${O}(1/n)$.
\newblock In \emph{Advances in Neural Information Processing Systems},
  volume~34, pages 5065--5076, 2021.

\bibitem[Knight and Fu(2000)]{knight2000asymptotics}
Keith Knight and Wenjiang Fu.
\newblock Asymptotics for {L}asso-type estimators.
\newblock \emph{The Annals of Statistics}, pages 1356--1378, 2000.

\bibitem[Koltchinskii(2006)]{koltchinskii2006local}
Vladimir Koltchinskii.
\newblock Local {R}ademacher complexities and oracle inequalities in risk
  minimization.
\newblock \emph{The Annals of Statistics}, 34\penalty0 (6):\penalty0
  2593--2656, 2006.

\bibitem[Koltchinskii(2009)]{koltchinskii2009sparsity}
Vladimir Koltchinskii.
\newblock Sparsity in penalized empirical risk minimization.
\newblock \emph{Annales de l'Institut Henri Poincar{\'e}, Probabilit{\'e}s et
  Statistiques}, 45\penalty0 (1):\penalty0 7--57, 2009.

\bibitem[Kontorovich(2014)]{kontorovich2014concentration}
Aryeh Kontorovich.
\newblock Concentration in unbounded metric spaces and algorithmic stability.
\newblock In \emph{International Conference on Machine Learning}, pages 28--36,
  2014.

\bibitem[Kroshnin(2018)]{kroshnin2017fr}
Alexey Kroshnin.
\newblock Fr\'echet barycenters in the {M}onge-{K}antorovich spaces.
\newblock \emph{Journal of Convex Analysis}, 15:\penalty0 1371--1395, 2018.

\bibitem[Kutin(2002)]{kutin2002extensions}
Samuel Kutin.
\newblock Extensions to {M}c{D}iarmid’s inequality when differences are
  bounded with high probability.
\newblock Technical report, Dept. Comput. Sci., Univ. Chicago, Chicago, IL,
  USA, 2002.

\bibitem[Kutin and Niyogi(2002)]{kutin2012almost}
Samuel Kutin and Partha Niyogi.
\newblock Almost-everywhere algorithmic stability and generalization error.
\newblock In \emph{Conference on Uncertainty in Artificial Intelligence}, pages
  275--282, 2002.

\bibitem[Le~Gouic and Loubes(2017)]{le2017existence}
Thibaut Le~Gouic and Jean-Michel Loubes.
\newblock Existence and consistency of {W}asserstein barycenters.
\newblock \emph{Probability Theory and Related Fields}, 168\penalty0
  (3):\penalty0 901--917, 2017.

\bibitem[Le~Gouic et~al.(2022)Le~Gouic, Paris, Rigollet, and
  Stromme]{le2022fast}
Thibaut Le~Gouic, Quentin Paris, Philippe Rigollet, and Austin Stromme.
\newblock Fast convergence of empirical barycenters in {A}lexandrov spaces and
  the {W}asserstein space.
\newblock \emph{Journal of the European Mathematical Society}, 2022.

\bibitem[Lei(2020)]{lei2020convergence}
Jing Lei.
\newblock Convergence and concentration of empirical measures under
  {W}asserstein distance in unbounded functional spaces.
\newblock \emph{Bernoulli}, 26\penalty0 (1):\penalty0 767--798, 2020.

\bibitem[Li(2013)]{li2013global}
Guoyin Li.
\newblock Global error bounds for piecewise convex polynomials.
\newblock \emph{Mathematical Programming}, 137\penalty0 (1):\penalty0 37--64,
  2013.

\bibitem[{\L}ojasiewicz(1993)]{lojasiewicz1993geometrie}
Stanislas {\L}ojasiewicz.
\newblock Sur la g{\'e}om{\'e}trie semi-et sous-analytique.
\newblock \emph{Annales de l'Institut Fourier}, 43\penalty0 (5):\penalty0
  1575--1595, 1993.

\bibitem[Maurer and Pontil(2021)]{maurer2021concentration}
Andreas Maurer and Massimiliano Pontil.
\newblock Concentration inequalities under sub-{G}aussian and sub-exponential
  conditions.
\newblock \emph{Advances in Neural Information Processing Systems}, 34, 2021.

\bibitem[McDiarmid(1998)]{mcdiarmid1998concentration}
Colin McDiarmid.
\newblock Concentration.
\newblock In \emph{Probabilistic Methods for Algorithmic Discrete Mathematics},
  pages 195--248. Springer, 1998.

\bibitem[Mendelson(2008)]{mendelson2008lower}
Shahar Mendelson.
\newblock Lower bounds for the empirical minimization algorithm.
\newblock \emph{IEEE Transactions on Information Theory}, 54\penalty0
  (8):\penalty0 3797--3803, 2008.

\bibitem[Mendelson(2014)]{mendelson2014learning}
Shahar Mendelson.
\newblock Learning without concentration.
\newblock In \emph{Conference on Learning Theory}, pages 25--39, 2014.

\bibitem[Mendelson(2018)]{mendelson2018learning}
Shahar Mendelson.
\newblock Learning without concentration for general loss functions.
\newblock \emph{Probability Theory and Related Fields}, 171\penalty0
  (1):\penalty0 459--502, 2018.

\bibitem[Pearson(1901)]{pearson1901liii}
Karl Pearson.
\newblock On lines and planes of closest fit to systems of points in space.
\newblock \emph{The London, Edinburgh, and Dublin Philosophical Magazine and
  Journal of Science}, 2\penalty0 (11):\penalty0 559--572, 1901.

\bibitem[Pelletier(2005)]{pelletier2005informative}
Bruno Pelletier.
\newblock Informative barycentres in statistics.
\newblock \emph{Annals of the Institute of Statistical Mathematics},
  57\penalty0 (4):\penalty0 767--780, 2005.

\bibitem[Pena et~al.(2021)Pena, Vera, and Zuluaga]{pena2021new}
Javier Pena, Juan Vera, and Luis Zuluaga.
\newblock New characterizations of {H}offman constants for systems of linear
  constraints.
\newblock \emph{Mathematical Programming}, 187:\penalty0 79--109, 2021.

\bibitem[Pennec(2006)]{pennec2006intrinsic}
Xavier Pennec.
\newblock Intrinsic statistics on {R}iemannian manifolds: Basic tools for
  geometric measurements.
\newblock \emph{Journal of Mathematical Imaging and Vision}, 25\penalty0
  (1):\penalty0 127--154, 2006.

\bibitem[Peyr{\'e} and Cuturi(2019)]{peyre2019computational}
Gabriel Peyr{\'e} and Marco Cuturi.
\newblock Computational optimal transport: With applications to data science.
\newblock \emph{Foundations and Trends{\textregistered} in Machine Learning},
  11\penalty0 (5-6):\penalty0 355--607, 2019.

\bibitem[Pinkus(1985)]{pinkusNwidthsApproximationTheory1985}
Allan Pinkus.
\newblock \emph{N-widths in Approximation Theory}.
\newblock Springer, 1985.

\bibitem[Rakhlin et~al.(2005)Rakhlin, Mukherjee, and
  Poggio]{rakhlin2005stability}
Alexander Rakhlin, Sayan Mukherjee, and Tomaso Poggio.
\newblock Stability results in learning theory.
\newblock \emph{Analysis and Applications}, 3\penalty0 (04):\penalty0 397--417,
  2005.

\bibitem[Raskutti et~al.(2010)Raskutti, Wainwright, and
  Yu]{raskutti2010restricted}
Garvesh Raskutti, Martin Wainwright, and Bin Yu.
\newblock Restricted eigenvalue properties for correlated {G}aussian designs.
\newblock \emph{Journal of Machine Learning Research}, 11:\penalty0 2241--2259,
  2010.

\bibitem[Rogers and Wagner(1978)]{rogers1978finite}
William Rogers and Terry Wagner.
\newblock A finite sample distribution-free performance bound for local
  discrimination rules.
\newblock \emph{The Annals of Statistics}, pages 506--514, 1978.

\bibitem[Rudelson and Zhou(2012)]{rudelson2012reconstruction}
Mark Rudelson and Shuheng Zhou.
\newblock Reconstruction from anisotropic random measurements.
\newblock In \emph{Conference on Learning Theory}, pages 10--1, 2012.

\bibitem[Santambrogio(2015)]{santambrogio2015optimal}
Filippo Santambrogio.
\newblock \emph{Optimal Transport for Applied Mathematicians}.
\newblock Birk{\"a}user, 2015.

\bibitem[Sch{\"o}tz(2019)]{schotz2019convergence}
Christof Sch{\"o}tz.
\newblock Convergence rates for the generalized {F}r{\'e}chet mean via the
  quadruple inequality.
\newblock \emph{Electronic Journal of Statistics}, 13\penalty0 (2):\penalty0
  4280--4345, 2019.

\bibitem[Sch{\"o}tz(2023)]{schotz2023variance}
Christof Sch{\"o}tz.
\newblock Variance inequalities for transformed {F}r\'echet means in {H}adamard
  spaces.
\newblock \emph{arXiv preprint arXiv:2310.13668}, 2023.

\bibitem[Shalev-Shwartz and Ben-David(2014)]{shalev2014understanding}
Shai Shalev-Shwartz and Shai Ben-David.
\newblock \emph{Understanding Machine Learning: From Theory to Algorithms}.
\newblock Cambridge University Press, 2014.

\bibitem[Shalev-Shwartz et~al.(2009)Shalev-Shwartz, Shamir, Srebro, and
  Sridharan]{shalev2009stochastic}
Shai Shalev-Shwartz, Ohad Shamir, Nathan Srebro, and Karthik Sridharan.
\newblock Stochastic convex optimization.
\newblock In \emph{Conference on Learning Theory}, 2009.

\bibitem[Shalev-Shwartz et~al.(2010)Shalev-Shwartz, Shamir, Srebro, and
  Sridharan]{shalev2010learnability}
Shai Shalev-Shwartz, Ohad Shamir, Nathan Srebro, and Karthik Sridharan.
\newblock Learnability, stability and uniform convergence.
\newblock \emph{Journal of Machine Learning Research}, 11:\penalty0 2635--2670,
  2010.

\bibitem[Stromme(2020)]{stromme2020wasserstein}
Austin Stromme.
\newblock \emph{Wasserstein Barycenters: Statistics and Optimization}.
\newblock PhD thesis, Massachusetts Institute of Technology, 2020.

\bibitem[Sturm(2003)]{sturm2003probability}
Karl-Theodor Sturm.
\newblock Probability measures on metric spaces of nonpositive curvature.
\newblock In \emph{Heat Kernels and Analysis on Manifolds, Graphs, and Metric
  Spaces}, volume 338, pages 357--390. American Mathematical Society, 2003.

\bibitem[Talagrand(2014)]{talagrand2014upper}
Michel Talagrand.
\newblock \emph{Upper and Lower bounds for Stochastic Processes}, volume~60.
\newblock Springer, 2014.

\bibitem[Tibshirani(1996)]{tibshirani1996regression}
Robert Tibshirani.
\newblock Regression shrinkage and selection via the {L}asso.
\newblock \emph{Journal of the Royal Statistical Society: Series B
  (Methodological)}, 58\penalty0 (1):\penalty0 267--288, 1996.

\bibitem[Tropp(2012)]{tropp2012user}
Joel Tropp.
\newblock User-friendly tail bounds for sums of random matrices.
\newblock \emph{Foundations of Computational Mathematics}, 12:\penalty0
  389--434, 2012.

\bibitem[Tsybakov(2009)]{tsybakovIntroductionNonparametricEstimation2009}
Alexandre Tsybakov.
\newblock \emph{Introduction to Nonparametric Estimation}.
\newblock Springer, 2009.

\bibitem[Valiant(1984)]{valiant1984theory}
Leslie Valiant.
\newblock A theory of the learnable.
\newblock \emph{Communications of the ACM}, 27\penalty0 (11):\penalty0
  1134--1142, 1984.

\bibitem[van~de Geer and Wainwright(2017)]{van2017concentration}
Sara van~de Geer and Martin Wainwright.
\newblock On concentration for (regularized) empirical risk minimization.
\newblock \emph{Sankhya A}, 79:\penalty0 159--200, 2017.

\bibitem[Van Der~Vaart and Wellner(1997)]{van1997weak}
Aad Van Der~Vaart and Jon Wellner.
\newblock \emph{Weak Convergence and Empirical Processes: with Applications to
  Statistics}.
\newblock Springer, 1997.

\bibitem[Vapnik(1991)]{vapnik1991principles}
Vladimir Vapnik.
\newblock Principles of risk minimization for learning theory.
\newblock In \emph{Advances in Neural Information Processing Systems},
  volume~4, 1991.

\bibitem[Vapnik and Chervonenkis(1971)]{vapnik1971uniform}
Vladimir Vapnik and Alexey Chervonenkis.
\newblock On the uniform convergence of relative frequencies of events to their
  probabilities.
\newblock \emph{Theory of Probability and its Applications}, 16\penalty0
  (2):\penalty0 264, 1971.

\bibitem[Vapnik and Chervonenkis(1974)]{vapnik1974theory}
Vladimir Vapnik and Alexey Chervonenkis.
\newblock \emph{Theory of Pattern Recognition}.
\newblock Nauka, 1974.

\bibitem[Vershynin(2018)]{vershynin2018high}
Roman Vershynin.
\newblock \emph{High-Dimensional Probability: An Introduction with Applications
  in Data Science}, volume~47.
\newblock Cambridge University Press, 2018.

\bibitem[Villani(2003)]{villani2003topics}
C{\'e}dric Villani.
\newblock \emph{Topics in Optimal Transportation}.
\newblock Number~58 in Grad. Stud. Math. American Mathematical Society, 2003.

\bibitem[Wainwright(2019)]{wainwright2019high}
Martin Wainwright.
\newblock \emph{High-Dimensional Dtatistics: A Non-Asymptotic Viewpoint},
  volume~48.
\newblock Cambridge University Press, 2019.

\bibitem[Warnke(2016)]{warnke2016method}
Lutz Warnke.
\newblock On the method of typical bounded differences.
\newblock \emph{Combinatorics, Probability and Computing}, 25\penalty0
  (2):\penalty0 269--299, 2016.

\bibitem[Yu et~al.(2015)Yu, Wang, and Samworth]{yu2015useful}
Yi~Yu, Tengyao Wang, and Richard~J Samworth.
\newblock A useful variant of the {D}avis--{K}ahan theorem for statisticians.
\newblock \emph{Biometrika}, 102\penalty0 (2):\penalty0 315--323, 2015.

\bibitem[Yun and Park(2023)]{yun2023exponential}
Ho~Yun and Byeong Park.
\newblock Exponential concentration for geometric-median-of-means in
  non-positive curvature spaces.
\newblock \emph{Bernoulli}, 29\penalty0 (4):\penalty0 2927--2960, 2023.

\end{thebibliography}

\end{document}